\newcommand\field[1]{\mathbb{#1}}
\newcommand\FF{\field{F}}
\newcommand\NN{\field{N}}
\newcommand\TT{\field{T}}
\newcommand\ZZ{\field{Z}}
\newcommand\Bb{B}
\newcommand\Kk{\mathcal K}
\newcommand\Oo{\mathcal O}
\newcommand\Tt{\mathcal T}
\newcommand\Xx{\mathcal X}
\newcommand\Yy{\mathcal Y}
\renewcommand\ker{\operatorname{ker}}
\newcommand\Obj{\operatorname{Obj}}
\newcommand\Ext{\operatorname{Ext}}
\newcommand\id{\operatorname{id}}
\newcommand\cod{\operatorname{cod}}
\newcommand\dom{\operatorname{dom}}
\newcommand\lsp{\operatorname{span}}
\newcommand\clsp{\operatorname{\overline{span\!}\,\,}}
\newcommand\MCE{\operatorname{MCE}}
\newcommand\Hom{\operatorname{Hom}}
\newcommand\range{\operatorname{range}}
\newcommand\bal[1]{{#1} *_{d,s} {#1}}
\newcommand\Sfilters{T}
\newcommand\Smin{S}
\newcommand{\Suniv}{s}
\newcommand{\tT}{\widetilde{T}}
\newcommand{\BalAlg}[1]{\mathcal{B}_{#1}}
\newcommand{\Bmin}[1]{\mathcal{B}_{#1}^{\min{}}}
\newcommand\Csmin[1]{C^*_{\min{}}(#1)}
\theoremstyle{plain}
\newtheorem{theorem}{Theorem}[section]
\newtheorem*{theorem*}{Theorem}
\newtheorem*{prop*}{Proposition}
\newtheorem{cor}[theorem]{Corollary}
\newtheorem{lemma}[theorem]{Lemma}
\newtheorem{prop}[theorem]{Proposition}
\theoremstyle{remark}
\newtheorem{rmk}[theorem]{Remark}
\newtheorem{example}[theorem]{Example}
\theoremstyle{definition}
\newtheorem{dfn}[theorem]{Definition}
\newtheorem{notation}[theorem]{Notation}
\newtheorem*{notation*}{Notation}
\numberwithin{equation}{section}
\title
[$P$-graph algebras]
{Co-universal $C^*$-algebras associated to  generalised graphs}
\author{Nathan Brownlowe}
\address{N. Brownlowe\\ School of Mathematics and Applied Statistics\\ Austin Keane Building (15)\\ University of Wollongong\\ NSW 2522\\ AUSTRALIA}
\email{nathanb@uow.edu.au}
\author{Aidan Sims}
\address{A. Sims\\ School of Mathematics and Applied Statistics\\ Austin Keane Building (15)\\ University of Wollongong\\ NSW 2522\\ AUSTRALIA}
\email{asims@uow.edu.au}
\author{Sean T. Vittadello}
\email{sean.vittadello@gmail.com}
\keywords{Higher rank graph, co-universal algebra, graph algebra, Cuntz-
Krieger algebra.}
\date{\today}
\subjclass{Primary 46L05}
\thanks{This research was supported by the Australian Research Council.}
\begin{document}

\begin{abstract}
We introduce $P$-graphs, which are generalisations of directed
graphs in which paths have a degree in a semigroup $P$ rather
than a length in $\NN$. We focus on semigroups $P$ arising as
part of a quasi-lattice ordered group $(G,P)$ in the sense of
Nica, and on $P$-graphs which are finitely aligned in the sense
of Raeburn and Sims. We show that each finitely aligned
$P$-graph admits a $C^*$-algebra $\Csmin{\Lambda}$ which is
co-universal for partial-isometric representations of $\Lambda$
which admit a coaction of $G$ compatible with the $P$-valued
length function. We also characterise when a homomorphism
induced by the co-universal property is injective. Our results
combined with those of Spielberg show that every Kirchberg
algebra is Morita equivalent $\Csmin{\Lambda}$ for some $(\NN^2
\ast \NN)$-graph $\Lambda$.
\end{abstract}

\maketitle

\section{Introduction}

The Cuntz-Krieger algebras $\Oo_A$ introduced in \cite{CK1980}
provide an extensive array of purely infinite simple
$C^*$-algebras. The study of these algebras has led in
particular to the celebrated Kirchberg-Phillips classification
theorem which says, roughly, that every purely infinite simple
$C^*$-algebra (these are now called Kirchberg algebras) is
determined up to isomorphism by its $K$-theory
\cite{Phillips2000}, and that for every pair of abelian groups
$G,H$, there exists a purely infinite simple $C^*$-algebra $A$
with $K_*(A) = (G,H)$.

However, not every purely infinite simple $C^*$-algebra is a
Cuntz-Krieger algebra: the results of \cite{CK1980a} imply that
the $K$-groups of a Cuntz-Krieger algebra are finitely
generated and have equal rank, and that the $K_1$-group is free
abelian. Graph $C^*$-algebras \cite{KPR1998, KPRR1997} and
their higher-rank analogues \cite{KP2000} were developed in
part to seek Cuntz-Krieger-like models for the remaining purely
infinite simple $C^*$-algebras.

This program has met with mixed success. On the one hand, graph
algebras themselves do not suffice to describe all Kirchberg
algebras: the results of \cite{RS2004} and
\cite{Szymanski2002a} imply that a purely infinite simple
$C^*$-algebra can be realised up to Morita equivalence as a
graph algebra if and only if its $K_1$-group is free abelian.
And the question of whether every purely infinite simple
$C^*$-algebra can be realised as a $k$-graph $C^*$-algebra
remains open. On the other hand, since higher-rank graph $C^*$-algebras include, in particular,
all finite tensor products of graph $C^*$-algebras
\cite[Corollary~3.5(iv)]{KP2000}, for every pair of abelian
groups $G,H$, there exist $2$-graphs $\Lambda_G$ and
$\Lambda_H$ such that each of $C^*(\Lambda_G)$ and
$C^*(\Lambda_H)$ is simple and purely infinite, and
$K_*(\Lambda_G) = (G,\{0\})$ while $K_*(\Lambda_H) = (\{0\},
H)$. In \cite{Spielberg2007, Spielberg2007b} Spielberg
developed a construction which incorporates $\Lambda_G$ and
$\Lambda_H$ in a kind of hybrid graph $\Lambda$ in such a way
that the $C^*$-algebra associated to $\Lambda$ is itself simple
and purely infinite and has $K$-theory $(G,H)$. So every purely
infinite simple $C^*$-algebra can be realised up to stable
isomorphism as the $C^*$-algebra of one of Spielberg's hybrid
graphs, and so can, in a sense, be built from $k$-graph algebras.

A particularly powerful source of intuition when dealing with
graph $C^*$-algebras and $k$-graph $C^*$-algebras is that each
$k$-graph $C^*$-algebra can be realised up to Morita
equivalence as a crossed product of an AF algebra by an action
of $\ZZ^k$ \cite{KP2003}. It is therefore natural to seek an
analogous description of Spielberg's models. While Spielberg's
construction does not lend itself immediately to such a
description, the discussion of \cite[Examples~1.5]{FS2002}
suggests that one may be able to think of Spielberg's hybrid
graphs as generalised $k$-graphs in which the degree functor
from $\Lambda$ to $\NN^k$ has been replaced by a functor taking
values in the free product $\NN^2
* \NN$. The results of \cite{pp_CLSV2009} then suggest that the purely
infinite simple $C^*$-algebra associated to a hybrid graph can
be regarded as a crossed product of an AF core by $\NN^2 * \NN$.

In this paper, we introduce the notion of a $P$-graph
(Definition~\ref{dfn:P-graph}) for a quasi-lattice ordered
group $(G,P)$ in the sense of Nica, and associate to each
$P$-graph $\Lambda$ a $C^*$-algebra $\Csmin{\Lambda}$. We show
that Spielberg's hybrid graphs can be regarded as $(\NN^2 *
\NN)$-graphs, and that the associated $(\NN^2 * \NN)$-graph
$C^*$-algebra as constructed in this paper coincides with the
purely infinite simple $C^*$-algebra associated to the hybrid
graph by Spielberg (Theorem~\ref{isom of algebras}). In
particular, the class of $P$-graph algebras contains, up to
Morita equivalence, every Kirchberg algebra.

Our approach to the construction of the $P$-graph algebra
associated to a $P$-graph $\Lambda$ does not follow the
traditional lines used for graphs and $k$-graphs in the
literature (see, for example, \cite{BHRS2002, KP2000,
KPRR1997}). Instead we proceed using the notion of a
co-universal $C^*$-algebra. This approach was inspired by
Katsura's description of the $C^*$-algebras he associates to
Hilbert bimodules \cite[Proposition~7.14]{Katsura2007}, and was
applied in \cite{pp_CLSV2009} to product systems. Our main
result, Theorem~\ref{thm:couniversal alg of Lambda}, says that
every finitely aligned $P$-graph $\Lambda$ admits a
$C^*$-algebra which is co-universal for representations of
$\Lambda$ which are nonzero on generators and carry a natural
coaction of $G$.

Co-universal properties have been explored previously as a
means of specifying $C^*$-algebras associated to directed
graphs \cite{pp_SW2009}. However, this approach is relatively
new, and one of our motivations for tackling $P$-graphs in this
way is to develop techniques for establishing the existence of
a co-universal algebra for a given system of generators and
relations. In particular, we address in Examples
\ref{eg:SY}~and~\ref{eg:CLSV} the problems arising in previous
approaches to co-universal algebras detailed in
\cite[Example~3.9]{pp_CLSV2009} and
\cite[Example~3.16]{pp_SY2007}. Our other motivation for using
co-universal properties is that we deal here with groups which
need not be amenable. Since unitary representations of the
groups themselves are, in some instances, examples of our
construction, one cannot expect to obtain a $C^*$-algebra which
satisfies a version of the gauge-invariant uniqueness theorem
as a universal $C^*$-algebra (see
\cite[Remark~5.4]{pp_CLSV2009}).


The notion of a representation of a $P$-graph $\Lambda$ and the
associated universal $C^*$-algebra $\Tt C^*(\Lambda)$ were
introduced in \cite{RS2005}. The algebra $\Tt C^*(\Lambda)$ is
generated by partial isometries $\{\Suniv_\mu : \mu \in
\Lambda\}$ and is spanned by the elements of the form
$\Suniv_\mu \Suniv_\nu^*$ such that $s(\mu) = s(\nu)$. The
fixed-point algebra for the canonical coaction of $G$ on $\Tt
C^*(\Lambda)$ is the subalgebra spanned by the elements
$\Suniv_\mu \Suniv_\nu^*$ such that $d(\mu) = d(\nu)$, where $d
: \Lambda \to P$ denotes the generalised length function.
Analysing this fixed-point subalgebra of $\Tt C^*(\Lambda)$ is the
traditional first step in establishing a uniqueness
theorem for $\Tt C^*(\Lambda)$ itself.

Our innovation in this paper is to begin by developing an
analysis of the universal $C^*$-algebra $\BalAlg{\Lambda}$
generated by partial isometries $\{\omega_{\mu,\nu} : d(\mu) =
d(\nu), s(\mu) = s(\nu)\}$ satisfying the same relations as the
$\Suniv_\mu \Suniv^*_\nu$. In particular, we characterise in
Theorem~\ref{thm:ideals in core} the ideals of
$\BalAlg{\Lambda}$ which contain none of the
$\omega_{\mu,\nu}$. We then use this analysis to construct a
$C^*$-algebra $\Bmin{\Lambda}$ which is co-universal for
representations of $\BalAlg{\Lambda}$ by nonzero partial
isometries (Theorem~\ref{thm:Bmin couniversal}). We prove that
$\omega_{\mu,\nu} \mapsto \Suniv_\mu \Suniv^*_\nu$ determines
an isomorphism of $\BalAlg{\Lambda}$ with the fixed-point
algebra in $\Tt C^*(\Lambda)$. We are then able to use the
categorical approach to coactions studied in \cite{EKQR2006} to
construct a $C^*$-algebra $\Csmin{\Lambda}$ which is generated
by a representation of $\Lambda$ by nonzero partial isometries $\{\Smin_\lambda : \lambda \in
\Lambda\}$, and carries a normal coaction of $G$ whose
fixed-point algebra coincides with $\Bmin{\Lambda}$. We present
a bootstrapping argument employing the canonical conditional
expectations associated to coactions and the universal property
of $\Tt C^*(\Lambda)$ to deduce from the co-universal property of $\Bmin{\Lambda}$ that $\Csmin{\Lambda}$ is
co-universal for representations of $\Lambda$ by nonzero
partial isometries which preserve the canonical coaction of
$G$. A key tool in our analysis of $\Bmin{\Lambda}$ is Exel's
use of filters and ultrafilters as a tool for studying
representations of inverse semigroups. Example~\ref{eg:SY} and
Remark~\ref{rmk:SY} highlight the advantage of this approach.

\section{Preliminaries}\label{sec:prelims}

Following Nica \cite{Nica1992}, we say that $(G,P)$ is a
\emph{quasi-lattice ordered group} if $G$ is a discrete group,
$P$ is a subsemigroup of $G$ such that $P \cap P^{-1} = \{ e
\}$, and, under the partial order $p \le q \Leftrightarrow
p^{-1} q \in P$ on $G$, every pair of elements $p, q \in G$
with a common upper bound in $P$ has a least common upper bound
$p \vee q$ in $P$. We write $p \vee q = \infty$ to indicate
that $p,q \in G$ have no common upper bound in $P$, and we
write $p \vee q < \infty$ otherwise.

\begin{dfn}\label{dfn:P-graph}
Let $(G,P)$ be a quasi-lattice ordered group. A
\emph{$P$-graph} $(\Lambda, d)$ consists of a countable
category $\Lambda = ( \Obj (\Lambda), \Hom (\Lambda), \cod, \dom)$
together with a functor $d \colon \Lambda \to P$, called the
\emph{degree map}, which satisfies the \emph{factorisation
property}: for every $\lambda \in \Lambda$ and $p, q \in P$
with $d(\lambda) = pq$ there exist unique elements $\mu, \nu
\in \Lambda$ such that $\lambda = \mu \nu$, $d(\mu) = p$, and
$d(\nu) = q$.
\end{dfn}

\begin{notation}
Let $(G,P)$ be a quasi-lattice ordered group, and let $\Lambda$
be a $P$-graph. For $p \in P$ we define
\[
\Lambda^{p} := \{\, \lambda \in \Lambda \colon d(\lambda) = p \,\}.
\]
The factorisation property implies that $\Lambda^0 = \{\id_o :
o \in \Obj (\Lambda)\}$. We define surjections $r,s : \Lambda
\to \Lambda^0$ by $r(\lambda) := \id_{\cod(\lambda)}$ and
$s(\lambda) := \id_{\dom(\lambda)}$, and we regard $\Lambda^0$
as the vertex set of $\Lambda$.

For $E \subset \Lambda$ and $\lambda \in \Lambda$ we define
\[
\lambda E := \{\, \lambda \mu \colon \mbox{$\mu \in E$ and $r(\mu)=s(\lambda)$} \,\}
\]
and
\[
E \lambda := \{\, \mu \lambda \colon \mbox{$\mu \in E$ and $s(\mu)=r(\lambda)$} \,\}.
\]
Hence, for $E \subset \Lambda$ and $v \in \Lambda^{0}$,
\[
vE = \{\, \mu \in E \colon r(\mu)=v \,\}
\]
and
\[
Ev := \{\, \mu \in E \colon s(\mu)=v \,\}.
\]
\end{notation}

We write $\Lambda *_s \Lambda$ for the set $\{(\mu,\nu) \in
\Lambda \times \Lambda : s(\mu) = s(\nu)\}$, and write
$\bal{\Lambda}$ for the set $\{(\mu,\nu) \in \Lambda *_s
\Lambda : d(\mu) = d(\nu)\}$ consisting of pairs which are
balanced with respect to the degree functor. More generally,
for any pair $U,V$ of subsets of $\Lambda$, we will write $U
*_s V$ for $(U \times V) \cap (\Lambda *_s \Lambda)$, and $U
*_{d,s} V$ for $(U \times V) \cap (\bal{\Lambda})$.

\medskip

\begin{dfn}
Let $(G,P)$ be a quasi-lattice ordered group and let $\Lambda$
be a $P$-graph. For $\mu, \nu \in \Lambda$ we say that $\lambda
\in \Lambda$ is a \emph{minimal common extension} of $\mu$ and
$\nu$ if $d(\mu)\vee d(\nu)<\infty$, $d(\lambda) = d(\mu) \vee d(\nu)$ and there exist
$\alpha \in \Lambda^{d(\mu)^{-1} (d(\mu) \vee d(\nu))}$ and
$\beta \in \Lambda^{d(\nu)^{-1} (d(\mu) \vee d(\nu))}$ such
that $\lambda = \mu \alpha = \nu \beta$. We write
$\MCE(\mu,\nu)$ for the set of minimal common extensions of
$\mu$ and $\nu$. We say that $\Lambda$ is \emph{finitely
aligned} if $\MCE(\mu,\nu)$ is finite (possibly empty) for all
$\mu, \nu \in \Lambda$. Given $v \in \Lambda^{0}$ we say that $E \subset
v \Lambda$ is \emph{exhaustive} if for every
$\mu \in v \Lambda$ there exists $\lambda \in E$ such that
$\MCE(\mu,\lambda) \ne \emptyset$.
\end{dfn}

Note that in particular, if $d(\mu) \vee d(\nu) = \infty$, then
$\MCE(\mu,\nu) = \emptyset$.

\begin{notation}\label{ntn:compacts}
We make frequent use of the abstract $C^*$-algebras generated
by matrix units indexed by countable sets. Fix a countable set
$X$. By \cite[Corollary~A.9 and Remark~A.10]{Raeburn2005},
there is a unique (up to canonical isomorphism) $C^*$-algebra
$\Kk_X$ generated by elements $\{\Theta_{x,y} : x,y \in X\}$
satisfying
\begin{equation}\label{eq:MU relations}
\Theta_{x,y}^* = \Theta_{y,x} \qquad\text{ and }\qquad
\Theta_{x,y}\Theta_{w,z} = \begin{cases} \Theta_{x,z} &\text{ if $y = w$} \\ 0 &\text{ otherwise.}\end{cases}
\end{equation}
We call a family satisfying \eqref{eq:MU relations} a
\emph{family of matrix units over} $X$. In particular, given
two such families $\{\alpha_{x,y} : x,y \in X\}$ and
$\{\beta_{x,y} : x,y \in X\}$, there is a unique isomorphism
$C^*(\{\alpha_{x,y} : x,y \in X\}) \to C^*(\{\beta_{x,y} : x,y
\in X\})$ which carries each $\alpha_{x,y}$ to $\beta_{x,y}$.
Since the set $\{\Theta_{x,y}:x,y\in X\}$ is closed under
adjoints and multiplication, $\Kk_X=\clsp\{\Theta_{x,y}:x,y\in
X\}$.

For a finite subset $F$ of $X$, write $P_F \in \Kk_X$ for the
projection $P_F := \sum_{x \in F} \Theta_{x,x}$. An $\varepsilon/3$ argument shows that the net
$\{P_F : F \subset X \text{ is finite}\}$ is an approximate
identity for $\Kk_X$.
\end{notation}

\begin{notation*}
In this paper, given a finitely aligned $P$-graph $\Lambda$, we
deal both with representations of $\bal{\Lambda}$, and also
with representations of $\Lambda$ itself. In addition, in each
case there are two distinguished representations
--- the universal representation and the co-universal
representation --- which we frequently wish to talk about.

Our convention will be that Greek letters are used to denote
representations of $\bal{\Lambda}$, and Roman letters are used
to denote representations of $\Lambda$; and the universal and
co-universal representations will be denoted by the same letter
in lower case and upper case respectively.
\end{notation*}

\section{Filters and ultrafilters in $P$-graphs}\label{sec:filters}

In the theories of graph $C^*$-algebras and of $k$-graph
$C^*$-algebras, spaces of infinite paths --- or an appropriate
analogue --- are often used to construct a representation by
nonzero partial isometries. Precisely what should constitute an
infinite path in a $P$-graph is not immediately clear; in fact,
the question is already complicated enough for $k$-graphs. In
this section we show how the roles played by paths and infinite
paths in the representation theory of $k$-graphs can be played
by filters and ultrafilters in the setting of $P$-graphs. We
show how initial segments can be appended to or removed from
filters and ultrafilters, and use this construction to
associate to each $P$-graph a specific family of partial
isometries on Hilbert space which will later be a key
ingredient in our construction of the co-universal algebra of
the $P$-graph. We took the idea of using ultrafilters to obtain
a minimal representation from Exel who introduced it in the
context of partial-isometric representations of inverse
semigroups \cite{Exel2009}.

Let $(G,P)$ be a quasi-lattice ordered group, and let $\Lambda$
be a finitely aligned $P$-graph. We define a relation $\preceq$
on $\Lambda$ by $\lambda \preceq \mu$ if and only if $\mu =
\lambda\mu'$ for some $\mu' \in \Lambda$.

\begin{dfn}\label{dfn:filters}
A \emph{filter} of $\Lambda$ is a nonempty subset $U$ of
$\Lambda$ such that
\begin{itemize}
\item[(F1)] if $\mu \in U$ and $\lambda \preceq \mu$ then
    $\lambda \in U$, and
\item[(F2)] if $\mu,\nu \in U$, then there exists $\lambda
    \in U$ such that $\mu,\nu \preceq \lambda$.
\end{itemize}
\end{dfn}
Fix a filter $U$ of $\Lambda$. The factorisation property and~(F2) imply that if $\mu,\nu \in U$, then there is a
unique element $\lambda$ of $\MCE(\mu,\nu)$ such that $\lambda
\in U$. This combined with~(F1) and that $U$ is nonempty implies that there is a unique $v \in \Lambda^0$ such
that $v \in U$, and then we have $r(\lambda) = v$ for all
$\lambda \in U$. We write $r(U) = v$.

We write $\widehat{\Lambda}$ for the collection of all filters
of $\Lambda$, and we regard $\widehat{\Lambda}$ as a partially ordered set under inclusion.
An \emph{ultrafilter} of $\Lambda$ is a filter $U \in
\widehat{\Lambda}$ which is maximal; that is, $U$ is not
properly contained in any other filter $V$ of $\Lambda$. We
write $\widehat{\Lambda}_\infty$ for the collection of all
ultrafilters of $\Lambda$.

\begin{lemma}\label{lem:plenty of ultrafilters}
Let $(G,P)$ be a quasi-lattice ordered group, and let $\Lambda$
be a finitely aligned $P$-graph. For each $\lambda \in \Lambda$
there exists an ultrafilter $U$ of $\Lambda$ such that $\lambda
\in U$.
\end{lemma}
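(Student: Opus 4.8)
The natural strategy is a Zorn's-lemma argument: first produce *some* filter containing $\lambda$, then enlarge it to a maximal one. For the first step, the obvious candidate is the "principal" set $U_\lambda := \{\mu \in \Lambda : \mu \preceq \lambda\}$, i.e.\ the set of initial segments of $\lambda$. This is clearly nonempty (it contains $r(\lambda)$ and $\lambda$) and closed downward under $\preceq$, so (F1) holds. For (F2), given $\mu,\nu \preceq \lambda$, I would use the factorisation property to write $\lambda = \mu\mu' = \nu\nu'$; then $\lambda$ itself is a common extension of $\mu$ and $\nu$ lying in $U_\lambda$, which suffices for (F2) (note (F2) only asks for *a* common upper bound in $U$, not a minimal one, so I don't even need $\MCE(\mu,\nu)$ here). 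Hence $U_\lambda \in \widehat{\Lambda}$.

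For the second step, I would apply Zorn's lemma to the poset of filters of $\Lambda$ that contain $U_\lambda$, ordered by inclusion. The key point is that the union of a chain of filters is again a filter: (F1) is immediate since each member satisfies it, and (F2) holds because any two elements $\mu,\nu$ of the union already lie in a common member of the chain (chains are totally ordered), where a common $\preceq$-upper bound can be found. The union obviously still contains $U_\lambda$, so every chain has an upper bound in the poset, and Zorn gives a maximal element $U$. Maximality of $U$ in this sub-poset is the same as maximality among all filters of $\Lambda$: any filter properly containing $U$ would in particular contain $U_\lambda$, contradicting maximality. Thus $U$ is an ultrafilter, and $\lambda \in U_\lambda \subseteq U$.

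The argument is essentially routine; the only place requiring a little care is the verification that a union of a chain of filters satisfies (F2), since one must observe that finitely many elements of the union lie in a single member of the chain. Finite alignment of $\Lambda$ is not actually needed for this lemma (it was used only to make the remarks preceding the statement about uniqueness of the element of $\MCE(\mu,\nu)$ in a filter), so I would not invoke it. An alternative to Zorn would be to exploit countability of $\Lambda$ and build the ultrafilter by a greedy enumeration, but the Zorn argument is cleaner and I would use that.
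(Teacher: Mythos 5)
Your proposal is correct and follows essentially the same route as the paper: the paper's initial filter $\{\mu \in \Lambda : \lambda \in \mu\Lambda\}$ is exactly your principal filter $U_\lambda$, and the Zorn's-lemma argument (union of a chain is a filter; a maximal element of the sub-poset is maximal among all filters) is identical. Your side remark that finite alignment is not needed here is also accurate.
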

\begin{proof}
We aim to apply Zorn's lemma. Let $\Xx_\lambda$ denote the
collection of all filters $U$ of $\Lambda$ such that $\lambda
\in U$. Observe that $\Xx_\lambda$ is nonempty because $\{\mu
\in \Lambda : \lambda \in \mu\Lambda\}$ is a filter of
$\Lambda$ which contains $\lambda$.

Fix a totally ordered subset $\Yy$ of $\Xx_\lambda$. We claim
that $\bigcup \Yy$ is an upper bound for $\Yy$ in
$\Xx_\lambda$. To see this, it suffices to show that $\bigcup
\Yy$ is a filter of $\Lambda$; that is, we must verify
(F1)~and~(F2). For~(F1), suppose that $\mu \in \bigcup \Yy$ and
$\lambda \preceq \mu$. By definition of $\bigcup \Yy$, we have
$\mu \in V$ for some filter $V \in \Yy$; and then since $V$ is
a filter we have $\lambda \in V \subset \bigcup \Yy$ also.
For~(F2), suppose that $\mu,\nu \in \bigcup \Yy$. Then there
exist $V, W \in \Yy$ such that $\mu \in V$ and $\nu \in W$.
Since $\Yy$ is totally ordered, we may suppose without loss of
generality that $V \subset W$. So $\mu,\nu \in W$, and since
$W$ is a filter, it follows that there exists $\lambda \in W
\subset \bigcup \Yy$ such that $\mu,\nu \preceq W$. Hence
$\bigcup \Yy$ is an upper bound for $\Yy$ as claimed.

Zorn's Lemma now implies that $\Xx_\lambda$
has a maximal element $U$. We have $\lambda \in U$ by
definition of $\Xx_\lambda$. To see that $U$ is an ultrafilter,
observe that if $V$ is a filter with $U \subset V$, then
$\lambda \in U \subset V$ forces $V \in \Xx_\lambda$, and since
$U$ is maximal in $\Xx_\lambda$, it follows that $U = V$.
\end{proof}

\begin{lemma}\label{lem:ultrafilters-FEsets}
Let $(G,P)$ be a quasi-lattice ordered group, and let $\Lambda$
be a finitely aligned $P$-graph. Let $\mu \in \Lambda$, and let
$E$ be a finite exhaustive subset of $s(\mu)\Lambda$. Let $U$
be an ultrafilter of $\Lambda$ such that $\mu \in U$. Then
there exists $\alpha \in E$ such that $\mu\alpha \in U$.
\end{lemma}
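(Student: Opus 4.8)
The plan is to argue by contradiction. Suppose that $\mu\alpha\notin U$ for every $\alpha\in E$; I will construct a filter $V$ with $U\subsetneq V$, contradicting maximality of $U$. The first step is to pass to the part of $U$ lying above $\mu$. Put $W:=\{\lambda\in U:\mu\preceq\lambda\}$. Using (F2) for $U$ one checks that $W$ is nonempty (it contains $\mu$), is upward-directed under $\preceq$, and is cofinal in $U$. By the factorisation property each $\lambda\in W$ factors uniquely as $\lambda=\mu\lambda'$ with $\lambda'\in s(\mu)\Lambda$, and the uniqueness half of the factorisation property shows that $\lambda_1\preceq\lambda_2$ in $W$ forces $\lambda_1'\preceq\lambda_2'$.

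Next I use the finiteness of $E$ to pin down a single $\alpha$. For $\lambda=\mu\lambda'\in W$ set $E_\lambda:=\{\alpha\in E:\MCE(\lambda',\alpha)\neq\emptyset\}$; this is nonempty because $E$ is exhaustive in $s(\mu)\Lambda$ and $\lambda'\in s(\mu)\Lambda$. A truncation argument shows that if $\lambda_1'\preceq\lambda_2'$ and $\zeta\in\MCE(\lambda_2',\alpha)$, then the initial segment of $\zeta$ of degree $d(\lambda_1')\vee d(\alpha)$ lies in $\MCE(\lambda_1',\alpha)$; hence $\lambda_1\preceq\lambda_2$ implies $E_{\lambda_2}\subseteq E_{\lambda_1}$. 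Since $W$ is directed and $E$ is finite, $\{E_\lambda:\lambda\in W\}$ is a downward-directed family of nonempty subsets of a finite set and so has nonempty intersection; fix $\alpha$ in this intersection. Then $\MCE(\lambda',\alpha)\neq\emptyset$ for every $\lambda\in W$, and since left translation by $d(\mu)$ is an order automorphism of $(G,\le)$, and therefore preserves least upper bounds, one has $\MCE(\mu\lambda',\mu\alpha)=\{\mu\eta:\eta\in\MCE(\lambda',\alpha)\}$, so $\MCE(\lambda,\mu\alpha)\neq\emptyset$ for every $\lambda\in W$.

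The substantive step is to choose these minimal common extensions coherently. For $\lambda_1\preceq\lambda_2$ in $W$ we have $d(\lambda_1)\vee d(\mu\alpha)\le d(\lambda_2)\vee d(\mu\alpha)$, so truncation to degree $d(\lambda_1)\vee d(\mu\alpha)$ gives a well-defined map $\MCE(\lambda_2,\mu\alpha)\to\MCE(\lambda_1,\mu\alpha)$, and these maps are mutually compatible. Thus $(\MCE(\lambda,\mu\alpha))_{\lambda\in W}$ is an inverse system of finite nonempty sets indexed by the directed poset $(W,\preceq)$, and by the standard argument (an inverse limit of nonempty finite sets over a directed index set is nonempty) there is a family $(\zeta_\lambda)_{\lambda\in W}$ with $\zeta_\lambda\in\MCE(\lambda,\mu\alpha)$ and $\zeta_{\lambda_1}\preceq\zeta_{\lambda_2}$ whenever $\lambda_1\preceq\lambda_2$. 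I then check that $V:=\{\eta\in\Lambda:\eta\preceq\zeta_\lambda\text{ for some }\lambda\in W\}$ is a filter: (F1) is immediate, (F2) follows because $W$ is directed and the $\zeta_\lambda$ are coherent, and $V$ is nonempty. Since $\lambda\preceq\zeta_\lambda$ for each $\lambda\in W$ and $W$ is cofinal in $U$, we get $U\subseteq V$; and $\mu\alpha\preceq\zeta_\lambda$ gives $\mu\alpha\in V$. As $\mu\alpha\notin U$ by assumption, $V$ properly contains $U$, contradicting maximality; this is the required contradiction.

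I expect the main obstacle to be the coherent choice of the $\zeta_\lambda$: one must verify that the truncation maps between the $\MCE$-sets are well-defined (this is exactly where finite alignment and the factorisation property are used) and compatible, and then invoke non-emptiness of the inverse limit. The remaining ingredients — that $W$ is directed and cofinal in $U$, that $\lambda\mapsto\lambda'$ is order-preserving, that left translation by $d(\mu)$ preserves $\vee$, and the manipulations of initial segments and degrees — are routine.
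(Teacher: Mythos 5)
Your proof is correct, and its skeleton is the same as the paper's: isolate a single $\alpha\in E$ that is compatible with everything in $U$, build a $\preceq$-coherent family of minimal common extensions of elements of $U$ with $\mu\alpha$, observe that the downward closure of that family is a filter containing both $U$ and $\mu\alpha$, and invoke maximality of $U$. The two places where you deviate are worth noting. First, you select $\alpha$ by observing that $\lambda\mapsto E_\lambda$ is a downward-directed family of nonempty subsets of the finite set $E$ and so has nonempty intersection; the paper instead supposes each $\alpha$ fails, takes a common upper bound of the witnesses $\lambda_\alpha$ in $U$, and contradicts exhaustiveness. These are essentially the same argument (your truncation claim is the contrapositive of the paper's ``$\MCE(\mu\alpha,\lambda_\alpha)=\emptyset$ and $\lambda\in\lambda_\alpha\Lambda$ imply $\MCE(\mu\alpha,\lambda)=\emptyset$''). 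Second, and more substantively, you obtain the coherent family $(\zeta_\lambda)$ by citing nonemptiness of the inverse limit of finite nonempty sets over the directed poset $(W,\preceq)$, whereas the paper extracts a cofinal sequence $(\lambda_n)$ from $U$ (using countability of $\Lambda$) and runs an explicit pigeonhole induction, carrying along the auxiliary condition that $\MCE(\xi_n,\lambda_m)\neq\emptyset$ for all $m$ to avoid a greedy choice that dead-ends. Your compactness appeal is cleaner and does not use countability, at the cost of invoking a nontrivial general fact (K\H{o}nig's lemma in the countable case, Tychonoff in general); the paper's version is more elementary and self-contained. Both hinge on finite alignment to make the relevant $\MCE$ sets finite, and both use the factorisation property to make the truncation maps well defined and compatible, exactly as you anticipate in your final paragraph.
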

\begin{proof}
We first claim that there exists $\alpha \in E$ such that
$\MCE(\mu\alpha,\lambda) \not= \emptyset$ for all $\lambda \in
U$. To see this, suppose for contradiction that for each
$\alpha\in E$ there exists $\lambda_\alpha \in U$ such that
$\MCE(\mu\alpha,\lambda_\alpha) = \emptyset$. Since $U$ is a
filter, there exists $\lambda \in U$ such that $\mu \preceq
\lambda$ and $\lambda_\alpha \preceq \lambda$ for all $\alpha
\in E$. Fix $\alpha \in E$. Since
$\MCE(\mu\alpha,\lambda_\alpha) = \emptyset$ and $\lambda \in
\lambda_\alpha \Lambda$, we have $\MCE(\mu\alpha,\lambda) =
\emptyset$. Since $\mu \preceq \lambda$ we may factorise
$\lambda = \mu\lambda'$ and then since
\[
\mu\MCE(\alpha,\lambda') = \MCE(\mu\alpha,\mu\lambda') = \MCE(\mu\alpha,\lambda) = \emptyset,
\]
we have $\MCE(\alpha,\lambda') = \emptyset$. Since $\alpha \in
E$ was arbitrary, this contradicts that $E$ is exhaustive.

Fix $\alpha \in E$ such that $\MCE(\mu\alpha,\lambda) \not=
\emptyset$ for all $\lambda \in U$. We will show that
$\mu\alpha \in U$. Since $\Lambda$, and hence $U$, is countable
there is a cofinal sequence $(\lambda_n)^\infty_{n=1}$ in $U$
with $\lambda_1 = \mu$ and $\lambda_n \preceq \lambda_{n+1}$
for all $n$. We have $\MCE(\mu\alpha, \lambda_n) \not=
\emptyset$ for all $n$.

We claim that there exists a sequence $(\xi_n)^\infty_{n=1}$ in
$\Lambda$ such that for each $n$,
\begin{enumerate}
\item\label{it:in MCE} $\xi_n \in \MCE(\mu\alpha,
    \lambda_n)$,
\item\label{it:inceasing} if $n \ge 2$, then $\xi_{n-1}
    \preceq \xi_{n}$, and
\item\label{it:all the way up} $\MCE(\xi_n, \lambda_m)
    \not= \emptyset$ for all $m$.
\end{enumerate}
We prove the claim by induction on $n$. When $n = 1$, the path
$\xi_1 := \mu\alpha$ satisfies (\ref{it:in
MCE})~and~(\ref{it:all the way up}) by choice of $\alpha$,
and~(\ref{it:inceasing}) is trivial.

Now suppose that there are paths $\xi_1, \dots \xi_k$
satisfying (\ref{it:in MCE})--(\ref{it:all the way up}). For
each $m > k $, the set $\MCE(\xi_k, \lambda_m)$ is nonempty by
the inductive hypothesis, so we may fix $\eta_m \in \MCE(\xi_k,
\lambda_m)$. Fix $m > k$. Since $\lambda_{k+1} \preceq
\lambda_m$, there is a unique $\xi \in \MCE(\xi_k,
\lambda_{k+1})$ such that $\xi \preceq \eta_m$. Since $\Lambda$
is finitely aligned, $\MCE(\xi_k, \lambda_{k+1})$ is finite, so
there exists $\xi_{k+1} \in \MCE(\xi_k, \lambda_{k+1})$ such
that $\xi_{k+1} \preceq \eta_m$ for infinitely many $m > k$. We
claim that this $\xi_{k+1}$ satisfies (\ref{it:in
MCE})--(\ref{it:all the way up}). It is straightforward to see
that $\xi_{k+1}$ satisfies~(\ref{it:in MCE}) using that $\xi_k$
satisfies~(\ref{it:in MCE}), and that $\lambda_k \preceq
\lambda_{k+1}$. It satisfies~(\ref{it:inceasing}) by
definition. To see that it satisfies~(\ref{it:all the way up}),
fix $m \in \NN$. By choice of $\xi_{k+1}$ there exists $m' \ge
m$ such that $\xi_{k+1} \preceq \eta_{m'} \in \MCE(\mu\alpha,
\lambda_{m'})$ and then that $\lambda_m \preceq \lambda_{m'}$
forces $\MCE(\xi_{k+1}, \lambda_m) \not= \emptyset$ also. This
completes the proof of the claim.

Now let $V := \bigcup_{n \in \NN} \{\zeta \in \Lambda : \zeta \preceq \xi_n\}$. That the $\xi_n$ are increasing with respect to
$\preceq$ implies that $V$ is a filter.
Since the $\lambda_n$ are cofinal in $U$ and since each
$\lambda_n \preceq \xi_n$, we have $U \subset V$. Since $U$ is
an ultrafilter, it follows that $V = U$, and since $\mu\alpha
\preceq \xi_n\in V$ for all $n$, it follows that $\mu\alpha \in U$ as
claimed.
\end{proof}

Fix $\lambda \in \Lambda$. For $U \in \widehat{\Lambda}$ with
$r(U) = s(\lambda)$, we define
\[
\lambda \cdot U
    := \bigcup_{\mu\in U}\{\alpha \in \Lambda :\alpha\preceq\lambda\mu\}.
\]
For $V \in \widehat{\Lambda}$ such that $\lambda \in V$, we
define
\[
\lambda^*\cdot V := \{\mu \in \Lambda : \lambda\mu \in V\}.
\]

\begin{lemma}\label{action}
Let $(G,P)$ be a quasi-lattice ordered group, and let $\Lambda$
be a finitely aligned $P$-graph. Fix $\lambda \in \Lambda$, and
$U,V \in \widehat{\Lambda}$ with $r(U) = s(\lambda)$ and
$\lambda \in V$. Then
\begin{enumerate}
\item\label{it:action-preserves spectrum} $\lambda\cdot U$
    and $\lambda^*\cdot V$ belong to $\widehat{\Lambda}$;
\item\label{it:action-mutually inverse}
    $\lambda^*\cdot(\lambda\cdot U) = U$ and
    $\lambda\cdot(\lambda^*\cdot V)=V$; and
\item\label{it:action-preserves boundary} $U \in
    \widehat{\Lambda}_\infty \implies \lambda\cdot U \in
    \widehat{\Lambda}_\infty$, and $V \in
    \widehat{\Lambda}_\infty \implies \lambda^* \cdot V \in
    \widehat{\Lambda}_\infty$.
\end{enumerate}
\end{lemma}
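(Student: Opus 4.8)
The plan is to verify the three assertions in turn, using the explicit descriptions of $\lambda\cdot U$ and $\lambda^*\cdot V$ as downward closures of the obvious "candidate" sets.

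First I would prove part~(\ref{it:action-preserves spectrum}). For $\lambda\cdot U$ the condition~(F1) is immediate since $\lambda\cdot U$ is by definition a union of sets of the form $\{\alpha:\alpha\preceq\lambda\mu\}$, each of which is $\preceq$-downward closed. For~(F2), given $\alpha\preceq\lambda\mu$ and $\alpha'\preceq\lambda\mu'$ with $\mu,\mu'\in U$, I use~(F2) for $U$ to find $\nu\in U$ with $\mu,\mu'\preceq\nu$; then $\lambda\mu,\lambda\mu'\preceq\lambda\nu$ (here I need that $\preceq$ is preserved under left multiplication by $\lambda$, which is clear from the factorisation property and the definition of $\preceq$), so $\alpha,\alpha'\preceq\lambda\nu\in\lambda\cdot U$. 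Nonemptiness is clear since $U$ is nonempty. For $\lambda^*\cdot V$, property~(F1): if $\lambda\mu\in V$ and $\mu'\preceq\mu$, then $\lambda\mu'\preceq\lambda\mu$, so $\lambda\mu'\in V$ by~(F1) for $V$, giving $\mu'\in\lambda^*\cdot V$. Property~(F2): if $\lambda\mu,\lambda\mu'\in V$, apply~(F2) for $V$ to get $\eta\in V$ with $\lambda\mu,\lambda\mu'\preceq\eta$; since $\lambda\preceq\eta$ we may factorise $\eta=\lambda\eta'$, and then $\mu,\mu'\preceq\eta'$ with $\lambda\eta'\in V$, so $\eta'\in\lambda^*\cdot V$. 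Nonemptiness: $\lambda\in V$ forces $s(\lambda)\in\lambda^*\cdot V$ (using $\lambda\,\id_{s(\lambda)}=\lambda\in V$). One technical point to record carefully is the well-definedness of the factorisation $\eta=\lambda\eta'$, which is exactly where the factorisation property of Definition~\ref{dfn:P-graph} is used.

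Next, part~(\ref{it:action-mutually inverse}). To show $\lambda^*\cdot(\lambda\cdot U)=U$: the inclusion $U\subseteq\lambda^*\cdot(\lambda\cdot U)$ holds because for $\mu\in U$ we have $\lambda\mu\preceq\lambda\mu\in\lambda\cdot U$, hence $\mu\in\lambda^*\cdot(\lambda\cdot U)$. Conversely, if $\mu\in\lambda^*\cdot(\lambda\cdot U)$ then $\lambda\mu\in\lambda\cdot U$, so $\lambda\mu\preceq\lambda\nu$ for some $\nu\in U$; cancelling $\lambda$ on the left (again by the uniqueness in the factorisation property) gives $\mu\preceq\nu$, and then $\mu\in U$ by~(F1). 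For $\lambda\cdot(\lambda^*\cdot V)=V$: if $\alpha\in\lambda\cdot(\lambda^*\cdot V)$ then $\alpha\preceq\lambda\mu$ for some $\mu$ with $\lambda\mu\in V$, so $\alpha\in V$ by~(F1). Conversely, given $\alpha\in V$, use~(F2) for $V$ with the pair $\alpha,\lambda$ to get $\beta\in V$ with $\alpha,\lambda\preceq\beta$; write $\beta=\lambda\mu$, so $\mu\in\lambda^*\cdot V$ and $\alpha\preceq\lambda\mu$, whence $\alpha\in\lambda\cdot(\lambda^*\cdot V)$. This last step is the one that genuinely needs both filter axioms for $V$ simultaneously.

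Finally, part~(\ref{it:action-preserves boundary}). In view of part~(\ref{it:action-mutually inverse}), the maps $U\mapsto\lambda\cdot U$ and $V\mapsto\lambda^*\cdot V$ are mutually inverse bijections between $\{U\in\widehat\Lambda:r(U)=s(\lambda)\}$ and $\{V\in\widehat\Lambda:\lambda\in V\}$, so it suffices to show that $U\mapsto\lambda\cdot U$ is order-preserving for $\subseteq$, and similarly for $\lambda^*$; then a maximal element maps to a maximal element in the respective poset. Order-preservation of $\lambda\cdot(-)$ is immediate from the definition (a larger $U$ yields a larger union). For $\lambda^*\cdot(-)$, if $V\subseteq V'$ and $\lambda\mu\in V$ then $\lambda\mu\in V'$, so $\lambda^*\cdot V\subseteq\lambda^*\cdot V'$. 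To conclude: if $U$ is an ultrafilter and $\lambda\cdot U\subseteq W$ for some filter $W$, note $\lambda\in W$ (as $\lambda=\lambda\,r(U)\in\lambda\cdot U$ after observing $r(U)\in U$), so $\lambda^*\cdot W$ is a filter with $r(\lambda^*\cdot W)=s(\lambda)$ containing $U=\lambda^*\cdot(\lambda\cdot U)\subseteq\lambda^*\cdot W$; maximality of $U$ forces $U=\lambda^*\cdot W$, and applying $\lambda\cdot(-)$ gives $\lambda\cdot U=W$. The argument for $\lambda^*$ is symmetric. I expect the main obstacle to be purely bookkeeping: keeping straight which filter axiom is invoked where, and being scrupulous about the cancellation law $\lambda\mu\preceq\lambda\nu\Rightarrow\mu\preceq\nu$, which relies on the uniqueness clause of the factorisation property rather than on any cancellativity of the category per se.
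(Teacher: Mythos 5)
Your proposal is correct and follows essentially the same route as the paper's proof: verify (F1)/(F2) directly for $\lambda\cdot U$ and $\lambda^*\cdot V$, establish the two inverse identities by the same double-inclusion arguments (the paper is terser about the left-cancellation $\lambda\mu\preceq\lambda\nu\Rightarrow\mu\preceq\nu$, which you rightly trace to the uniqueness clause of the factorisation property), and deduce part~(3) by pulling a putative larger filter back through $\lambda^*$ and invoking maximality. No gaps.
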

\begin{proof}
(\ref{it:action-preserves spectrum})~Since $\lambda \in \lambda
\cdot U$ and $s(\lambda) \in \lambda^* \cdot V$, both $\lambda
\cdot U$ and $\lambda^* \cdot V$ are nonempty.

It is routine to use the factorisation property to check that
both $\lambda \cdot U$ and $\lambda^* \cdot V$ satisfy~(F1).
Suppose $\mu,\nu \in \lambda \cdot U$. Then there exist
$\alpha,\beta \in U$ such that $\mu\preceq \lambda\alpha$ and
$\nu \preceq \lambda\beta$. Since $U \in \widehat{\Lambda}$,
there exists $\eta \in U$ such that $\alpha,\beta \preceq
\eta$, and the factorisation property then forces $\mu,\nu
\preceq \lambda\eta \in \lambda \cdot U$. So $\lambda\cdot U$
satisfies~(F2). Now suppose that $\mu,\nu \in \lambda^* \cdot
V$. Then $\lambda\mu, \lambda\nu \in V$. Since $V \in
\widehat{\Lambda}$, there exists $\eta \in V$ such that
$\lambda\mu,\lambda\nu \preceq \eta$; it then follows from the
factorisation property that $\eta = \lambda\eta'$ for some
$\eta'$ with $\mu,\nu \preceq \eta'$, and we have $\eta' \in
\lambda^*\cdot V$ by definition. So $\lambda^*\cdot V$
satisfies (F2). This completes the proof of (1).

(\ref{it:action-mutually inverse})~We have
\[
\mu \in \lambda^* \cdot (\lambda \cdot U)
    \iff \lambda\mu \in \lambda \cdot U
    \iff \mu \in U,
\]
so $\lambda^*\cdot(\lambda\cdot U)=U$. To see that $\lambda\cdot(\lambda^*\cdot V)=V$, we first calculate
\begin{align*}
\mu \in \lambda \cdot (\lambda^* \cdot V)
    &\iff \mu \preceq \lambda\alpha\text{ for some $\alpha \in \lambda^*\cdot V$}\\
    &\iff \mu \preceq \lambda\alpha\text{ for some $\alpha$ with $\lambda\alpha \in V$}.
\end{align*}
Since $V$ satisfies~(F2) and $\lambda \in V$, every $\nu \in V$
satisfies $\nu \preceq \lambda\alpha$ for some $\lambda\alpha \in
V$. That is $\nu \in V$ if and only if $\nu \preceq \lambda\alpha$
for some $\lambda\alpha \in V$, and it follows that $\mu \in
\lambda \cdot (\lambda^* \cdot V)$ if and only if $\mu \in V$
as required.

(\ref{it:action-preserves boundary})~Suppose that $U \in
\widehat{\Lambda}_\infty$, and suppose that $U' \in
\widehat{\Lambda}$ with $\lambda\cdot U \subset U'$. We must
show that $U' = \lambda\cdot U$. We have $\lambda \in \lambda
\cdot U \subset U'$, so $\lambda^* \cdot U'$ makes sense. We
then have
\[
\lambda^* \cdot U' \supset \lambda^*\cdot(\lambda\cdot U) = U
\]
by part~(\ref{it:action-mutually inverse}). Since $U$ is an
ultrafilter, it follows that $\lambda^* \cdot U' = U$ and then
another application of~(\ref{it:action-mutually inverse}) gives
\[
U' = \lambda \cdot (\lambda^* \cdot U') = \lambda \cdot U.
\]
A similar argument shows that $\lambda^*\cdot V$ is an
ultrafilter.
\end{proof}

\begin{dfn}\label{dfn:Sfilters}
Let $(G,P)$ be a quasi-lattice ordered group, and let $\Lambda$
be a finitely aligned $P$-graph. Define $\Sfilters : \Lambda
\to \Bb(\ell^2(\widehat{\Lambda}))$ by $\Sfilters_\lambda e_U
:= \delta_{s(\lambda), r(U)} e_{\lambda \cdot U}$.
\end{dfn}

Routine calculations using the inner-product on
$\ell^2(\widehat{\Lambda})$ (see for
example~\cite[Proposition~2.12]{RSY2004}) show that the
$\Sfilters_\lambda$ are partial isometries with adjoints characterised by
\begin{equation}\label{eq:Smin adjoints}
\Sfilters^*_\lambda e_U = \begin{cases}
    e_{\lambda^* \cdot U} &\text{ if $\lambda \in U$} \\
    0 &\text{ otherwise}.
\end{cases}
\end{equation}
In particular, for each $\lambda \in \Lambda$, the operator
$\Sfilters_\lambda \Sfilters^*_\lambda$ is the orthogonal
projection onto the subspace $\clsp\{e_U : U \in
\widehat{\Lambda}, \lambda \in U\} \subset
\ell^2(\widehat{\Lambda})$.

\section{The balanced algebras of a $P$-graph}\label{sec:C0(G0)}

In this section we introduce and analyse what we call the
\emph{balanced algebras} of a $P$-graph. We associate to each
finitely aligned $P$-graph two balanced algebras --- a
universal balanced algebra, and a quotient thereof, which we
call the co-universal balanced algebra. For a $k$-graph, the
universal balanced algebra would correspond to the fixed-point
algebra for the gauge-action on the Toeplitz algebra of the
$k$-graph, and the co-universal balanced algebra to the
fixed-point algebra for the gauge action on the Cuntz-Krieger
algebra of the $k$-graph.

We show in the next section that the universal
balanced algebra of a $P$-graph is isomorphic to the
fixed-point algebra for the canonical coaction of $G$ on the
Toeplitz algebra of the $P$-graph. We then use this and a
bootstrapping argument to construct the co-universal algebra of
the $P$-graph. It turns out that the analysis
of \cite{pp_CLSV2009} is greatly simplified by first
demonstrating that the fixed-point subalgebra of the Toeplitz
algebra has a universal property and admits a co-universal quotient in its own right.

\subsection{The universal balanced algebra}\label{subsec:univ
bal}

In this subsection we define the universal balanced algebra of
a $P$-graph $\Lambda$ and characterise the representations of this balanced algebra.

\begin{dfn}
Let $(G,P)$ be a quasi-lattice ordered group, and let $\Lambda$
be a finitely aligned $P$-graph. A \emph{representation} of
$\bal{\Lambda}$ in a $C^*$-algebra $B$ is a map $\tau :
\bal{\Lambda} \to B$, $(\mu,\nu) \mapsto \tau_{\mu,\nu}$ such
that for all $(\mu,\nu), (\xi,\eta) \in \bal{\Lambda}$,
\begin{enumerate}
\item[(B1)] $\tau_{\mu,\nu}^* = \tau_{\nu,\mu}$, and
\item[(B2)] $\tau_{\mu,\nu}\tau_{\xi,\eta} =
    \sum_{\nu\alpha = \xi\beta \in \MCE(\nu,\xi)}
    \tau_{\mu\alpha,\eta\beta}$.
\end{enumerate}
We denote by $C^*(\tau)$ the $C^*$-subalgebra of $B$
generated by the $\tau_{\mu,\nu}$.
\end{dfn}

Observe that $\MCE(\mu,\mu) = \{\mu\}$ for all $\mu$. Hence
(B1)~and~(B2) imply that $\tau_{\mu,\mu} = \tau_{\mu,\mu}^2 =
\tau_{\mu,\mu}^*$, so each $\tau_{\mu,\mu}$ is a projection.
Moreover, $\tau_{\mu,\nu}^*\tau_{\mu,\nu} = \tau_{\nu,\nu}$ for
all $(\mu,\nu) \in \bal{\Lambda}$. So the range of a
representation of $\bal{\Lambda}$ consists of partial
isometries. Finally, since $\MCE(\mu,\nu) = \MCE(\nu,\mu)$ for
all $\mu$ and $\nu$, condition~(B2) implies that the
projections $\{\tau_{\mu,\mu} : \mu \in \Lambda\}$ pairwise
commute.

\begin{lemma}\label{lem:filter repn}
Let $(G,P)$ be a quasi-lattice ordered group, and let $\Lambda$
be a finitely aligned $P$-graph. Let $\Sfilters : \Lambda \to
\Bb(\ell^2(\widehat{\Lambda}))$ be as in
Definition~\textup{\ref{dfn:Sfilters}}. Then the map $(\mu,\nu) \mapsto
\Sfilters_\mu \Sfilters^*_\nu$ is a representation of
$\bal{\Lambda}$.
\end{lemma}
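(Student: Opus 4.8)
The plan is to check that $(\mu,\nu) \mapsto \Sfilters_\mu\Sfilters_\nu^*$ satisfies~(B1) and~(B2). Axiom~(B1) is immediate, since $(\Sfilters_\mu\Sfilters_\nu^*)^* = \Sfilters_\nu\Sfilters_\mu^*$. The content lies entirely in~(B2), and the crucial ingredient is the identity
\[
\Sfilters_\nu^*\,\Sfilters_\xi = \sum_{\nu\alpha = \xi\beta \in \MCE(\nu,\xi)} \Sfilters_\alpha\Sfilters_\beta^*
\qquad\text{for all } \nu,\xi \in \Lambda,
\]
which I would prove by applying both sides to each basis vector $e_U$, $U \in \widehat{\Lambda}$, and invoking Definition~\ref{dfn:Sfilters} and~\eqref{eq:Smin adjoints}. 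Unwinding those formulas, the left-hand side sends $e_U$ to $e_{\nu^*\cdot(\xi\cdot U)}$ when $s(\xi) = r(U)$ and $\nu \in \xi\cdot U$, and to $0$ otherwise; on the right the summand indexed by $\nu\alpha = \xi\beta$ sends $e_U$ to $e_{\alpha\cdot(\beta^*\cdot U)}$ when $\beta \in U$ (here $s(\alpha) = s(\beta)$ holds automatically, as $\nu\alpha = \xi\beta$), and to $0$ otherwise.

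Two observations then pin down the right-hand side. First, if $\beta$ and $\beta'$ are the second factors of two elements of $\MCE(\nu,\xi)$ both lying in $U$, then $\xi\beta,\xi\beta' \in \xi\cdot U$, and since each has $\nu$ as an initial segment the filter $\xi\cdot U$ contains both $\nu$ and $\xi$; the uniqueness statement following Definition~\ref{dfn:filters} then forces $\xi\beta = \xi\beta'$, hence $\beta = \beta'$, so at most one summand is nonzero on $e_U$. Second, such a $\beta \in U$ exists exactly when $s(\xi) = r(U)$ and $\nu \in \xi\cdot U$: if $\beta \in U$ then $r(U) = r(\beta) = s(\xi)$ and $\nu \preceq \nu\alpha = \xi\beta \in \xi\cdot U$, while conversely if $\nu \in \xi\cdot U$ then the unique element of $\MCE(\nu,\xi)$ lying in $\xi\cdot U$ has the form $\xi\beta$ with $\beta \in U$. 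Hence both sides vanish on exactly the same basis vectors.

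It remains to match values, i.e.\ to prove $\nu^*\cdot(\xi\cdot U) = \alpha\cdot(\beta^*\cdot U)$ whenever $\nu\alpha = \xi\beta \in \MCE(\nu,\xi)$ and $\beta \in U$. For this I would first record the easily-checked multiplicativity of the operations of Section~\ref{sec:filters}, namely $\lambda\cdot(\rho\cdot U) = (\lambda\rho)\cdot U$ when $s(\lambda) = r(\rho)$ and $r(U) = s(\rho)$. Writing $W := \xi\cdot U$ and using $\nu\alpha = \xi\beta$, this together with Lemma~\ref{action}(\ref{it:action-mutually inverse}) gives
\begin{align*}
\nu\cdot\bigl(\alpha\cdot(\beta^*\cdot U)\bigr)
  &= (\nu\alpha)\cdot(\beta^*\cdot U)
   = (\xi\beta)\cdot(\beta^*\cdot U)\\
  &= \xi\cdot\bigl(\beta\cdot(\beta^*\cdot U)\bigr)
   = \xi\cdot U = W,
\end{align*}
and applying $\nu^*\cdot(-)$ and Lemma~\ref{action}(\ref{it:action-mutually inverse}) once more yields $\alpha\cdot(\beta^*\cdot U) = \nu^*\cdot W$, completing the proof of the identity.

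Finally, for $(\mu,\nu),(\xi,\eta) \in \bal{\Lambda}$ I would compute
\begin{align*}
(\Sfilters_\mu\Sfilters_\nu^*)(\Sfilters_\xi\Sfilters_\eta^*)
  &= \Sfilters_\mu\bigl(\Sfilters_\nu^*\Sfilters_\xi\bigr)\Sfilters_\eta^*
   = \sum_{\nu\alpha = \xi\beta \in \MCE(\nu,\xi)} (\Sfilters_\mu\Sfilters_\alpha)(\Sfilters_\eta\Sfilters_\beta)^*\\
  &= \sum_{\nu\alpha = \xi\beta \in \MCE(\nu,\xi)} \Sfilters_{\mu\alpha}\Sfilters_{\eta\beta}^*,
\end{align*}
where the last step uses $\Sfilters_\lambda\Sfilters_\rho = \Sfilters_{\lambda\rho}$ (again an immediate consequence of multiplicativity of the operations $\lambda\cdot(-)$), valid here because $r(\alpha) = s(\nu) = s(\mu)$ and $r(\beta) = s(\xi) = s(\eta)$; one also checks, using that $(\mu,\nu)$ and $(\xi,\eta)$ are balanced, that each $(\mu\alpha,\eta\beta)$ lies in $\bal{\Lambda}$, so the right-hand side is exactly the sum required in~(B2). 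I expect the main obstacle to be the value-matching step --- the identification $\nu^*\cdot(\xi\cdot U) = \alpha\cdot(\beta^*\cdot U)$ --- with everything else being routine bookkeeping with the explicit formulas for the $\Sfilters_\lambda$.
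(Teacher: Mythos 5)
Your proof is correct. It differs from the paper's in the choice of intermediate identity: the paper first establishes the projection identity $\Sfilters_\nu \Sfilters^*_\nu \Sfilters_\xi \Sfilters^*_\xi = \sum_{\lambda \in \MCE(\nu,\xi)} \Sfilters_\lambda \Sfilters^*_\lambda$ (using that $\MCE(\nu,\xi)\cap U$ has at most one element, and is nonempty iff $\nu,\xi\in U$), inserts it into $\Sfilters_\mu\Sfilters^*_\nu\Sfilters_\xi\Sfilters^*_\eta$ via the partial-isometry relations, and then checks $\Sfilters_\mu\Sfilters^*_\nu\Sfilters_{\nu\alpha}e_U = \Sfilters_{\mu\alpha}e_U$ by the computation $\mu\cdot(\nu^*\cdot(\nu\alpha\cdot U)) = \mu\alpha\cdot U$. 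You instead prove the Nica-covariance identity $\Sfilters^*_\nu\Sfilters_\xi = \sum_{\nu\alpha=\xi\beta\in\MCE(\nu,\xi)}\Sfilters_\alpha\Sfilters^*_\beta$ directly on basis vectors and then multiply through by $\Sfilters_\mu$ and $\Sfilters^*_\eta$ using $\Sfilters_\lambda\Sfilters_\rho = \Sfilters_{\lambda\rho}$. The underlying mechanism is the same in both cases --- uniqueness of the $\MCE$ representative inside a filter, and multiplicativity of the operations $\lambda\cdot(-)$ and $\lambda^*\cdot(-)$ from Lemma~\ref{action} --- so the two arguments are close cousins; your version has the mild advantage of isolating the identity $\Sfilters^*_\nu\Sfilters_\xi = \sum\Sfilters_\alpha\Sfilters^*_\beta$, which is exactly the relation invoked later in Remark~\ref{rmk:spanning} for general representations of $\Lambda$, while the paper's version avoids having to argue separately that the two sides vanish on the same basis vectors by reducing everything to products of projections first. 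All the hypotheses you need ($s(\alpha)=s(\beta)$, $r(\alpha)=s(\nu)=s(\mu)$, $d(\mu\alpha)=d(\eta\beta)$, and $\beta\cdot(\beta^*\cdot U)=U$ for $\beta\in U$) do hold as you assert, so there is no gap.
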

\begin{proof}
Condition~(B1) is obvious. For~(B2), fix $(\mu,\nu), (\xi,\eta)
\in \bal{\Lambda}$. Then for $U \in \widehat{\Lambda}$, we have
\[
\Sfilters_\nu \Sfilters^*_\nu \Sfilters_\xi \Sfilters^*_\xi e_U
    =\begin{cases}
        e_U &\text{ if $\nu,\xi \in U$}\\
        0 &\text{ otherwise.}
    \end{cases}
\]
As discussed after Definition~\ref{dfn:filters}, $\MCE(\nu,\xi)
\cap U$ has at most one element, so
\[
\sum_{\nu\alpha = \xi\beta \in \MCE(\nu,\xi)} T_{\nu\alpha} T_{\nu\alpha}^* e_U
    =\begin{cases}
        e_U &\text{ if $\MCE(\nu,\xi) \cap U \not= \emptyset$}\\
        0 &\text{ otherwise.}
    \end{cases}
\]
Conditions (F1)~and~(F2) imply that $\MCE(\nu,\xi) \cap U \not=
\emptyset$ if and only if $\nu,\xi \in U$. So $\Sfilters_\nu
\Sfilters^*_\nu \Sfilters_\xi \Sfilters^*_\xi = \sum_{\lambda
\in \MCE(\nu,\xi)} \Sfilters_\lambda \Sfilters^*_\lambda$.

Since the $\Sfilters_\lambda$ are all partial isometries, it
follows that
\[
\Sfilters_\mu \Sfilters^*_\nu \Sfilters_\xi \Sfilters^*_\eta
    = \Sfilters_\mu \Sfilters^*_\nu \Sfilters_\nu \Sfilters^*_\nu \Sfilters_\xi \Sfilters^*_\xi \Sfilters_\xi \Sfilters^*_\eta
    = \sum_{\nu\alpha=\xi\beta \in \MCE(\nu,\xi)} \Sfilters_\mu \Sfilters^*_\nu \Sfilters_{\nu\alpha} \Sfilters^*_{\xi\beta} \Sfilters_\xi \Sfilters^*_\eta
\]
So it is enough to fix $U \in \widehat{\Lambda}$ and show that
for $\nu\alpha = \xi\beta \in \MCE(\nu,\xi)$, we have
$\Sfilters_\mu \Sfilters^*_\nu \Sfilters_{\nu\alpha}e_U =
\Sfilters_{\mu\alpha} e_U$ (it will follow from symmetry that
$\Sfilters^*_{\xi\beta}\Sfilters_\xi \Sfilters^*_\eta e_U =
\Sfilters^*_{\eta\beta} e_U$).  If $r(U) \not = s(\alpha)$ then
both sides are equal to zero, so suppose that $r(U) =
s(\alpha)$. Then
\begin{align*}
\mu \cdot (\nu^* \cdot (\nu\alpha \cdot U))
    &= \{\kappa \in \Lambda : \kappa \preceq \mu\zeta\text{ for some } \zeta \in \nu^* \cdot (\nu\alpha \cdot U)\} \\
    &= \{\kappa \in \Lambda : \kappa \preceq \mu\zeta\text{ for some $\zeta$ such that }\nu\zeta \in \nu\alpha \cdot U\} \\
    &= \{\kappa \in \Lambda : \kappa \preceq \mu\zeta\text{ for some } \zeta \in \alpha \cdot U\} \\
    &= \{\kappa \in \Lambda : \kappa \preceq \mu\alpha\zeta'\text{ for some } \zeta' \in U\} \\
    &= \mu\alpha \cdot U.
\end{align*}
Hence
\[
\Sfilters_\mu \Sfilters^*_\nu \Sfilters_{\nu\alpha}e_U
    = e_{\mu \cdot (\nu^* \cdot (\nu\alpha \cdot U))}
    = e_{\mu\alpha \cdot U}
    = \Sfilters_{\mu\alpha}e_U.
\]

So $(\mu,\nu) \mapsto \Sfilters_\mu \Sfilters^*_\nu$ satisfies
(B1)~and~(B2) as required.
\end{proof}

\begin{prop}\label{prp:Bb generators nonzero}
Let $(G,P)$ be a quasi-lattice ordered group, and let $\Lambda$
be a finitely aligned $P$-graph. There exists a $C^*$-algebra
$\BalAlg{\Lambda}$ generated by a representation $\omega$ of
$\bal{\Lambda}$ which is universal in the following sense: for
every representation $\tau$ of $\bal{\Lambda}$, there is a
$C^*$-homomorphism $\rho_\tau : \BalAlg{\Lambda} \to C^*(\tau)$
satisfying $\rho_\tau \circ \omega = \tau$. Moreover, the
partial isometries $\omega_{\mu,\nu}$ are all nonzero, and for
each $\mu \in \Lambda$ and each finite exhaustive set $E
\subset s(\mu)\Lambda \setminus \{s(\mu)\}$,
\[\textstyle
\prod_{\alpha\in E}(\omega_{\mu,\mu} - \omega_{\mu\alpha,\mu\alpha}) \not= 0.
\]
\end{prop}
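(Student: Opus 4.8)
The plan is to construct, for each finitely aligned $P$-graph, a single representation $\tau$ of $\bal{\Lambda}$ that simultaneously witnesses nonvanishing of every generator and of every product $\prod_{\alpha\in E}(\tau_{\mu,\mu}-\tau_{\mu\alpha,\mu\alpha})$, and then transport this back to $\BalAlg{\Lambda}$ via the universal property $\rho_\tau\circ\omega=\tau$. The natural candidate is the filter representation of Lemma~\ref{lem:filter repn}: set $\tau_{\mu,\nu}:=\Sfilters_\mu\Sfilters^*_\nu$ on $\ell^2(\widehat\Lambda)$. Since $\rho_\tau$ is a $*$-homomorphism, it suffices to show $\Sfilters_\mu\Sfilters^*_\nu\neq 0$ for all $(\mu,\nu)\in\bal{\Lambda}$ and that $\prod_{\alpha\in E}(\Sfilters_\mu\Sfilters^*_\mu-\Sfilters_{\mu\alpha}\Sfilters^*_{\mu\alpha})\neq 0$ for every $\mu$ and every finite exhaustive $E\subset s(\mu)\Lambda\setminus\{s(\mu)\}$.

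First, for the generators: by Lemma~\ref{lem:plenty of ultrafilters} pick an ultrafilter $U$ with $s(\mu)=s(\nu)\in U$, i.e.\ $r(U)=s(\mu)$. Then $\Sfilters_\nu^* e_U$ is not defined by \eqref{eq:Smin adjoints} unless $\nu\in U$, so instead observe $\Sfilters^*_\nu\Sfilters_\nu e_U = e_U$ since $r(U)=s(\nu)$, hence $\Sfilters_\mu\Sfilters^*_\nu(\Sfilters_\nu e_U)=\Sfilters_\mu e_U=e_{\mu\cdot U}\neq 0$; since $\Sfilters_\nu e_U=e_{\nu\cdot U}\ne 0$ this shows $\Sfilters_\mu\Sfilters^*_\nu\neq 0$. (Equivalently, $\Sfilters_\mu\Sfilters^*_\nu$ has initial projection $\Sfilters_\nu\Sfilters^*_\nu$, the projection onto $\clsp\{e_V:\nu\in V\}$, which is nonzero by Lemma~\ref{lem:plenty of ultrafilters}.)

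For the harder half, fix $\mu$ and a finite exhaustive $E\subset s(\mu)\Lambda\setminus\{s(\mu)\}$. Recall $\Sfilters_\mu\Sfilters^*_\mu$ is the projection onto $\clsp\{e_V: \mu\in V\}$ and, using the factorisation property, $\Sfilters_{\mu\alpha}\Sfilters^*_{\mu\alpha}$ is the projection onto $\clsp\{e_V:\mu\alpha\in V\}$, a subspace contained in the range of $\Sfilters_\mu\Sfilters^*_\mu$; moreover these projections all commute (they are spectral projections built from the commuting family $\{\tau_{\lambda,\lambda}\}$, cf.\ the remarks after the representation definition). Hence $\prod_{\alpha\in E}(\Sfilters_\mu\Sfilters^*_\mu-\Sfilters_{\mu\alpha}\Sfilters^*_{\mu\alpha})$ is the projection onto $\clsp\{e_U : \mu\in U,\ \mu\alpha\notin U\text{ for all }\alpha\in E\}$. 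So it is enough to exhibit one ultrafilter $U$ with $\mu\in U$ but $\mu\alpha\notin U$ for every $\alpha\in E$. This is exactly where finite exhaustiveness must be used \emph{against} us: Lemma~\ref{lem:ultrafilters-FEsets} says any ultrafilter containing $\mu$ \emph{does} contain some $\mu\alpha$ with $\alpha\in E$ — but that lemma requires $E$ to be a finite exhaustive subset of $s(\mu)\Lambda$, whereas here $E$ omits $s(\mu)$. The key point will be to produce a filter $U$ containing $\mu$ and avoiding all $\mu E$: concretely, since each $\alpha\in E$ has $d(\alpha)\neq e$, one checks that $U_0:=\{\zeta:\zeta\preceq\mu\}$ is a filter with $\mu\in U_0$, $r(U_0)=s(\mu)$ regarded appropriately, and $\mu\alpha\notin U_0$ for $\alpha\ne s(\mu)$ because $\mu\alpha\not\preceq\mu$. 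Enlarging $U_0$ to an ultrafilter $U$ by Zorn's lemma need not preserve ``$\mu\alpha\notin U$'', so the real work — and the expected main obstacle — is a careful maximality argument: run Zorn's lemma within the collection of filters $V$ with $\mu\in V$ and $\mu\alpha\notin V$ for all $\alpha\in E$, using that this collection is nonempty and closed under unions of chains, and then argue that a maximal such $V$ is in fact an ultrafilter of $\Lambda$ (if $V\subsetneq W$ for a filter $W$, then $W$ must contain some $\mu\alpha$ with $\alpha\in E$, but one shows $W\cap\{\text{those }\zeta\text{ not forcing any }\mu\alpha\}$ is still a filter strictly containing $V$ — contradiction), or alternatively directly verify maximality of $V$ among all filters by a factorisation-property argument analogous to the construction of the cofinal sequence $(\xi_n)$ in the proof of Lemma~\ref{lem:ultrafilters-FEsets}. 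Once such an ultrafilter $U$ is in hand, $e_U$ lies in the range of the product projection, which is therefore nonzero, and pulling back through $\rho_\tau$ completes the proof.
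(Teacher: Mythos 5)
Your overall strategy --- use the filter representation $\Sfilters$ on $\ell^2(\widehat\Lambda)$ from Lemma~\ref{lem:filter repn} and pull back through $\rho_\tau$ --- is exactly the paper's, and your argument that each $\Sfilters_\mu\Sfilters^*_\nu\neq 0$ is fine. But the second half contains a genuine error. You correctly identify the principal filter $U_0=\{\zeta:\zeta\preceq\mu\}$ as a filter containing $\mu$ with $\mu\alpha\notin U_0$ for all $\alpha\in E$, and at that point you are done: $\widehat\Lambda$ is the set of \emph{all} filters, not just ultrafilters, so $e_{U_0}$ is already a basis vector of $\ell^2(\widehat\Lambda)$, and $\prod_{\alpha\in E}(\Sfilters_\mu\Sfilters^*_\mu-\Sfilters_{\mu\alpha}\Sfilters^*_{\mu\alpha})e_{U_0}=e_{U_0}\neq 0$. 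This is precisely what the paper does. There is no need to enlarge $U_0$ to an ultrafilter.

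Worse, the enlargement you propose is impossible, so the detour is not merely unnecessary --- it would sink the proof if it were actually needed. Your stated reason for hoping Lemma~\ref{lem:ultrafilters-FEsets} does not apply is that ``$E$ omits $s(\mu)$,'' but that lemma only requires $E$ to be a finite exhaustive subset of $s(\mu)\Lambda$, and a finite exhaustive subset of $s(\mu)\Lambda\setminus\{s(\mu)\}$ is in particular a finite exhaustive subset of $s(\mu)\Lambda$ (exhaustiveness is a property of how $E$ meets $s(\mu)\Lambda$, not of whether $s(\mu)\in E$). Hence \emph{every} ultrafilter containing $\mu$ contains $\mu\alpha$ for some $\alpha\in E$, and the ultrafilter you seek does not exist. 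Consequently your Zorn's lemma argument must break at its final step: a filter that is maximal among those containing $\mu$ and avoiding $\mu E$ exists, but it cannot be an ultrafilter of $\Lambda$. (Indeed, this is the whole point of the passage from $\widehat\Lambda$ to $\widehat\Lambda_\infty$ later in the paper: restricting to ultrafilters is exactly what kills these product projections in $\Bmin{\Lambda}$, cf.~\eqref{eq:all gaps zero}.) Replace the ultrafilter search with the one-line observation that $e_{U_0}$ itself lies in the range of the product projection, and the proof is complete.
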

\begin{proof}
An argument along the lines of \cite[pages 12~and~13]{Raeburn2005} shows
that there is a $C^*$-algebra $\BalAlg{\Lambda}$ generated by a representation $\omega$ of $\bal{\Lambda}$ which is universal for representations of
$\bal{\Lambda}$.

To see that each $\omega_{\mu,\nu}$ is nonzero, observe that for
each $\lambda \in \Lambda$, there exists $U \in
\widehat{\Lambda}$ such that $\lambda \in U$, and hence the
partial isometries $\Sfilters_\lambda$ of
Definition~\ref{dfn:Sfilters} are all nonzero. It follows that
\begin{equation}\label{eq:balalg gens nonzero}
\Sfilters_\mu \Sfilters^*_\nu \not= 0 \qquad\text{ for all $(\mu,\nu) \in \bal{\Lambda}$.}
\end{equation}
By Lemma~\ref{lem:filter repn} and the universal property of $\BalAlg{\Lambda}$, there is a homomorphism which takes each
$\omega_{\mu,\nu}$ to $\Sfilters_\mu \Sfilters^*_\nu$, and it
follows that the $\omega_{\mu,\nu}$ are nonzero as well.

Fix $\mu \in \Lambda$ and a finite exhaustive set $E \subset
s(\mu)\Lambda \setminus \{s(\mu)\}$. Let $U_\mu := \{\lambda
\in \Lambda : \mu \in \lambda\Lambda\}$. Then $U_\mu \in
\widehat{\Lambda}$ and we have $\mu \in U_\mu$, but $\mu\alpha
\not\in U_\mu$ for all $\alpha \in E$. So $\Sfilters_\mu
\Sfilters^*_\mu e_{U_\mu} = e_{U_\mu}$, but
$\Sfilters_{\mu\alpha} \Sfilters^*_{\mu\alpha} e_{U_\mu} = 0$
for all $\alpha \in E$. Thus
\begin{equation}\label{eq:balalg gaps nonzero}
\prod_{\alpha\in E}(\Sfilters_\mu \Sfilters^*_\mu - \Sfilters_{\mu\alpha} \Sfilters^*_{\mu\alpha})e_{U_\mu} = e_{U_\mu} \not= 0.
\end{equation}
Lemma~\ref{lem:filter repn} and the first statement of this
proposition imply that there is a homomorphism taking
$\prod_{\alpha\in E}(\omega_{\mu,\mu}
-\omega_{\mu\alpha,\mu\alpha})$  to $\prod_{\alpha\in
E}(\Sfilters_\mu \Sfilters^*_\mu - \Sfilters_{\mu\alpha}
\Sfilters^*_{\mu\alpha})$, so $\prod_{\alpha\in
E}(\omega_{\mu,\mu} -\omega_{\mu\alpha,\mu\alpha})$ is nonzero
also.
\end{proof}

\begin{theorem}\label{thm:Bb uniqueness}
Let $(G,P)$ be a quasi-lattice ordered group, let $\Lambda$ be
a finitely aligned $P$-graph, and let $\tau$ be a
representation of $\bal{\Lambda}$. Then the homomorphism
$\rho_\tau : \BalAlg{\Lambda} \to C^*(\tau)$ induced by the
universal property of $\BalAlg{\Lambda}$ is injective if and
only if
\begin{enumerate}
\item\label{it:t lambda nonzero} $\tau_{\mu,\mu}\not=0$ for
    all $\mu\in\Lambda$, and
\item\label{it:gaps nonzero} $\prod_{\alpha\in
    E}(\tau_{\mu,\mu} - \tau_{\mu\alpha,\mu\alpha}) \not=
    0$ for each $\mu\in\Lambda$ and each finite exhaustive
    $E\subset s(\mu)\Lambda \setminus\{s(\mu)\}$.
\end{enumerate}
Moreover, $\BalAlg{\Lambda}$ is an AF algebra.
\end{theorem}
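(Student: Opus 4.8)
The plan is to exploit the filter representation $\Sfilters$ of Definition~\ref{dfn:Sfilters} as a concrete model, and to build an explicit matrix-unit structure inside $\BalAlg{\Lambda}$ that makes the AF statement and the uniqueness criterion transparent. The necessity of conditions~(\ref{it:t lambda nonzero})~and~(\ref{it:gaps nonzero}) is immediate: by Proposition~\ref{prp:Bb generators nonzero}, the elements $\omega_{\mu,\mu}$ and $\prod_{\alpha\in E}(\omega_{\mu,\mu} - \omega_{\mu\alpha,\mu\alpha})$ are all nonzero in $\BalAlg{\Lambda}$, so if $\rho_\tau$ is injective then their images $\tau_{\mu,\mu}$ and $\prod_{\alpha\in E}(\tau_{\mu,\mu} - \tau_{\mu\alpha,\mu\alpha})$ are nonzero as well. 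The substantial direction is sufficiency, and it is intertwined with the AF claim, so I would prove both together.

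First I would set up the combinatorial skeleton. For each finite subset $F$ of $\Lambda$ that is closed under $\preceq$-predecessors (a ``finite $\preceq$-closed'' set), I would consider the finite set of ``filter-like'' subsets of $F$ --- subsets $U\subseteq F$ satisfying the relativised versions of (F1)~and~(F2) within $F$ --- playing the role of a finite-dimensional analogue of $\widehat{\Lambda}$. Then I would produce, for each such $F$, a finite family of elements of $\BalAlg{\Lambda}$ built as products and differences of the $\omega_{\mu,\nu}$ (the natural candidates being, for filter-like $U,V$ in $F$ of the same degree-type, $e^F_{U,V} := \big(\text{something like}\big)\prod(\omega_{\mu,\mu}-\omega_{\mu\alpha,\mu\alpha})\cdot\omega_{\cdot,\cdot}\cdot\prod(\cdots)$) which I would show form a family of matrix units over the index set of filter-like subsets of $F$; here I would invoke Notation~\ref{ntn:compacts} to identify the subalgebra they generate with $\Kk$ of that index set. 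The relations~(B1)--(B2), together with the factorisation property, should give the matrix-unit relations~\eqref{eq:MU relations} after a bookkeeping argument. One checks that as $F$ ranges over an increasing exhausting sequence of finite $\preceq$-closed sets, the corresponding finite-dimensional subalgebras are nested (each $e^F_{U,V}$ expands as a sum of $e^{F'}_{U',V'}$ for $F'\supseteq F$, by a refinement argument using exhaustiveness), so their union is dense in $\BalAlg{\Lambda}$. This establishes that $\BalAlg{\Lambda}$ is AF.

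With the AF structure in hand, the uniqueness argument proceeds levelwise. Given $\tau$ satisfying~(\ref{it:t lambda nonzero})~and~(\ref{it:gaps nonzero}), I would show that $\rho_\tau$ restricted to each finite-dimensional subalgebra $\BalAlg{\Lambda}^F := C^*(\{e^F_{U,V}\})\cong\Kk_{X_F}$ is injective: a $*$-homomorphism out of a finite direct sum of matrix algebras is injective iff it is nonzero on each minimal central projection, and the minimal central projections of $\BalAlg{\Lambda}^F$ are (sums of) the diagonal matrix units $e^F_{U,U}$, which by the explicit formula are precisely products of the form $\prod_{\alpha\in E}(\omega_{\mu,\mu}-\omega_{\mu\alpha,\mu\alpha})$ (or $\omega_{\mu,\mu}$) up to manipulation --- so condition~(\ref{it:gaps nonzero}) and~(\ref{it:t lambda nonzero}) say exactly that $\rho_\tau$ is nonzero there. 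Injectivity on a dense union of subalgebras on which the map is isometric gives injectivity on $\BalAlg{\Lambda}$.

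The main obstacle I anticipate is getting the matrix-unit formulas and the refinement/nesting argument exactly right: one must choose the finite $\preceq$-closed sets $F$ so that the filter-like subsets correctly detect which $\omega_{\mu,\nu}$ are ``on'' at a given level, identify precisely which diagonal elements are the minimal projections (the subtlety being that $\omega_{\mu,\mu}$ itself is typically \emph{not} minimal --- it must be decomposed using a finite exhaustive set, which is exactly where hypothesis~(\ref{it:gaps nonzero}) with exhaustive $E$, rather than a single edge, enters), and verify that passing to a larger $F$ refines the decomposition compatibly. The filter representation $\Sfilters$ and the computations in Proposition~\ref{prp:Bb generators nonzero} provide both the intuition and a faithful model in which to check that these elements are nonzero and behave as matrix units; pulling that back via the universal property is what makes the abstract argument go through.
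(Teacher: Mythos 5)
Your necessity argument is exactly the paper's (it quotes Proposition~\ref{prp:Bb generators nonzero}), but for sufficiency and the AF claim you take a genuinely different route. The paper never exhibits $\BalAlg{\Lambda}$ as an increasing union of finite-dimensional subalgebras indexed by finite subsets of $\Lambda$; it filters by \emph{degree} instead. For each finite $\vee$-closed $F\subset P$ it forms $B_F=\sum_{p\in F}B_p$ with $B_p\cong\bigoplus_{v}\Kk_{\Lambda^pv}$ (so $B_F$ is AF but typically infinite-dimensional), shows $B_F$ is AF by induction on $|F|$ using that $B_{F\setminus\{m\}}$ is an ideal for $m$ minimal, and gets injectivity of $\rho_\tau$ on $B_F$ from a norm estimate (Lemma~\ref{lem:lower bound on norm}): compressing by a projection built from gap products over $\Ext(X_m;X\setminus X_m)$ annihilates every $a_p$ with $p\neq m$ and carries the minimal-degree part onto nonzero matrix units $\theta_{\mu,\nu}$, giving $\|a\|\ge\|a_m\|$, after which an $\varepsilon$-approximation closes the argument. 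Your plan --- explicit matrix units over finite subsets of $\Lambda$ closed under minimal common extensions, with gap products on the diagonal --- is essentially the classical treatment of the core of a finitely aligned $k$-graph algebra transplanted to $P$-graphs. It is viable and yields a more concrete picture of the AF structure, at the cost of substantially heavier combinatorics than the paper's degree-filtration.

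Two points in your sketch are genuine gaps rather than bookkeeping. First, the minimal diagonal projections of your $\BalAlg{\Lambda}^F$ are \emph{not} ``precisely'' products $\prod_{\alpha\in E}(\omega_{\mu,\mu}-\omega_{\mu\alpha,\mu\alpha})$ with $E$ exhaustive: the sets $E=\{\alpha\in s(\mu)\Lambda\setminus\{s(\mu)\}:\mu\alpha\in F\}$ that actually arise are arbitrary finite sets, and hypothesis~(\ref{it:gaps nonzero}) is silent about non-exhaustive $E$. You need the dichotomy the paper isolates in Lemma~\ref{Lem: matrix units}: if $E$ is exhaustive, apply~(\ref{it:gaps nonzero}); if not, choose $\eta\in s(\mu)\Lambda$ with $\MCE(\alpha,\eta)=\emptyset$ for all $\alpha\in E$, so that $\tau_{\mu\eta,\mu\eta}\prod_{\alpha\in E}(\tau_{\mu,\mu}-\tau_{\mu\alpha,\mu\alpha})=\tau_{\mu\eta,\mu\eta}\neq0$ by~(\ref{it:t lambda nonzero}). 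Without this case split the proof does not close. Second, ``finite $\preceq$-closed'' is not the right closure condition on $F$: to verify $e^F_{\mu,\nu}e^F_{\nu,\sigma}=e^F_{\mu,\sigma}$ you must pull the gap product $\prod_{\nu\alpha\in F}(\omega_{\nu,\nu}-\omega_{\nu\alpha,\nu\alpha})$ through $\omega_{\nu,\sigma}$, where it becomes the corresponding product at $\sigma$ over $\{\alpha:\nu\alpha\in F\}$; for this to agree with the gap product defining $e^F_{\nu,\sigma}$ you need $F$ closed under minimal common extensions \emph{and} under transporting extensions between paths of equal degree and source (if $\mu\alpha\in F$ and $(\mu,\nu)\in\bal{F}$ then $\nu\alpha\in F$), and you must check that such closures of finite sets remain finite (this is where finite alignment enters). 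These are exactly where the theorem's hypotheses do their work, so they cannot be deferred.
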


To prove the theorem, we first analyse the structure of
$\BalAlg{\Lambda}$. We require the notion of a $\vee$-closed
subset of $P$.

A subset $F$ of $P$ is called $\vee$-closed if, whenever $p,q \in F$ satisfy $p \vee q < \infty$,
we have $p \vee q \in F$. Since the $\vee$ operation is
both commutative and associative, given $G \subset P$ the
formula
\[
\vee G := p_1 \vee (p_2 \vee (p_3 \vee... \vee p_{|G|})) \in P \cup \{\infty\}
\]
is well-defined. So if $F \subset P$ is finite, then
\[
\overline{F} := \{\vee G : G \subset F, \vee G \not= \infty\}
\]
is a finite $\vee$-closed subset of $P$, which contains $F$
since $\vee\{p\} = \{p\}$ for all $p \in P$. It follows that
the collection of finite $\vee$-closed subsets of $P$ is
directed under $\subseteq$ and the union of all finite
$\vee$-closed subsets of $P$ is $P$ itself. A minimal element
of a finite $\vee$-closed subset $F$ of $P$ is an element $p
\in F$ such that $q \in F$ implies $q \not\le p$.

\begin{dfn}
For each $\vee$-closed subset $F$ of $P$, we define
\[
B_F := \Big\{\sum_{p\in F}a_p:a_p\in\clsp\{\omega_{\mu,\nu}:(\mu,\nu)\in\Lambda^p *_s \Lambda^p\}\text{ for each $p\in F$}\Big\},
\]
and for each $p \in P$, we write $B_p$ for $B_{\{p\}}$. So
$B_F$ consists of finite linear combinations of elements of the
$B_p$ where $p$ ranges over $F$.
\end{dfn}

For the following lemma, recall from Section~\ref{sec:prelims}
our notation for the abstract algebra $\Kk_X$ generated by
matrix units indexed by a countable set $X$.

\begin{lemma}\label{lem:Bb structure}
Let $(G,P)$ be a quasi-lattice ordered group, and let $\Lambda$
be a finitely aligned $P$-graph. Then
\begin{enumerate}
\item\label{it:B_p} for each $p \in P$ there is an
    isomorphism $B_p \cong \bigoplus_{v \in \Lambda^0}
    \Kk_{\Lambda^p v}$ satisfying $\omega_{\mu,\nu} \mapsto
    \Theta_{\mu,\nu}$;
\item\label{it:B_F closed} for each finite $\vee$-closed
    subset $F$ of $P$, the set $B_F$ is an AF
    $C^*$-subalgebra of $\BalAlg{\Lambda}$; and
\item\label{it:Bb directlim} $\BalAlg{\Lambda} = \varinjlim
    B_F$ where the collection of finite $\vee$-closed
    subsets of $P$ is directed by inclusion.
\end{enumerate}
\end{lemma}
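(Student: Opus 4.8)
The three claims build on one another, so I would address them in order (1), (2), (3).

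For \eqref{it:B_p}, the strategy is to exhibit $\{\omega_{\mu,\nu} : (\mu,\nu) \in \Lambda^p *_s \Lambda^p\}$ as a family of matrix units in the sense of Notation~\ref{ntn:compacts} and then invoke its universal property. First I would observe that $(\mu,\nu),(\xi,\eta) \in \Lambda^p *_s \Lambda^p$ forces $d(\mu)=d(\nu)=d(\xi)=d(\eta)=p$, so that $\MCE(\nu,\xi)$ is nonempty only when $\nu=\xi$ (since any common extension would have degree $p\vee p = p$, and the factorisation property then gives $\nu=\xi$), in which case $\MCE(\nu,\xi)=\{\nu\}$. Plugging this into (B1) and (B2) shows $\omega_{\mu,\nu}^* = \omega_{\nu,\mu}$ and $\omega_{\mu,\nu}\omega_{\xi,\eta} = \delta_{\nu,\xi}\,\omega_{\mu,\eta}$, which is exactly \eqref{eq:MU relations} on the index set $\Lambda^p v$ for each fixed $v$. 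Since $s(\mu)=s(\nu)$ is required and $\omega_{\mu,\nu}\omega_{\xi,\eta}=0$ whenever $s(\mu)\ne s(\xi)$, the algebra $B_p = \clsp\{\omega_{\mu,\nu}\}$ decomposes as the $c_0$-direct sum over $v \in \Lambda^0$ of the $C^*$-algebras generated by the matrix units indexed by $\Lambda^p v$; each summand is canonically $\Kk_{\Lambda^p v}$ by the uniqueness statement in Notation~\ref{ntn:compacts}. The only mild subtlety is confirming the direct sum is the $c_0$- (not $\ell^\infty$-) sum, which follows because a generic element of $B_p$ is a norm-limit of finite linear combinations of the $\omega_{\mu,\nu}$.

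For \eqref{it:B_F closed}, since $B_F = \sum_{p\in F} B_p$ is by definition a linear span, the content is that it is closed under multiplication and adjoints, hence a $*$-subalgebra, and that its closure is AF. Adjoints are immediate from (B1) since each $B_p$ is $*$-closed. For products, fix $(\mu,\nu)\in\Lambda^p*_s\Lambda^p$ and $(\xi,\eta)\in\Lambda^q*_s\Lambda^q$ with $p,q\in F$; by (B2), $\omega_{\mu,\nu}\omega_{\xi,\eta} = \sum_{\nu\alpha=\xi\beta\in\MCE(\nu,\xi)}\omega_{\mu\alpha,\eta\beta}$, and every term $\omega_{\mu\alpha,\eta\beta}$ has degree $d(\mu\alpha)=d(\eta\beta)=p\vee q$, which lies in $F$ because $F$ is $\vee$-closed (the sum is empty if $p\vee q=\infty$). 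Hence $B_pB_q \subset B_{p\vee q}\subset B_F$ when $p\vee q<\infty$ and $B_pB_q=\{0\}$ otherwise, so $B_F$ is a $*$-algebra; finite-dimensionality of each $B_p v$-block plus the fact that $F$ is finite makes $B_F$ a finite direct sum of matrix algebras tensored up — more precisely a direct limit of finite-dimensional algebras — hence AF. (One should note $B_F$ is automatically closed: $F$ being finite, $B_F = \bigoplus_{p\in F}B_p$ as Banach spaces in a compatible way, so it is complete.)

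For \eqref{it:Bb directlim}, the point is that the finite $\vee$-closed subsets of $P$ are directed under inclusion with union $P$ (established in the discussion preceding the Definition), so $\bigcup_F B_F$ is a $*$-subalgebra of $\BalAlg{\Lambda}$; it contains every generator $\omega_{\mu,\nu}$ (take $F = \overline{\{d(\mu)\}}$) and is therefore dense, giving $\BalAlg{\Lambda} = \overline{\bigcup_F B_F} = \varinjlim B_F$. \textbf{The main obstacle} I anticipate is in \eqref{it:B_p}: verifying that the abstract generators $\omega_{\mu,\nu}$ are \emph{linearly independent} enough that $B_p$ is genuinely $\bigoplus_v \Kk_{\Lambda^p v}$ rather than a proper quotient — i.e.\ that no unexpected relations collapse it. This is exactly where the faithfulness input from Proposition~\ref{prp:Bb generators nonzero} (via the filter representation $\Sfilters$) is needed: the $\Sfilters_\mu\Sfilters_\nu^*$ for $(\mu,\nu)\in\Lambda^p v$ are visibly honest matrix units on $\ell^2(\widehat\Lambda)$, so the universal-to-concrete homomorphism restricts to an isomorphism on each block, pinning down $B_p$ up to the claimed canonical form.
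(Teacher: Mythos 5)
Your treatment of parts (\ref{it:B_p}) and (\ref{it:Bb directlim}) is correct and is essentially the paper's argument: (B1)--(B2) plus the observation that $\MCE(\nu,\xi)=\delta_{\nu,\xi}\{\nu\}$ when $d(\nu)=d(\xi)=p$ exhibit $\{\omega_{\mu,\nu}:(\mu,\nu)\in\Lambda^pv\times\Lambda^pv\}$ as matrix units, nonzero by Proposition~\ref{prp:Bb generators nonzero}, and the uniqueness of $\Kk_{\Lambda^p v}$ does the rest; part~(\ref{it:Bb directlim}) is the same one-line density argument as in the paper. Your key multiplicative observation for part (\ref{it:B_F closed}), namely $B_pB_q\subset B_{p\vee q}$ (with $B_pB_q=\{0\}$ when $p\vee q=\infty$), is also correct and is exactly what the paper checks on spanning elements.

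However, part (\ref{it:B_F closed}) as you finish it has a genuine gap. First, ``finite-dimensionality of each $B_p v$-block'' is simply false in general: $\Lambda^p v$ may be infinite (infinite receivers are essential in the Spielberg examples of Section~6), so $\Kk_{\Lambda^p v}$ is the full algebra of compacts. More seriously, your parenthetical claim that $B_F$ is ``automatically closed'' because ``$B_F=\bigoplus_{p\in F}B_p$ as Banach spaces in a compatible way'' assumes precisely what must be proved. The subspaces $B_p$ are not mutually orthogonal --- your own computation shows they multiply into one another --- so no a priori direct-sum decomposition is available, and a finite sum of closed subspaces of a Banach space need not be closed. The paper closes this by induction on $|F|$: choose a minimal $m\in F$, note that $G:=F\setminus\{m\}$ is again $\vee$-closed and that minimality of $m$ forces $m\vee p\in G$ whenever $p\in G$ and $m\vee p<\infty$, whence $B_mB_G,\,B_GB_m\subset B_G$; then $B_F=B_m+B_G$ is a $C^*$-algebra because a $C^*$-subalgebra plus a closed ideal is closed (\cite[Corollary~1.8.4]{Dixmier1977}). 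AF-ness then follows not from any direct-sum picture but from the extension $0\to B_G\to B_F\to B_m/(B_m\cap B_G)\to 0$, using that quotients of AF algebras and extensions of AF by AF are AF. You need some version of this inductive ideal-plus-subalgebra argument (or an equally rigorous substitute) to make part (\ref{it:B_F closed}) stick.
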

\begin{proof}
For~(\ref{it:B_p}), one uses (B1)~and~(B2) to see that
$\{\omega_{\mu,\nu} : (\mu,\nu) \in \Lambda^p v \times
\Lambda^p v\}$ is a family of matrix units for each $v \in
\Lambda^0$. The $\omega_{\mu,\nu}$ are all nonzero by
Proposition~\ref{prp:Bb generators nonzero}. The uniqueness of
$\Kk_{\Lambda^p v}$ implies that $B_p(v) :=
\clsp\{\omega_{\mu,\nu} : (\mu,\nu) \in \Lambda^p v \times
\Lambda^p v\}$ is isomorphic to $\Kk_{\Lambda^p v}$ via
$\omega_{\mu,\nu} \mapsto \Theta_{\mu,\nu}$. Moreover,~(B2)
implies that if $\mu,\nu \in \Lambda^pv$ and $\xi,\eta \in
\Lambda^p w$ for distinct $v,w$, then $\omega_{\mu,\nu}
\omega_{\xi,\eta} = 0$. Hence $B_p = \bigoplus_{v \in
\Lambda^0} B_p(v) \cong \bigoplus_{v \in \Lambda^0}
\Kk_{\Lambda^p v}$.

For~(\ref{it:B_F closed}), we proceed by induction on $|F|$ as
in \cite[Lemma~3.6]{pp_CLSV2009}. When $|F| = 1$, statement (2) follows
from~(\ref{it:B_p}). Now suppose that $B_F$ is an AF
$C^*$-subalgebra of $\BalAlg{\Lambda}$ whenever $|F| \le k$,
and fix a $\vee$-closed subset $F$ of $P$ with $|F| = k+1$. Fix
a minimal element $m \in F$. Then $G := F \setminus \{m\}$ is
also $\vee$-closed. By the inductive hypothesis, $B_G$ is an AF
$C^*$-algebra. Moreover one can check on spanning elements
using~(B2) that $B_m B_G, B_G B_m \subset B_G$. Hence
\cite[Corollary~1.8.4]{Dixmier1977} implies that $B_F$ is a
$C^*$-algebra. To see that $B_F$ is AF, observe that $B_G$ is
an ideal of $B_F$ with quotient $B_F/B_G \cong B_{m}/(B_{m}
\cap B_G)$. Both $B_G$ and $B_m$ are AF by the inductive
hypothesis, and since quotients of AF algebras are AF, it
follows that $B_F$ is an extension of an AF algebra by an AF
algebra, and hence itself AF (see, for example,
\cite[Theorem~III.6.3]{Davidson1996}).

For~(\ref{it:Bb directlim}), observe that if $G \subset F$
are both $\vee$-closed, then $B_G \subset B_F$, and that
$\bigcup_F B_F$ contains all the generators of
$\BalAlg{\Lambda}$.
\end{proof}

We now establish two technical results which we shall use to
prove Theorem~\ref{thm:Bb uniqueness}.

\begin{lemma}\label{Lem: matrix units}
Let $(G,P)$ be a quasi-lattice ordered group, and let $\Lambda$
be a finitely aligned $P$-graph. Let $\tau$ be representation
of $\bal{\Lambda}$. Fix $v \in \Lambda^0$ and a finite subset
$H \subset v\Lambda\setminus\{v\}$. For $\mu,\nu \in \Lambda^p
v$, let
\[
\theta_{\mu,\nu} := \tau_{\mu,\nu}\prod_{\lambda \in H} (\tau_{\nu,\nu} - \tau_{\nu\lambda,\nu\lambda}).
\]
Then for $\mu,\nu, \rho,\sigma \in \Lambda^p v$,
\[
\theta^*_{\mu,\nu} = \theta_{\nu,\mu}\quad\text{ and }\quad\theta_{\mu,\nu}\theta_{\rho,\sigma} = \delta_{\nu,\rho} \theta_{\mu,\sigma}.
\]
In particular, if $\tau$ satisfies conditions \textup{(\ref{it:t lambda
nonzero})}~and~\textup{(\ref{it:gaps nonzero})} of Theorem~\textup{\ref{thm:Bb
uniqueness}}, then $\{\theta_{\mu,\nu} : \mu,\nu \in \Lambda^p
v\}$ is a family of nonzero matrix units.
\end{lemma}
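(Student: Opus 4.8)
The plan is to deduce the matrix‑unit relations directly from (B1) and (B2), writing $Q_\nu := \prod_{\lambda\in H}(\tau_{\nu,\nu}-\tau_{\nu\lambda,\nu\lambda})$ for $\nu\in\Lambda^p v$, so that $\theta_{\mu,\nu}=\tau_{\mu,\nu}Q_\nu$. First I would assemble the bookkeeping. As observed after the definition of a representation, each $\tau_{\lambda,\lambda}$ is a projection and the $\tau_{\lambda,\lambda}$ pairwise commute; and applying (B2) with $\MCE(\nu,\nu\lambda)=\{\nu\lambda\}$ gives $\tau_{\nu,\nu}\tau_{\nu\lambda,\nu\lambda}=\tau_{\nu\lambda,\nu\lambda}$, so each factor $\tau_{\nu,\nu}-\tau_{\nu\lambda,\nu\lambda}$ is a projection dominated by $\tau_{\nu,\nu}$. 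Since the factors commute, $Q_\nu$ is a projection with $\tau_{\nu,\nu}Q_\nu=Q_\nu=Q_\nu\tau_{\nu,\nu}$. I would also record, all via (B2) and the factorisation property (by identifying the relevant $\MCE$‑sets): for $\mu,\nu,\sigma\in\Lambda^p v$, $\tau_{\mu,\nu}\tau_{\nu,\sigma}=\tau_{\mu,\sigma}$ (from $\MCE(\nu,\nu)=\{\nu\}$); for $\mu,\rho\in\Lambda^p v$ and $(\rho,\sigma)\in\bal{\Lambda}$, $\tau_{\mu,\mu}\tau_{\rho,\sigma}=\delta_{\mu,\rho}\tau_{\mu,\sigma}$ (two paths of degree $p$ have no common extension unless equal); and $\tau_{\nu\lambda,\nu\lambda}\tau_{\nu,\mu}=\tau_{\nu\lambda,\mu\lambda}=\tau_{\nu,\mu}\tau_{\mu\lambda,\mu\lambda}$ (from $\MCE(\nu\lambda,\nu)=\{\nu\lambda\}$ and $\MCE(\mu,\mu\lambda)=\{\mu\lambda\}$).

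The crucial intermediate step is the intertwining identity $Q_\nu\tau_{\nu,\mu}=\tau_{\nu,\mu}Q_\mu$ for $\mu,\nu\in\Lambda^p v$. The facts above give $(\tau_{\nu,\nu}-\tau_{\nu\lambda,\nu\lambda})\tau_{\nu,\mu}=\tau_{\nu,\mu}-\tau_{\nu\lambda,\mu\lambda}=\tau_{\nu,\mu}(\tau_{\mu,\mu}-\tau_{\mu\lambda,\mu\lambda})$ for each $\lambda\in H$, and I would then push $\tau_{\nu,\mu}$ leftward through the commuting factors of $Q_\nu$ one at a time. Given this, the adjoint relation is immediate: $\theta_{\mu,\nu}^*=Q_\nu^*\tau_{\mu,\nu}^*=Q_\nu\tau_{\nu,\mu}=\tau_{\nu,\mu}Q_\mu=\theta_{\nu,\mu}$. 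For the product, write $\theta_{\mu,\nu}\theta_{\rho,\sigma}=\tau_{\mu,\nu}Q_\nu\tau_{\rho,\sigma}Q_\sigma$. If $\nu\ne\rho$ then $Q_\nu\tau_{\rho,\sigma}=Q_\nu\tau_{\nu,\nu}\tau_{\rho,\sigma}=0$ since $\tau_{\nu,\nu}\tau_{\rho,\sigma}=\delta_{\nu,\rho}\tau_{\nu,\sigma}=0$. If $\nu=\rho$, the intertwining identity moves $Q_\nu$ past $\tau_{\nu,\sigma}$, and then $\tau_{\mu,\nu}\tau_{\nu,\sigma}=\tau_{\mu,\sigma}$ together with $Q_\sigma^2=Q_\sigma$ give $\theta_{\mu,\nu}\theta_{\nu,\sigma}=\tau_{\mu,\sigma}Q_\sigma=\theta_{\mu,\sigma}$. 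Hence $\theta_{\mu,\nu}\theta_{\rho,\sigma}=\delta_{\nu,\rho}\theta_{\mu,\sigma}$, so $\{\theta_{\mu,\nu}:\mu,\nu\in\Lambda^p v\}$ is a family of matrix units over $\Lambda^p v$.

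It remains to see that, when $\tau$ satisfies conditions (1) and (2) of Theorem~\ref{thm:Bb uniqueness}, every $\theta_{\mu,\nu}$ is nonzero. Since $\theta_{\mu,\nu}^*\theta_{\mu,\nu}=\theta_{\nu,\mu}\theta_{\mu,\nu}=\theta_{\nu,\nu}=\tau_{\nu,\nu}Q_\nu=Q_\nu$, it suffices to show $Q_\nu\ne 0$. If $H$ is exhaustive this is exactly condition (2) applied with the path taken to be $\nu$ and the finite exhaustive set taken to be $H$ (note $H\subset v\Lambda\setminus\{v\}=s(\nu)\Lambda\setminus\{s(\nu)\}$). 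If $H$ is not exhaustive — the case that (2) does not cover — I would choose $\mu_0\in v\Lambda\setminus\{v\}$ with $\MCE(\mu_0,\lambda)=\emptyset$ for all $\lambda\in H$; then $\MCE(\nu\mu_0,\nu\lambda)=\nu\,\MCE(\mu_0,\lambda)=\emptyset$ and $\MCE(\nu\mu_0,\nu)=\{\nu\mu_0\}$, so (B2) gives $\tau_{\nu\mu_0,\nu\mu_0}(\tau_{\nu,\nu}-\tau_{\nu\lambda,\nu\lambda})=\tau_{\nu\mu_0,\nu\mu_0}$ for each $\lambda\in H$, whence $\tau_{\nu\mu_0,\nu\mu_0}Q_\nu=\tau_{\nu\mu_0,\nu\mu_0}$; this is nonzero by condition (1), so $Q_\nu\ne 0$ (the case $H=\emptyset$ is trivial, since then $\theta_{\mu,\nu}=\tau_{\mu,\nu}$ and $\theta_{\mu,\nu}^*\theta_{\mu,\nu}=\tau_{\nu,\nu}\ne0$). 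The main point to be careful about is precisely this dichotomy: condition (2) is stated only for finite \emph{exhaustive} subsets, so the non‑exhaustive case must be routed through condition (1) as above; everything else is a careful but routine unwinding of (B2) together with the commutativity of the diagonal projections.
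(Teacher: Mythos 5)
Your proposal is correct and follows essentially the same route as the paper: the key intertwining identity $Q_\nu\tau_{\nu,\mu}=\tau_{\nu,\mu}Q_\mu$ is exactly the paper's equation~\eqref{eq: pull tau through}, the matrix-unit relations are derived from it via (B1) and (B2) in the same way, and the nonvanishing argument uses the same dichotomy (condition~(\ref{it:gaps nonzero}) when $H$ is exhaustive, and a path $\eta$ with $\MCE(\lambda,\eta)=\emptyset$ for all $\lambda\in H$ together with condition~(\ref{it:t lambda nonzero}) otherwise). Your explicit handling of the $H=\emptyset$ case and the reduction to $Q_\nu\ne 0$ via $\theta_{\mu,\nu}^*\theta_{\mu,\nu}=Q_\nu$ are minor elaborations of details the paper leaves implicit.
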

\begin{proof}
For each $\mu,\nu\in\Lambda^pv$ we have
$(\tau_{\nu,\nu}-\tau_{\nu\lambda,\nu\lambda})\tau_{\nu,\mu}=\tau_{\nu,\mu}(\tau_{\mu,\mu}-\tau_{\mu\lambda,\mu\lambda})$,
and hence
\begin{equation}\label{eq: pull tau through}
\Big(\prod_{\lambda\in H}(\tau_{\nu,\nu}-\tau_{\nu\lambda,\nu\lambda})\Big)\tau_{\nu,\mu}
    = \tau_{\nu,\mu}\prod_{\lambda\in H}(\tau_{\mu,\mu}-\tau_{\mu\lambda,\mu\lambda}).
\end{equation}
It now follows from (B1)~and~(\ref{eq: pull tau through}) that
$\theta_{\mu,\nu}^*=\theta_{\nu,\mu}$. It follows from
(B2)~and~(\ref{eq: pull tau through}) that for
$\rho,\sigma\in\Lambda^pv$ we have
\[
\theta_{\mu,\nu}\theta_{\rho,\sigma}=
\begin{cases}
\big(\prod_{\lambda\in H}(\tau_{\mu,\mu}-\tau_{\mu\lambda,\mu\lambda})\big)
    \tau_{\mu,\sigma}\big(\prod_{\eta\in H}(\tau_{\sigma,\sigma}-\tau_{\sigma\eta,\sigma\eta})\big) & \text{if $\nu=\rho$,}\\
0 & \text{otherwise}.
\end{cases}
\]
Thus (\ref{eq: pull tau through}) for
$\mu,\sigma\in\Lambda^p v$ and that $\prod_{\eta\in
H}(\tau_{\sigma,\sigma}-\tau_{\sigma\eta,\sigma\eta})$ is a
projection imply that $\theta_{\mu,\nu}\theta_{\rho,\sigma} = \delta_{\nu,\rho} \theta_{\mu,\sigma}$. Hence $\{\theta_{\mu,\nu}:\mu,\nu\in\Lambda^pv\}$ is a family of matrix units.

Now suppose that $\tau$ satisfies conditions (\ref{it:t lambda
nonzero})~and~(\ref{it:gaps nonzero}) of Theorem~\ref{thm:Bb
uniqueness}. It suffices to show $\theta_{\mu,\mu}\not=0$ for
$\mu\in\Lambda^pv$. If $H$ is exhaustive, then in particular
$H\not=\emptyset$, and $\theta_{\mu,\mu}=\prod_{\lambda\in
H}(\tau_{\mu,\mu}-\tau_{\mu\lambda,\mu\lambda})\not= 0$ by
assumption. If $H$ is not exhaustive, then there exists
$\eta\in v\Lambda$ with $\MCE(\lambda,\eta)=\emptyset$ for all
$\lambda\in H$. It follows that
$\tau_{\mu\eta,\mu\eta}\theta_{\mu,\mu}=\tau_{\mu\eta,\mu\eta}\not=0$.
Hence $\theta_{\mu,\mu}\not=0$.
\end{proof}

In the proof of the next lemma we need some notation from
\cite{FMY2005}. Given subsets $U$ and $V$ of a finitely aligned
$P$-graph $\Lambda$, we write $\Ext(U;V)$ for the set
\[
\{\alpha \in \Lambda :\text{ there exist }\mu \in U\text{ and }\nu \in V\text{ such that }\mu\alpha \in \MCE(\mu,\nu)\}.
\]
Roughly speaking, $\Ext(U;V)$ is the set of tails which extend
paths in $U$ to minimal common extensions with paths in $V$.
Since $\Lambda$ is finitely aligned, if $U$ and $V$ are finite,
then so is $\Ext(U;V)$.

\begin{lemma}\label{lem:lower bound on norm}
Let $(G,P)$ be a quasi-lattice ordered group, and let $\Lambda$
be a finitely aligned $P$-graph. Let $\tau$ be a representation
of $\bal{\Lambda}$ which satisfies conditions \textup{(\ref{it:t lambda
nonzero})}~and~\textup{(\ref{it:gaps nonzero})} of Theorem~\textup{\ref{thm:Bb
uniqueness}}. Let $F$ be a finite $\vee$-closed subset of $P$
and let $m$ be a minimal element of $F$. For each $p \in F$,
let $X_p$ be a finite subset of $\Lambda^p$, and let $X =
\bigcup_{p \in F} X_p$. Fix scalars $\{a_{\mu,\nu} : (\mu,\nu)
\in \bal{X}\}$. Then
\[
\Big\| \sum_{(\mu,\nu) \in \bal{X}} a_{\mu,\nu} \tau_{\mu,\nu} \Big\|
    \ge \Big\| \sum_{(\mu,\nu) \in X_m *_s X_m} a_{\mu,\nu} \tau_{\mu,\nu} \Big\|.
\]
\end{lemma}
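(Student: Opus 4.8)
The argument is a ``compression to the core'' argument in the spirit of the Cuntz--Krieger uniqueness theorem: the plan is to bound $b := \sum_{(\mu,\nu)\in\bal{X}} a_{\mu,\nu}\tau_{\mu,\nu}$ from below by compressing it by a projection $Q$ for which $QbQ$ equals the ``$X_m$-part'' of $b$ rewritten in a suitable family of matrix units. Since $Q$ will be a projection, $\|b\|\ge\|QbQ\|$ is automatic, so all the work is in choosing $Q$ and then identifying $\|QbQ\|$ with the right-hand side.

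After discarding the trivial case $X_m=\emptyset$, I would build $Q$ from the matrix units of Lemma~\ref{Lem: matrix units}. For each vertex $v$ that is the source of some element of $X_m$, the crucial choice is the cut-down set of tails
\[
H_v := \bigcup_{p\in F\setminus\{m\}}\Ext(X_m v;\,X_p),
\]
where $X_p v=\{\mu\in X_p:s(\mu)=v\}$ and $\Ext$ is the notation recalled just before the statement. Finite alignment makes each $\Ext(X_m v;X_p)$, hence $H_v$, finite; and since $m$ is minimal in $F$, no $p\in F\setminus\{m\}$ satisfies $p\le m$, so $m\vee p\ne m$ and every $\alpha\in\Ext(X_m v;X_p)$ has degree $d(\alpha)=m^{-1}(m\vee p)\ne e$, giving $H_v\subseteq v\Lambda\setminus\{v\}$. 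Hence Lemma~\ref{Lem: matrix units} applies (with $p=m$, $H=H_v$) and supplies a family $\{\theta_{\rho,\sigma}=\tau_{\rho,\sigma}\prod_{\lambda\in H_v}(\tau_{\sigma,\sigma}-\tau_{\sigma\lambda,\sigma\lambda}):\rho,\sigma\in X_m v\}$ of nonzero matrix units. Because each $\theta_{\rho,\rho}\le\tau_{\rho,\rho}$ and $\tau_{\rho,\rho}\tau_{\rho',\rho'}=0$ for distinct $\rho,\rho'\in\Lambda^m$ (as $\MCE(\rho,\rho')=\emptyset$), the $\theta_{\rho,\rho}$ are pairwise orthogonal and $Q:=\sum_{\rho\in X_m}\theta_{\rho,\rho}$ is a projection.

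The heart of the matter is to check that $\theta_{\rho,\rho}\,\tau_{\mu,\nu}=0$ whenever $\rho\in X_m$ and $(\mu,\nu)\in\bal{X}$ with $d(\mu)\ne m$. Expanding $\theta_{\rho,\rho}\tau_{\mu,\nu}=\theta_{\rho,\rho}\tau_{\rho,\rho}\tau_{\mu,\nu}=\sum_{\rho\alpha=\mu\beta\in\MCE(\rho,\mu)}\theta_{\rho,\rho}\tau_{\rho\alpha,\nu\beta}$ using (B2), each tail $\alpha$ appearing lies in $\Ext(X_m\,s(\rho);X_{d(\mu)})\subseteq H_{s(\rho)}$, so $\theta_{\rho,\rho}\tau_{\rho\alpha,\rho\alpha}=0$ and every summand vanishes. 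For $(\mu,\nu)\in X_m*_s X_m$, on the other hand, (B1), (B2) and the pull-through identity from the proof of Lemma~\ref{Lem: matrix units} give $\theta_{\rho,\rho}\tau_{\mu,\nu}\theta_{\sigma,\sigma}=\delta_{\rho,\mu}\delta_{\sigma,\nu}\theta_{\mu,\nu}$. Since $\bal{X}=\bigsqcup_{p\in F}(X_p*_s X_p)$ and the $p\ne m$ blocks are killed by left multiplication by $Q$, these two computations combine to $QbQ=\sum_{(\mu,\nu)\in X_m*_s X_m}a_{\mu,\nu}\theta_{\mu,\nu}$.

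To finish, I would note that $\{\tau_{\mu,\nu}:\mu,\nu\in X_m v\}$ is likewise a family of matrix units over $X_m v$, nonzero by condition~(\ref{it:t lambda nonzero}) of Theorem~\ref{thm:Bb uniqueness}, so the uniqueness of $\Kk_{X_m v}$ (Notation~\ref{ntn:compacts}) forces $\|\sum a_{\mu,\nu}\theta_{\mu,\nu}\|=\|\sum a_{\mu,\nu}\tau_{\mu,\nu}\|$ on each vertex block; the blocks for distinct $v$ are mutually orthogonal in both pictures, so summing over $v$ gives $\|QbQ\|=\|\sum_{(\mu,\nu)\in X_m*_s X_m}a_{\mu,\nu}\tau_{\mu,\nu}\|$, and with $\|b\|\ge\|QbQ\|$ this is the claimed inequality. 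The one genuinely delicate point is pinning down $H_v$: it must be large enough to force $\theta_{\rho,\rho}\tau_{\mu,\nu}=0$ for all $(\mu,\nu)$ of degree different from $m$, yet small enough (in particular contained in $v\Lambda\setminus\{v\}$) for Lemma~\ref{Lem: matrix units} to apply, and it is precisely the minimality of $m$ in $F$ that reconciles these two demands.
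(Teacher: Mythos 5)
Your proof is correct and follows essentially the same route as the paper's: compress by the projection $Q=\sum_{\rho\in X_m}\prod_{\lambda\in s(\rho)\Ext(X_m;X\setminus X_m)}(\tau_{\rho,\rho}-\tau_{\rho\lambda,\rho\lambda})$, observe that this kills the $X_p$-blocks for $p\ne m$ while turning the $X_m$-block into the nonzero matrix units $\theta_{\mu,\nu}$ of Lemma~\ref{Lem: matrix units}, and invoke uniqueness of $\Kk_{X_m v}$ to recover the norm. The only differences are cosmetic (two-sided rather than one-sided compression, per-vertex rather than global $\Ext$-sets), and your explicit verification that minimality of $m$ forces $H_v\subseteq v\Lambda\setminus\{v\}$ is a detail the paper leaves implicit.
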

\begin{proof}
For each $p \in F$, let $a_p := \sum_{(\mu,\nu) \in X_p *_s
X_p} a_{\mu,\nu} \tau_{\mu,\nu}$, and let $a := \sum_{p \in F}
a_p$. We must prove that $\|a\|\ge \|a_m\|$. If $\|a_m\|=0$, then the
result is trivial. So assume $\|a_m\|>0$, and in particular
$X_m\not=\emptyset$. Define
\[
H:=\Ext(X_m;X\setminus X_m)\quad\text{and}\quad
Q:=\sum_{\mu\in X_m}\Big(\prod_{\lambda\in s(\mu)H}(\tau_{\mu,\mu}-\tau_{\mu\lambda,\mu\lambda})\Big).
\]
Then $Q$ is a projection, and with $\theta_{\mu,\nu}$ defined
as in Lemma~\ref{Lem: matrix units},
\[
a_mQ = \sum_{(\mu,\nu)\in X_m*_sX_m}a_{\mu,\nu}\tau_{\mu,\nu}Q
     = \sum_{(\mu,\nu)\in X_m*_sX_m}a_{\mu,\nu}\tau_{\mu,\nu}\theta_{\nu,\nu}
     = \sum_{(\mu,\nu)\in X_m*_sX_m}a_{\mu,\nu}\theta_{\mu,\nu}.
\]
We claim that $a_pQ=0$ for all $p\in F\setminus\{m\}$. To see
this, fix $p\in F\setminus\{m\}$ and $\rho,\sigma\in X_p$. If
$\MCE(\mu,\sigma)=\emptyset$ for all $\mu\in X_m$, then
$\tau_{\rho,\sigma}Q=0$. If there exists $\mu\in X_m$ with
$\MCE(\mu,\sigma)\not=\emptyset$, then there exists $\lambda\in
H$ with $\mu\lambda=\sigma$. So
$\tau_{\rho,\sigma}(\tau_{\mu,\mu}-\tau_{\mu\lambda,\mu\lambda})=0$,
and it follows that $\tau_{\rho,\sigma}Q=0$. This proves the claim.

It follows from (B1)~and~(B2) that
$\{\tau_{\mu,\nu}:(\mu,\nu)\in X_m *_s X_m\}$ is a family of
matrix units. They are nonzero because each
$\tau_{\mu,\mu}\not=0$. By Lemma~\ref{Lem: matrix units}, the
set $\{\theta_{\mu,\nu}:(\mu,\nu)\in X_m *_s X_m\}$ is also a
family of nonzero matrix units, so as in
Notation~\ref{ntn:compacts},
$\theta_{\mu,\nu}\mapsto\tau_{\mu,\nu}$ determines an
isomorphism $\clsp\{\theta_{\mu,\nu}:(\mu,\nu)\in
X_m*_sX_m\}\to \clsp\{\tau_{\mu,\nu}:(\mu,\nu)\in X_m*_sX_m\}$.
It follows that
\[
\|a\|\ge \|aQ\|
    =\|a_mQ\|
    =\Big\|\sum_{(\mu,\nu)\in X_m*X_m}a_{\mu,\nu}\theta_{\mu,\nu}\Big\|
    =\Big\| \sum_{(\mu,\nu) \in X_m *_s X_m} a_{\mu,\nu} \tau_{\mu,\nu} \Big\|,
\]
completing the proof.
\end{proof}

\begin{proof}[Proof of Theorem~\ref{thm:Bb uniqueness}]
The ``only if" statement follows from Proposition~\ref{prp:Bb
generators nonzero}. For the ``if" statement, suppose that
$\tau$ satisfies (\ref{it:t lambda nonzero})~and~(\ref{it:gaps
nonzero}).

Fix a nonempty finite $\vee$-closed subset $F$ of $P$, and let
$m$ be a minimal element of $F$. For each $p \in F$, fix $b_p
\in B_p$, and let $b = \sum_{p\in F}b_p$. Suppose $b \not= 0$;
we will show that $\rho_\tau(b) \not= 0$.

For each $p\in F$ and each finite subset $X\subseteq\Lambda^p$,
let $Q_X$ denote the projection $\sum_{\lambda\in
X}\omega_{\lambda,\lambda}$. By Lemma~\ref{lem:Bb
structure}(\ref{it:B_p}) and Notation~\ref{ntn:compacts}, we
have
\[
b_p=\lim_{X\subseteq\Lambda^p,|X|<\infty}Q_X b_p Q_X.
\]

For each $p\in F$ fix a finite set $X_p \subset \Lambda^p$ such
that
\[
\|b_p-Q_{X_p}b_pQ_{X_p}\|\le\frac{\|b_m\|}{4|F|}.
\]
In particular, $\|Q_{X_m}b_mQ_{X_m}\| \ge 3\|b_m\|/4$.

For each $p\in F$ we have $Q_{X_p}b_pQ_{X_p} \in
\lsp\{\omega_{\mu,\nu}:\mu,\nu\in\Lambda^p\}$. Lemma~\ref{lem:lower
bound on norm} therefore implies that $\|\rho_\tau(\sum_{p\in
F}Q_{X_p}b_pQ_{X_p})\|\ge\|Q_{X_m} b_m Q_{X_m}\|$. We now have
\begin{align*}
\|\rho_\tau(b)\|
    &= \Big\|\rho_\tau(b)-\rho_\tau\Big(\sum_{p\in F}Q_{X_p}b_pQ_{X_p}\Big)+\rho_\tau\Big(\sum_{p\in F}Q_{X_p}b_pQ_{X_p}\Big)\Big\|\\
    &\ge \Big\|\rho_\tau\Big(\sum_{p\in F}Q_{X_p}b_pQ_{X_p}\Big)\Big\|
        - \Big\|\rho_\tau(b)-\rho_\tau\Big(\sum_{p\in F}Q_{X_p}b_pQ_{X_p}\Big)\Big\|\\
    &\ge \Big\|\rho_\tau\Big(\sum_{p\in F}Q_{X_p}b_pQ_{X_p}\Big)\Big\|
        - \sum_{p\in F}\Big\|\rho_\tau(b_p)-\rho_\tau(Q_{X_p}b_pQ_{X_p})\Big\|\\
    &\ge \|Q_{X_m} b_m Q_{X_m}\|-\sum_{p\in F}\frac{\|b_m\|}{4|F|}\\
    &\ge \frac{3\|b_m\|}{4} - \frac{\|b_m\|}{4} \\
    &> 0.
\end{align*}
Hence $\rho_\tau(b)\not=0$, and it follows that $\rho_\tau$ is
injective on $B_F$. It now follows from Lemma~\ref{lem:Bb
structure}(\ref{it:Bb directlim}) that $\rho_\tau$ is injective
on $\bal{\Lambda}$; and Lemma~\ref{lem:Bb structure}(\ref{it:Bb
directlim})~and~(\ref{it:B_F closed}) show that $\bal{\Lambda}$
is AF because direct limits of AF algebras are also AF (see,
for example, \cite[Theorem~III.3.4]{Davidson1996}).
\end{proof}

\subsection{Ideals of the balanced algebra}\label{subsec:bal
ideals}

In this subsection we prove our key technical result,
Theorem~\ref{thm:ideals in core}. This theorem identifies generating elements for any ideal $I\triangleleft\BalAlg{\Lambda}$
which contains none of the generators $\omega_{\mu,\nu}$. We use Theorem~\ref{thm:ideals in core} in
the next subsection to see that there is a unique largest ideal containing no $\omega_{\mu,\nu}$. The corresponding quotient of $\BalAlg{\Lambda}$ is the desired
co-universal balanced algebra of $\Lambda$.

\begin{theorem}\label{thm:ideals in core}
Let $(G,P)$ be a quasi-lattice ordered group, and let $\Lambda$
be a finitely aligned $P$-graph. Let $\tau$ be a representation
of $\bal{\Lambda}$ such that each $\tau_{\mu,\mu}$ is nonzero,
and let $\rho_\tau : \BalAlg{\Lambda} \to C^*(\tau)$ be the
homomorphism induced by the universal property of
$\BalAlg{\Lambda}$. Then $\ker(\rho_\tau)$ is generated by the
set
\[
\bigcup_{\mu \in \Lambda}
    \Big\{ \prod_{\alpha \in E} (\omega_{\mu,\mu} - \omega_{\mu\alpha,\mu\alpha})
        : E \subset s(\mu)\Lambda\text{ is finite exhaustive and }
            \prod_{\alpha \in E} (\tau_{\mu,\mu} - \tau_{\mu\alpha,\mu\alpha}) = 0 \Big\}.
\]
\end{theorem}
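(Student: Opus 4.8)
The plan is to prove the two inclusions separately, and for both directions the technical engine is Theorem~\ref{thm:Bb uniqueness} together with the direct-limit structure from Lemma~\ref{lem:Bb structure}. Write $J$ for the ideal of $\BalAlg{\Lambda}$ generated by the displayed set of products $\prod_{\alpha \in E}(\omega_{\mu,\mu} - \omega_{\mu\alpha,\mu\alpha})$. The inclusion $J \subseteq \ker(\rho_\tau)$ is immediate: each generator of $J$ is indexed by a pair $(\mu, E)$ with $\prod_{\alpha\in E}(\tau_{\mu,\mu} - \tau_{\mu\alpha,\mu\alpha}) = 0$, and since $\rho_\tau$ is a homomorphism with $\rho_\tau \circ \omega = \tau$, it sends each such generator to $0$; as $\ker(\rho_\tau)$ is an ideal, it contains the ideal they generate. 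The real content is the reverse inclusion $\ker(\rho_\tau) \subseteq J$.

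For the reverse inclusion, I would pass to the quotient $q \colon \BalAlg{\Lambda} \to \BalAlg{\Lambda}/J$. Because $\BalAlg{\Lambda}$ is universal for representations of $\bal{\Lambda}$, the composition $q \circ \omega$ is a representation $\bar\tau$ of $\bal{\Lambda}$, and $\BalAlg{\Lambda}/J = C^*(\bar\tau)$ with $q = \rho_{\bar\tau}$. The goal is to show $\rho_{\bar\tau}$ is injective, i.e.\ that $\ker(\rho_\tau) \subseteq J = \ker(q)$, equivalently that $\bar\tau$ satisfies hypotheses \eqref{it:t lambda nonzero} and \eqref{it:gaps nonzero} of Theorem~\ref{thm:Bb uniqueness}. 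Hypothesis \eqref{it:t lambda nonzero}, that $\bar\tau_{\mu,\mu} = q(\omega_{\mu,\mu}) \ne 0$ for all $\mu$, holds because $J \cap \{\omega_{\mu,\nu}\} = \emptyset$ — here one must check that $J$ contains none of the generators $\omega_{\mu,\nu}$, which should follow from the fact that $\tau_{\mu,\mu} \ne 0$ forces the relevant products to be non-trivial elements of a proper ideal; more carefully, one applies $\rho_\tau$ itself: since $J \subseteq \ker(\rho_\tau)$ and $\rho_\tau(\omega_{\mu,\mu}) = \tau_{\mu,\mu} \ne 0$, no $\omega_{\mu,\mu}$ lies in $J$, and then $\omega_{\mu,\nu}^*\omega_{\mu,\nu} = \omega_{\nu,\nu} \notin J$ gives $\omega_{\mu,\nu} \notin J$. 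Hypothesis \eqref{it:gaps nonzero}, that $\prod_{\alpha \in E}(\bar\tau_{\mu,\mu} - \bar\tau_{\mu\alpha,\mu\alpha}) \ne 0$ for every finite exhaustive $E \subset s(\mu)\Lambda \setminus \{s(\mu)\}$, is the crux: we must show that $\prod_{\alpha\in E}(\omega_{\mu,\mu} - \omega_{\mu\alpha,\mu\alpha}) \notin J$ whenever the corresponding product of the $\tau$'s is \emph{nonzero}.

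The main obstacle is precisely this last point: showing that the only elements of the form $\prod_{\alpha\in E}(\omega_{\mu,\mu} - \omega_{\mu\alpha,\mu\alpha})$ that land in $J$ are the ones explicitly built into its generating set. The natural strategy is to produce, for each "bad" pair $(\mu, E)$ with $\prod_{\alpha\in E}(\tau_{\mu,\mu} - \tau_{\mu\alpha,\mu\alpha}) \ne 0$, a concrete representation or at least a concrete ultrafilter-type functional that vanishes on all the chosen generators of $J$ but not on $\prod_{\alpha\in E}(\omega_{\mu,\mu} - \omega_{\mu\alpha,\mu\alpha})$ — this is exactly the role the filters and ultrafilters of Section~\ref{sec:filters} are designed to play (cf.\ Lemma~\ref{lem:ultrafilters-FEsets}, which says an ultrafilter through $\mu$ must pass through some $\mu\alpha$ with $\alpha$ in a finite exhaustive set). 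Concretely, I expect the argument to restrict attention to a finite $\vee$-closed $F$ containing $d(\mu)$ and $\{d(\mu\alpha) : \alpha \in E\}$, work inside the AF algebra $B_F$ (Lemma~\ref{lem:Bb structure}\eqref{it:B_F closed}), use Lemma~\ref{Lem: matrix units} and Lemma~\ref{lem:lower bound on norm} to get a lower bound $\|\rho_{\bar\tau}(b)\| \geq \|(\text{bottom-degree part of } b)\|$ exactly as in the proof of Theorem~\ref{thm:Bb uniqueness}, and thereby reduce injectivity of $\rho_{\bar\tau}$ on $B_F$ to checking that $\bar\tau$ satisfies \eqref{it:t lambda nonzero} and \eqref{it:gaps nonzero}. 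The subtlety is that verifying \eqref{it:gaps nonzero} for $\bar\tau$ cannot itself invoke Theorem~\ref{thm:Bb uniqueness} circularly; instead one argues directly that if $\prod_{\alpha\in E}(\tau_{\mu,\mu} - \tau_{\mu\alpha,\mu\alpha}) \neq 0$ then this element, being a nonzero projection in an AF algebra not killed by $\rho_\tau$, cannot lie in the ideal generated by the listed generators — which is seen by noting that all those generators map under $\rho_\tau$ to $0$ while, by a careful bookkeeping of which gap-projections dominate which (two such projections $\prod_{\alpha\in E}(\omega_{\mu,\mu}-\omega_{\mu\alpha,\mu\alpha})$ and $\prod_{\alpha\in E'}(\omega_{\nu,\nu}-\omega_{\nu\alpha,\nu\alpha})$ interact nontrivially only when $\mu,\nu$ have a common extension), the ideal $J$ is spanned in each $B_F$ by a sub-collection of matrix-unit-type elements disjoint from the support of our target projection. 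I would write this bookkeeping up using the $\Ext(U;V)$ notation and the "gap projection" calculus already deployed in Lemma~\ref{lem:lower bound on norm}, so that the present proof becomes a close variant of the proof of Theorem~\ref{thm:Bb uniqueness} with $\ker(\rho_\tau)$ replaced by $J$ throughout.
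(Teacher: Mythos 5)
Your first inclusion and your general framework (reduce to the subalgebras $B_F$ via Lemma~\ref{lem:Bb structure} and induct on $|F|$) agree with the paper, but the central reduction in your argument is logically broken. You propose to show that $\bar\tau := q\circ\omega$ satisfies hypotheses (\ref{it:t lambda nonzero})~and~(\ref{it:gaps nonzero}) of Theorem~\ref{thm:Bb uniqueness} and to conclude that $\rho_{\bar\tau}=q$ is injective. Since $\ker(q)=J$, that would prove $J=\{0\}$, which is false in general: if $\tau$ is the restriction of the representation $T$ of Definition~\ref{dfn:Sfilters} to $\ell^2(\widehat{\Lambda}_\infty)$, then Lemma~\ref{lem:ultrafilters-FEsets} shows every gap product over a finite exhaustive set vanishes, so $J$ contains all the elements $\prod_{\alpha\in E}(\omega_{\mu,\mu}-\omega_{\mu\alpha,\mu\alpha})$, which are nonzero in $\BalAlg{\Lambda}$ by Proposition~\ref{prp:Bb generators nonzero}. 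Indeed, hypothesis~(\ref{it:gaps nonzero}) \emph{fails} for $\bar\tau$ as soon as the generating set of $J$ contains a nonzero element. What you actually need is injectivity of the induced map $\tilde\rho\colon \BalAlg{\Lambda}/J\to C^*(\tau)$, and Theorem~\ref{thm:Bb uniqueness} does not apply to a map out of the quotient: its proof rests on the estimate $\|a\|\ge\|a_m\|$ of Lemma~\ref{lem:lower bound on norm}, which uses hypothesis~(\ref{it:gaps nonzero}) to make the matrix units $\theta_{\mu,\nu}$ nonzero, and which genuinely fails once gap projections are allowed to vanish (vanishing of $\prod_{\alpha\in E}(\omega_{\mu,\mu}-\omega_{\mu\alpha,\mu\alpha})$ identifies $\omega_{\mu,\mu}$ with a sum of higher-degree elements, so the minimal-degree part of an element is no longer controlled by the whole). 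Moreover, the statement you single out as the crux --- that $\prod_{\alpha\in E}(\omega_{\mu,\mu}-\omega_{\mu\alpha,\mu\alpha})\notin J$ whenever $\prod_{\alpha\in E}(\tau_{\mu,\mu}-\tau_{\mu\alpha,\mu\alpha})\ne 0$ --- is immediate by applying $\rho_\tau$ (exactly your own argument for hypothesis~(\ref{it:t lambda nonzero})) and is nowhere near sufficient: the problem is to capture \emph{arbitrary} elements of $\ker(\rho_\tau)$, not just gap projections.

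The missing content is what the paper supplies in Proposition~\ref{prop:ideals in B F}. Given $a\in\ker(\rho_\tau)\cap B_F$ whose minimal-degree part is supported on a finite $H\subset\Lambda^m$, one approximates the higher-degree summands by finite corners, extracts from those corners finite sets $E_p$ of extensions, proves by a norm estimate that each $v(\vee E_G)$ is \emph{exhaustive} (this is essential, since the generating set of $J$ only admits exhaustive $E$), and then uses the resolution of the identity \eqref{Should have been in RaS} to decompose $a=a_{\{m\}}+a_G$ with $a_G\in B_{F\setminus\{m\}}$ and $a_{\{m\}}$ a combination of terms $\prod_{\alpha\in s(\mu)(\vee E_G)}(\omega_{\mu,\mu}-\omega_{\mu\alpha,\mu\alpha})\omega_{\mu,\nu}$. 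A second norm estimate then shows that $\prod_{\alpha\in v(\vee E_G)}(\tau_{\mu,\mu}-\tau_{\mu\alpha,\mu\alpha})\tau_{\mu,\nu}=0$, so that $a_{\{m\}}$ lies in the ideal generated by the listed elements and $a_G\in\ker(\rho_\tau)\cap B_{F\setminus\{m\}}$ is handled by the inductive hypothesis. None of this decomposition, and in particular neither the exhaustiveness claim nor the vanishing of the minimal-degree gap products under $\tau$, appears in your proposal; it cannot be obtained by mechanically replacing $\ker(\rho_\tau)$ by $J$ in the proof of Theorem~\ref{thm:Bb uniqueness}.
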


By Lemma~\ref{lem:Bb structure}(\ref{it:Bb
directlim}), it suffices to show that the ideals of the $B_F$
are all generated by the appropriate elements. We require the following technical lemma.

\begin{lemma}\label{lem:B_F decomp}
Let $(G,P)$ be a quasi-lattice ordered group, and let $\Lambda$
be a finitely aligned $P$-graph. Let $F$ be a finite
$\vee$-closed subset of $P$, let $m$ be a minimal element of
$F$, and let $H$ be a finite subset of $\Lambda^m$. Then $A_H
:= \lsp\{\omega_{\mu,\nu} : (\mu,\nu) \in H *_s H\} + B_{F
\setminus \{m\}}$ is a $C^*$-subalgebra of $B_F$ and $B_F =
\varinjlim A_H$.
\end{lemma}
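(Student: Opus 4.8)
The plan is to mimic the inductive structure already used in the proof of Lemma~\ref{lem:Bb structure}(\ref{it:B_F closed}). First I would check that $A_H$ is a $*$-subalgebra: it is clearly self-adjoint by~(B1), so I need only verify closure under multiplication. Writing $D := \lsp\{\omega_{\mu,\nu} : (\mu,\nu) \in H *_s H\}$ and $G := F \setminus \{m\}$, one has $A_H = D + B_G$, and since $B_G$ is already a $C^*$-algebra by the inductive hypothesis in Lemma~\ref{lem:Bb structure}, it suffices to check that $D$ is closed under multiplication and that $D B_G, B_G D \subset D + B_G$. Closure of $D$ under multiplication is immediate from~(B2), since for $\mu,\nu,\xi,\eta \in H$ with $d(\mu) = d(\nu) = d(\xi) = d(\eta) = m$ the only minimal common extension of $\nu$ and $\xi$ with which~(B2) is concerned is $\nu$ itself when $\nu = \xi$ (there are no others of degree $m$), so $\omega_{\mu,\nu}\omega_{\xi,\eta} = \delta_{\nu,\xi}\omega_{\mu,\eta} \in D$. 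For the cross terms, I would expand $\omega_{\mu,\nu}\omega_{\xi,\eta}$ with $(\mu,\nu) \in H *_s H$ and $(\xi,\eta) \in \Lambda^p *_s \Lambda^p$ for $p \in G$ via~(B2); because $m$ is minimal in $F$ we have $m \le m \vee p$, so every term $\omega_{\mu\alpha,\eta\beta}$ arising has $d(\mu\alpha) = m \vee p \ge m$, and either $m \vee p = m$ (in which case $\alpha = s(\mu)$, and the term lies in $D$) or $m \vee p \in G$ (in which case the term lies in $B_G$). The same argument handles $B_G D$. Then \cite[Corollary~1.8.4]{Dixmier1977} — exactly as invoked in Lemma~\ref{lem:Bb structure} — gives that the norm-closure $A_H$ is a $C^*$-algebra, sitting inside $B_F$ because both $D$ and $B_G$ do.

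Next I would establish $B_F = \varinjlim A_H$, where $H$ ranges over the finite subsets of $\Lambda^m$ directed by inclusion. That $H \subset H'$ implies $A_H \subset A_{H'}$ is clear from the definitions, so the directed system makes sense, and each $A_H \subset B_F$. It remains to see that $\bigcup_H A_H$ is dense in $B_F$. A typical element of $B_F$ is $b = b_m + \sum_{p \in G} b_p$ with $b_m \in B_m$ and $b_p \in B_p$; the summand $\sum_{p \in G} b_p$ lies in $B_G \subset A_H$ for every $H$, so it is enough to approximate $b_m \in B_m$. But $B_m \cong \bigoplus_{v \in \Lambda^0} \Kk_{\Lambda^m v}$ by Lemma~\ref{lem:Bb structure}(\ref{it:B_p}), and by Notation~\ref{ntn:compacts} the net $\{Q_H : H \subset \Lambda^m \text{ finite}\}$ of projections $Q_H = \sum_{\mu \in H} \omega_{\mu,\mu}$ is an approximate identity for $B_m$, so $Q_H b_m Q_H \to b_m$ in norm; since $Q_H b_m Q_H \in \lsp\{\omega_{\mu,\nu} : (\mu,\nu) \in H *_s H\} = D \subset A_H$, this gives the required approximation. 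Hence $B_F = \overline{\bigcup_H A_H} = \varinjlim A_H$.

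The one point requiring genuine care — and the step I expect to be the main obstacle — is the verification that $D B_G \subset D + B_G$, i.e. that expanding a product $\omega_{\mu,\nu}\omega_{\xi,\eta}$ across degrees $m$ and $p$ via~(B2) never produces a term of degree outside $F$. This is exactly where minimality of $m$ in the $\vee$-closed set $F$ is used: one needs that for $p \in G$ either $m \vee p = \infty$, or $m \vee p = m$, or $m \vee p \in G$; the last holds because $F$ is $\vee$-closed so $m \vee p \in F$, and $m \vee p \ne m$ since $m$ minimal forces $p \not\le m$. I would write this out carefully on spanning elements, tracking that $d(\nu\alpha) = d(\nu) \vee d(\xi) = m \vee p$ for each term in~(B2), exactly as in the corresponding step of Lemma~\ref{lem:Bb structure}(\ref{it:B_F closed}); the rest of the argument is routine.
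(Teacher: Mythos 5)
Your argument is correct and follows essentially the same route as the paper: show that $\lsp\{\omega_{\mu,\nu} : (\mu,\nu)\in H *_s H\}$ is a finite-dimensional $C^*$-algebra of matrix units, use minimality of $m$ together with $\vee$-closedness of $F$ to see that the cross terms with $B_{F\setminus\{m\}}$ stay inside $A_H$, apply \cite[Corollary~1.8.4]{Dixmier1977}, and observe that every spanning element of $B_F$ lies in some $A_H$. (The only cosmetic difference is that minimality of $m$ in fact excludes the case $m \vee p = m$ for $p \in F\setminus\{m\}$, so the cross terms are absorbed entirely into $B_{F\setminus\{m\}}$ as in the paper's phrasing; this does not affect your conclusion.)
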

\begin{proof}
Calculations using (B1)~and~(B2) show that
$\lsp\{\omega_{\mu,\nu} : (\mu,\nu) \in H *_s H\}$ is a
finite-dimensional $C^*$-algebra. Using~(B2) and that $m$ is
minimal, one checks that $\lsp\{\omega_{\mu,\nu} : (\mu,\nu)
\in H *_s H\}$ is absorbed by $B_{F \setminus\{m\}}$ under
multiplication. The result then follows from
\cite[Corollary~1.8.4]{Dixmier1977} since each spanning element
of $B_F$ belongs to some $A_H$.
\end{proof}

\begin{prop}\label{prop:ideals in B F}
Let $(G,P)$ be a quasi-lattice ordered group, and let $\Lambda$
be a finitely aligned $P$-graph. Let $\tau$ be a representation
of $\bal{\Lambda}$ such that each $\tau_{\mu,\mu}$ is nonzero,
and let $\rho_\tau : \BalAlg{\Lambda} \to C^*(\tau)$ be the
homomorphism induced by the universal property of
$\BalAlg{\Lambda}$. Then for each finite $\vee$-closed subset
$F$ of $P$, $\ker(\rho_\tau)\cap B_F$ is generated by the set
\begin{equation}\label{eq:set gen ideal in B F}\begin{split}
\bigcup_{q \in F} \Big\{ \prod_{\alpha \in E} (\omega_{\mu,\mu} - \omega_{\mu\alpha,\mu\alpha})
        : \mu \in \Lambda^q, E \subset \bigcup_{p \in F, q \le p} s(\mu)&\Lambda^{q^{-1}p}{} \text{ is finite exhaustive},\\
            &\text{ and } \prod_{\alpha \in E} (\tau_{\mu,\mu} - \tau_{\mu\alpha,\mu\alpha}) = 0 \Big\}.
\end{split}
\end{equation}
\end{prop}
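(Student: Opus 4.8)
The plan is to prove Proposition~\ref{prop:ideals in B F} by induction on $|F|$, using the decomposition $B_F = \varinjlim A_H$ from Lemma~\ref{lem:B_F decomp} together with the structure of $B_m \cong \bigoplus_{v} \Kk_{\Lambda^m v}$ from Lemma~\ref{lem:Bb structure}. Let $J_F$ denote the ideal of $B_F$ generated by the set \eqref{eq:set gen ideal in B F}. The inclusion $J_F \subset \ker(\rho_\tau) \cap B_F$ is immediate, since each listed generator is killed by $\rho_\tau$ by construction; the work is the reverse inclusion. For the base case $|F| = 1$, say $F = \{q\}$, we have $B_F = B_q \cong \bigoplus_{v \in \Lambda^0} \Kk_{\Lambda^q v}$ by Lemma~\ref{lem:Bb structure}(\ref{it:B_p}). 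The only exhaustive subsets of $\bigcup_{p \in F,\, q \le p} s(\mu)\Lambda^{q^{-1}p} = s(\mu)\Lambda^{e} = \{s(\mu)\}$ are $\{s(\mu)\}$ itself, and $\prod_{\alpha \in \{s(\mu)\}}(\omega_{\mu,\mu} - \omega_{\mu\alpha,\mu\alpha}) = \omega_{\mu,\mu} - \omega_{\mu,\mu} = 0$; so the generating set in \eqref{eq:set gen ideal in B F} is $\{0\}$, and we must show $\ker(\rho_\tau) \cap B_q = 0$. This follows because each $\tau_{\mu,\mu}$ is nonzero: the family $\{\tau_{\mu,\nu} : (\mu,\nu) \in \Lambda^q v \times \Lambda^q v\}$ is a family of nonzero matrix units (by (B1), (B2) and hypothesis (\ref{it:t lambda nonzero})), so by Notation~\ref{ntn:compacts} the map $\omega_{\mu,\nu} \mapsto \tau_{\mu,\nu}$ is an isomorphism on $B_q(v)$, hence $\rho_\tau$ is injective on $B_q = \bigoplus_v B_q(v)$.

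For the inductive step, fix $F$ with $|F| = k+1$, fix a minimal element $m \in F$, and set $G := F \setminus \{m\}$, which is again $\vee$-closed. Since $B_F = \varinjlim_H A_H$ over finite $H \subset \Lambda^m$ (Lemma~\ref{lem:B_F decomp}), and since ideals commute with direct limits, it suffices to show that $\ker(\rho_\tau) \cap A_H \subset J_F$ for every such $H$. Now $A_H = \lsp\{\omega_{\mu,\nu} : (\mu,\nu) \in H *_s H\} + B_G$ where $D_H := \lsp\{\omega_{\mu,\nu} : (\mu,\nu) \in H *_s H\}$ is a finite-dimensional $C^*$-algebra absorbed by $B_G$ under multiplication, so $B_G$ is an ideal of $A_H$ and $A_H / B_G \cong D_H / (D_H \cap B_G)$. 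The strategy is: (i) identify $\ker(\rho_\tau) \cap B_G$ via the inductive hypothesis applied to $G$ (noting that for $q \in G$, $\bigcup_{p \in G,\, q \le p} s(\mu)\Lambda^{q^{-1}p} \subseteq \bigcup_{p \in F,\, q \le p} s(\mu)\Lambda^{q^{-1}p}$, so $J_G$-generators are among $J_F$-generators); (ii) analyse the image of $\ker(\rho_\tau) \cap A_H$ in the quotient $A_H/B_G$ and show any element of it lifts, modulo $J_F$, to something already in $B_G$. For (ii) one uses that $D_H/(D_H \cap B_G)$ is a direct sum of matrix algebras indexed by the $\preceq$-minimal classes of paths in $H$ that are not "hidden" inside $B_G$; the map $\rho_\tau$ acts on this quotient through the compressions $\omega_{\mu,\nu} \mapsto \theta_{\mu,\nu} := \tau_{\mu,\nu}\prod_{\lambda}(\tau_{\nu,\nu} - \tau_{\nu\lambda,\nu\lambda})$ of Lemma~\ref{Lem: matrix units}, with $\lambda$ ranging over an appropriate finite exhaustive-type set built from $\Ext(H; X)$ for the supports $X$ appearing in $G$. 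A matrix-unit block $\{\theta_{\mu,\nu}\}$ is either faithfully represented by $\rho_\tau$ — in which case that block contributes nothing to the kernel — or $\theta_{\mu,\mu} = 0$, which by Lemma~\ref{Lem: matrix units} forces the set $E$ defining it to be exhaustive with $\prod_{\alpha \in E}(\tau_{\mu,\mu} - \tau_{\mu\alpha,\mu\alpha}) = 0$, so that $\prod_{\alpha \in E}(\omega_{\mu,\mu} - \omega_{\mu\alpha,\mu\alpha}) \in J_F$; and the difference $\omega_{\mu,\mu} - \prod_{\alpha \in E}(\omega_{\mu,\mu} - \omega_{\mu\alpha,\mu\alpha})$ lies in $B_G$ after expansion, reducing the problem to the previous case.

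The main obstacle, and the step requiring the most care, is (ii): controlling exactly which minimal matrix-unit blocks survive in the quotient $A_H/B_G$ and verifying that the projections $\prod_{\lambda \in s(\mu)H'}(\tau_{\mu,\mu} - \tau_{\mu\lambda,\mu\lambda})$ used to detect them correspond to genuine products over \emph{finite exhaustive} subsets of the allowed index set $\bigcup_{p \in F,\, q \le p} s(\mu)\Lambda^{q^{-1}p}$ — in particular checking that $\Ext(H; X)$-type sets, suitably restricted, are exhaustive in $s(\mu)\Lambda$ when and only when the corresponding block is annihilated. One must be careful that the tails appearing have the right degrees (lying in $\Lambda^{q^{-1}p}$ for $p \ge q$ in $F$), which is where the structure of $F$ as a $\vee$-closed set and the minimality of $m$ enter: a minimal common extension of a degree-$m$ path with a degree-$p$ path, $p \in G$, has degree $m \vee p \in F$, so its tail over the degree-$m$ path has degree $m^{-1}(m \vee p)$, of the required form. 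Once this bookkeeping is in place, the algebra is routine: one repeatedly writes an element of $\ker(\rho_\tau) \cap A_H$ as (something hitting only killed blocks, hence in $J_F$) plus (something in $B_G$), and closes the induction via the inductive hypothesis and Lemma~\ref{lem:Bb structure}(\ref{it:Bb directlim}).
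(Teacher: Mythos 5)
Your skeleton is the paper's: induction on $|F|$ via a minimal element $m$, the decomposition $B_F = \varinjlim A_H$ from Lemma~\ref{lem:B_F decomp}, splitting a kernel element into a compressed $\Lambda^m$-part plus a $B_G$-part, and the inductive hypothesis applied to $G = F\setminus\{m\}$ (your observation that $J_G$-generators are among the $J_F$-generators, and your degree bookkeeping via $\vee$-closedness of $F$, are both correct). The base case is fine. But the inductive step has a genuine gap exactly at the point you label ``the main obstacle'' and then dismiss as routine bookkeeping.

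The gap is this: your dichotomy --- each compressed block is either faithfully represented by $\rho_\tau$ (so contributes nothing to the kernel) or annihilated (so lands in $J_F$) --- only closes the argument if you already know that $\rho_\tau$ applied to the \emph{compressed $D_H$-part of $a$ alone} is zero. If the $B_G$-part of $a$ were a finite linear combination of generators with supports $F_p \subset \Lambda^p$, this would be automatic: compressing by $\prod_{\lambda}(\omega_{\mu,\mu}-\omega_{\mu\lambda,\mu\lambda})$ over $\Ext(H;\bigcup_p F_p)$ kills the $B_G$-part exactly, so the compression of $a$ lies in $\ker\rho_\tau$ because $a$ does. But a general $T_p \in B_p$ is only a norm limit of finitely supported elements, so no finite compression annihilates it exactly, and a priori $\rho_\tau$ of the compressed $D_H$-part could be a nonzero element cancelled by $\rho_\tau$ of the residual $B_G$-part. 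The paper excludes this quantitatively: it sets $M := \min_{v\in s(H)}\|\sum_{\mu,\nu\in Hv}a_{\mu,\nu}\omega_{\mu,\nu}\|$ (arranging $M>0$), chooses finite $F_p$ with $\|T_p - (\sum_{\lambda\in F_p}\omega_{\lambda,\lambda})T_p\| < M/(2|G||H|^2)$, and then proves by two separate norm contradictions that (a) each set $v(\vee E_G)$ is exhaustive and (b) each compressed block $\prod_{\alpha}(\tau_{\mu,\mu}-\tau_{\mu\alpha,\mu\alpha})\tau_{\mu,\nu}$ vanishes --- the point being that a surviving block is a full family of nonzero matrix units and so forces norm $\ge M$, while the propagated approximation error bounds the norm by $M/2$. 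These estimates are the substance of the proposition, and nothing in your sketch supplies them or a substitute. A minor further point: exhaustiveness of the compressing set is not equivalent to annihilation of the block; only the implication you actually need (annihilation of $\prod_\alpha(\tau_{\mu,\mu}-\tau_{\mu\alpha,\mu\alpha})$ forces exhaustiveness, via the argument of Lemma~\ref{Lem: matrix units} using only $\tau_{\mu,\mu}\ne 0$) holds, so once vanishing is established, membership in \eqref{eq:set gen ideal in B F} does come for free.
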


We will need the following notation in the proof of the
proposition. Given a finite subset $G$ of $\Lambda$, we define
$\MCE(G) := \big\{\lambda \in \bigcap_{\mu \in G} \mu\Lambda :
d(\lambda) = \bigvee_{\mu \in G} d(\mu)\big\}$; in particular, if $\bigvee_{\mu \in G} d(\mu)=\infty$, then $\MCE(G)=\emptyset$. Given a subset
$F$ of $\Lambda$, we define
\[
\vee F := \bigcup_{G \subset F} \MCE(G).
\]
Since $\Lambda$ is finitely aligned, $\vee F$ is finite. It is
not hard to see that $\vee F$ is the smallest subset of
$\Lambda$ which contains $F$ and is closed under taking minimal
common extensions.

\begin{proof}[Proof of Proposition~\ref{prop:ideals in B F}]
For each finite $\vee$-closed subset $F$ of $P$ denote by $I^F_\tau$ the ideal of $B_F$ generated by the
set~(\ref{eq:set gen ideal in B F}). Fix a finite $\vee$-closed set $F$. Since each element of the
set~(\ref{eq:set gen ideal in B F}) belongs to $\ker(\rho_\tau)
\cap B_F$ by definition, we have $I^F_\tau \subset
\ker(\rho_\tau) \cap B_F$, so it suffices to establish the
reverse inclusion.

We proceed by induction on $|F|$. Since $\rho_\tau$ is
injective on each $B_p$, we have $\{0\}=\ker(\rho_\tau) \cap
B_F \subset I^F_\tau$ if $|F| = 1$. Suppose now that
$\ker(\rho_\tau) \cap B_F \subset I^F_\tau$ whenever $|F| < k$,
and fix $F$ with $|F|=k$. Let $m$ be a minimal element of $F$,
and define $G:=F\setminus\{m\}$. Recall from Lemma~\ref{lem:B_F
decomp} that for finite $H\subseteq\Lambda^m$, $A_H$ denotes
$\lsp\{\omega_{\mu,\nu} : (\mu,\nu) \in H *_s H\} + B_{F
\setminus\{m\}}$. By Lemma~\ref{lem:B_F decomp}, it suffices to
show that $\ker(\rho_\tau)\cap A_H \subset I^F_\tau \cap A_H$
for all $H$.

Fix scalars $\{a_{\mu,\nu} : (\mu,\nu) \in H *_s H\}$. Without
loss of generality, we may assume that for each $v\in s(H)$
there exists $\mu,\nu\in Hv$ with $a_{\mu,\nu}\not=0$. Fix
$T_p\in B_p$ for each $p\in G$. Then
\[
a:=\sum_{(\mu,\nu) \in H *_s H}a_{\mu,\nu}\omega_{\mu,\nu}+\sum_{p\in G}T_p
\]
is a typical element of $A_H$. Suppose $\rho_\tau(a)=0$. We
must show that $a \in I^F_\tau$. We proceed in three steps:

\begin{tabular}{lp{0.8\textwidth}}%
    1. & Decompose $a$ as $a = a_{\{m\}} + a_G$ where $a_G \in B_G$, and $a_{\{m\}}$ is
    a linear combination of elements of the form
    $\prod_{\alpha\in E}(\omega_{\mu,\mu}-\omega_{\mu\alpha,\mu\alpha}) \omega_{\mu,\nu}$
    where $E$ is a finite exhaustive $\vee$-closed subset of
    $\bigcup_{p \in G, m \le p} \Lambda^{m^{-1}p}$ (see~\eqref{eq:a decomp} below). \\
    2. & Show that $a_{\{m\}} \in \ker(\rho_\tau)$. \\
    3. & Deduce that $a_G \in \ker(\rho_\tau)$ and then
    apply the inductive hypothesis.
\end{tabular}

\smallskip \noindent\textsc{Step 1.} (\emph{Decompose $a$ as $a = a_{\{m\}} + a_G$}.) Let
\[
M:=\min_{v\in s(H)}\Big\|\sum_{\mu,\nu\in Hv}a_{\mu,\nu}\omega_{\mu,\nu}\Big\|.
\]
Then $M\not=0$ by our assumption on the scalars $a_{\mu,\nu}$.
For each $p\in G$ the isomorphism $B_p \cong \bigoplus_{v \in
\Lambda^0}\Kk_{\Lambda^p v}$ of Lemma~\ref{lem:Bb
structure}(\ref{it:B_p}) and the approximate identities of the
$\Kk_{\Lambda^p v}$ obtained from Notation~\ref{ntn:compacts}
imply that there is a finite subset $F_p\subseteq\Lambda^p$
such that $\|T_p-(\sum_{\lambda\in
F_p}\omega_{\lambda,\lambda})T_p\|< M/(2|G|{|H|}^2)$. Without
loss of generality, we may assume that if $(\mu,\nu) \in H *_s
H$ and $\mu\alpha\in F_p$, then $\nu\alpha\in F_p$. Let
\[
E_p:=\{\alpha \in \Lambda^{m^{-1}p} : \text{ there exists } \mu \in\Lambda^m\text{ such that }\mu\alpha\in F_p\}.
\]
Let $E_G := \bigcup_{p \in G} E_p$. For each $v \in \Lambda^0$, the set $v(\vee E_G)$ is equal to $\vee(v E_G)$ and so is closed
under minimal common extensions. Each $v(\vee E_G)$ is
finite because $\Lambda$ is finitely aligned.

For each $\mu\in H$ we have
\[
\Big\|\Big(\omega_{\mu,\mu}-\sum_{\alpha\in s(\mu)E_p}\omega_{\mu\alpha,\mu\alpha}\Big)T_p\Big\|
    =\Big\|\omega_{\mu,\mu}\Big(T_p-\Big(\sum_{\lambda\in F_p}\omega_{\lambda,\lambda}\Big)T_p\Big)\Big\|<\frac{M}{2|G|{|H|}^2}.
\]
Hence
\begin{equation}\label{eq:cutting down T p}
\Big\|\sum_{\mu\in H}\Big(\omega_{\mu,\mu}-\sum_{\alpha\in s(\mu)E_p}\omega_{\mu\alpha,\mu\alpha}\Big)T_p\Big\|
    < \sum_{\mu\in H}\frac{M}{2|G|{|H|}^2}=\frac{M}{2|G||H|}.
\end{equation}

\smallskip\noindent\textbf{Claim.} Each $v(\vee E_G)$ is
exhaustive.

To prove this claim, we suppose that $v\in s(H)$ has the
property that $v(\vee E_G)$ is not exhaustive and seek a
contradiction. Since $v(\vee E_G)$ is not exhaustive, there
exists $\lambda\in v\Lambda$ with
$\MCE(\lambda,\alpha)=\emptyset$ for all $\alpha\in v(\vee
E_G)$. We have
\[
\Big(\sum_{\mu\in Hv}\omega_{\mu\lambda,\mu\lambda}\Big)a
    =\sum_{\mu,\nu\in Hv}a_{\mu,\nu}\omega_{\mu\lambda,\nu\lambda} + \sum_{\substack{p\in G \\ \mu\in Hv}}\omega_{\mu\lambda,\mu\lambda}T_p.
\]
Since $\rho_{\tau}(a)=0$, applying $\rho_{\tau}$ to both sides
of the above equation and rearranging yields
\[
\sum_{\mu,\nu\in Hv}a_{\mu,\nu}\tau_{\mu\lambda,\nu\lambda}=-\rho_\tau\Big(\sum_{\substack{p\in G \\ \mu\in Hv}}\omega_{\mu\lambda,\mu\lambda}T_p\Big).
\]
It follows that
\begin{align}
\Big\|\sum_{\mu,\nu\in Hv}a_{\mu,\nu}\tau_{\mu\lambda,\nu\lambda}\Big\|
    &\le \sum_{p\in G}\Big\|\sum_{\mu\in Hv}\omega_{\mu\lambda,\mu\lambda}T_p\Big\| \nonumber\\
    &= \sum_{p\in G}\Big\|\sum_{\mu\in Hv}\omega_{\mu\lambda,\mu\lambda}\omega_{\mu,\mu}T_p\Big\| \nonumber\\
    &\le \sum_{p\in G}\Big(\Big\|\sum_{\mu\in Hv}\omega_{\mu\lambda,\mu\lambda}
        \Big(\sum_{\alpha\in s(\mu)E_p}\omega_{\mu\alpha,\mu\alpha}\Big)T_p\Big\|\Big)\nonumber\\
    &\qquad+\sum_{p\in G}\Big(\Big\|\sum_{\mu\in Hv}\omega_{\mu\lambda,\mu\lambda}
        \Big(\omega_{\mu,\mu}-\sum_{\alpha\in s(\mu)E_p}\omega_{\mu\alpha,\mu\alpha}\Big)T_p\Big\|\Big),\label{eq:stopping point for big calc}
\end{align}
where the last inequality follows from the triangle inequality.
Since $\MCE(\lambda,\alpha)=\emptyset$ for all $\alpha\in v(\vee E_G)$, it follows from (B2) that
\[
\sum_{p\in G}\Big(\Big\|\sum_{\mu\in Hv}\omega_{\mu\lambda,\mu\lambda}
        \Big(\sum_{\alpha\in s(\mu)E_p}\omega_{\mu\alpha,\mu\alpha}\Big)T_p\Big\|\Big)=0.
\]
Using that $H \subset \Lambda^m$ in the second
line, we calculate
\begin{align*}
\Big\|\sum_{\mu,\nu\in Hv}a_{\mu,\nu}\tau_{\mu\lambda,\nu\lambda}\Big\|
    &\le  \sum_{p\in G}\Big(\Big\|\sum_{\mu\in Hv}\omega_{\mu\lambda,\mu\lambda}
        \Big(\omega_{\mu,\mu}-\sum_{\alpha\in s(\mu)E_p}\omega_{\mu\alpha,\mu\alpha}\Big)T_p\Big\|\Big)\nonumber\\
    &= \sum_{p\in G}\Big(\Big\|\sum_{\mu\in Hv}\omega_{\mu\lambda,\mu\lambda}
        \Big(\sum_{\mu'\in H}\Big(\omega_{\mu',\mu'}-\sum_{\alpha\in s(\mu')E_p}\omega_{\mu'\alpha,\mu'\alpha}\Big)\Big) T_p\Big\|\Big)\nonumber\\
    &\le \sum_{p\in G}\Big(\sum_{\mu\in Hv}\|\omega_{\mu\lambda,\mu\lambda}\|\,
        \Big\|\sum_{\mu'\in H}\Big(\omega_{\mu',\mu'} -\sum_{\alpha\in s(\mu')E_p}\omega_{\mu'\alpha,\mu'\alpha}\Big)T_p\Big\|\Big)\nonumber\\
    &= \sum_{\substack{p\in G \\ \mu\in Hv}}
        \Big\|\sum_{\mu'\in H}\Big(\omega_{\mu',\mu'} -\sum_{\alpha\in s(\mu')E_p} \omega_{\mu'\alpha,\mu'\alpha}\Big)T_p\Big\|.\nonumber
\end{align*}
Equation~(\ref{eq:cutting down T p}) therefore implies that
\begin{equation}\label{calc for exhaustive set}
\Big\|\sum_{\mu,\nu\in Hv}a_{\mu,\nu}\tau_{\mu\lambda,\nu\lambda}\Big\|
    <\sum_{\substack{p\in G \\ \mu\in Hv}}\frac{M}{2|G||H|}
    =\frac{|Hv|M}{2|H|}\le\frac{M}{2}.
\end{equation}
Since $\tau_{\mu,\mu}\not=0$ for all $\mu \in \Lambda$, we have
$\tau_{\mu\lambda,\nu\lambda}\not=0$ for all $\mu,\nu\in Hv$.
In particular, both $\{\omega_{\mu,\nu} : \mu,\nu \in Hv\}$ and
$\{\tau_{\mu\lambda, \nu\lambda} : \mu,\nu \in Hv\}$ are
families of nonzero matrix units. Hence the uniqueness of
$\Kk_{Hv}$ implies that $\omega_{\mu,\nu}\mapsto
\tau_{\mu\lambda,\nu\lambda}$ extends to an isomorphism
$C^*(\{\omega_{\mu,\nu} : \mu,\nu\in Hv\})\cong
C^*(\{\tau_{\mu\lambda,\nu\lambda} : \mu,\nu\in Hv\})$. Thus
\[
\Big\|\sum_{\mu,\nu\in Hv}a_{\mu,\nu}\tau_{\mu\lambda,\nu\lambda}\Big\|=\Big\|\sum_{\mu,\nu\in Hv}a_{\mu,\nu}\omega_{\mu,\nu}\Big\|\ge M.
\]
This contradicts (\ref{calc for exhaustive set}), completing
the proof of the claim.

To finish off Step~1, for each $v \in s(H)$, each $\mu \in Hv$,
and each $\alpha \in v(\vee E_G)$, let
\[
Q_{\mu\alpha}^{v(\vee E_G)} := \omega_{\mu\alpha,\mu\alpha} \prod_{\alpha\zeta \in v(\vee E_G)\setminus\{\alpha\}}
    (\omega_{\mu\alpha, \mu\alpha} - \omega_{\mu\alpha\zeta, \mu\alpha\zeta}).
\]
Each $Q^{v(\vee E_G)}_{\mu\alpha} \in B_G$ by definition of the
$v(\vee E_G)$. Let $K := \{v, \mu\} \cup \mu (\vee E_G)$. Then
$\vee K = K$. Reversing the edges in $\Lambda$ yields a
finitely aligned product system over $P$ of graphs as in
\cite[Example~3.1]{RS2005}, and then the arguments of
\cite[Proposition~8.6]{RS2005} applied to the set $K$ show that
\begin{equation}\label{Should have been in RaS}
\omega_{\mu,\mu}=\prod_{\alpha\in s(\mu) (\vee E_G)}(\omega_{\mu,\mu}-\omega_{\mu\alpha,\mu\alpha})
        +\sum_{\alpha\in s(\mu)(\vee E_G)}Q_{\mu\alpha}^{s(\mu) (\vee E_G)}
\end{equation}
(see the displayed equation immediately below equation~(8.5) on
page~421 of \cite{RS2005}).

Let
\begin{equation}\label{eq:a decomp}\begin{split}
a_{\{m\}} &:= \sum_{(\mu,\nu) \in H *_s H}a_{\mu,\nu}
    \Big(\prod_{\alpha\in s(\mu) (\vee E_G)}(\omega_{\mu,\mu}-\omega_{\mu\alpha,\mu\alpha}) \omega_{\mu,\nu}\Big)
        \quad\text{ and}\\
a_G &:= \sum_{(\mu,\nu) \in H *_s H}a_{\mu,\nu}
    \Big(\sum_{\alpha\in s(\mu) (\vee E_G)}Q_{\mu\alpha}^{s(\mu) (\vee E_G)}\omega_{\mu,\nu}\Big) + \sum_{p\in G}T_p.
\end{split}\end{equation}
It follows from \eqref{Should have been in RaS} that $a$
decomposes as $a = a_{\{m\}} + a_G$, which is the desired
decomposition of $a$. This completes Step~1.

\smallskip\noindent\textsc{Step~2.} (\emph{Show that $a_{\{m\}} \in
\ker(\rho_\tau)$}.)

To begin Step~2, fix $\mu'\in H$ and $\alpha'\in E_{s(\mu)}$.
We claim that
\begin{equation}\label{eq:orthogonal}
\Big(\sum_{\mu\in H}\prod_{\alpha\in s(\mu) (\vee E_G)}(\omega_{\mu,\mu}-\omega_{\mu\alpha,\mu\alpha})\Big)Q_{\mu'\alpha'}^{s(\mu') (\vee E_G)}=0.
\end{equation}
First suppose that $\mu \not= \mu'$. Then  $H \subset
\Lambda^m$ implies $\omega_{\mu,\mu}\omega_{\mu',\mu'}=0$,
giving~\eqref{eq:orthogonal} when $\mu \not= \mu'$. Now suppose
that $\mu = \mu'$. Then the product $\prod_{\alpha\in s(\mu)
(\vee E_G)}(\omega_{\mu,\mu}-\omega_{\mu\alpha,\mu\alpha})$
contains a factor of
$\omega_{\mu',\mu'}-\omega_{\mu'\alpha',\mu'\alpha'}$. Since
$\omega_{\mu',\mu'} \ge \omega_{\mu'\alpha',\mu'\alpha'} \ge
Q_{\mu'\alpha'}^{s(\mu) (\vee E_G)}$, we have
$(\omega_{\mu',\mu'}-\omega_{\mu'\alpha',\mu'\alpha'})
Q_{\mu'\alpha'}^{s(\mu') (\vee E_G)} = 0$, and this
establishes~\eqref{eq:orthogonal} when $\mu = \mu'$.

Using~\eqref{eq:a decomp} and then~\eqref{eq:orthogonal}, we
calculate:
\begin{align*}
\Big(\sum_{\mu\in H}\prod_{\alpha\in s(\mu) (\vee E_G)}&(\omega_{\mu,\mu}-\omega_{\mu\alpha,\mu\alpha})\Big)a \\
    &{}= \Big(\sum_{\mu\in H}\prod_{\alpha\in s(\mu) (\vee E_G)}(\omega_{\mu,\mu}-\omega_{\mu\alpha,\mu\alpha})\Big) (a_{\{m\}} + a_G) \\
    &{}= \sum_{(\mu,\nu) \in H *_s H}a_{\mu,\nu}\Big(\prod_{\alpha\in s(\mu) (\vee E_G)}(\omega_{\mu,\mu}-\omega_{\mu\alpha,\mu\alpha})\omega_{\mu,\nu}\Big) \\
        &\hskip8em{} + \sum_{\substack{p\in G \\ \mu\in H}}\Big(\prod_{\alpha\in s(\mu) (\vee E_G)}(\omega_{\mu,\mu}-\omega_{\mu\alpha,\mu\alpha})\Big)T_p.
\end{align*}
Since $\rho_{\tau}(a)=0$, applying $\rho_{\tau}$ to both sides
of the above equation and rearranging yields
\begin{equation}\label{eq:calc for ker elt}
\begin{split}
\sum_{(\mu,\nu) \in H *_s H}a_{\mu,\nu}\Big(\prod_{\alpha\in s(\mu) (\vee E_G)}{}&{}(\tau_{\mu,\mu}-\tau_{\mu\alpha,\mu\alpha})\tau_{\mu,\nu}\Big)\\
    &=-\rho_\tau\Big(\sum_{p\in G,\, \mu\in H}\Big(\prod_{\alpha\in s(\mu) (\vee E_G)}(\omega_{\mu,\mu}-\omega_{\mu\alpha,\mu\alpha})\Big)T_p\Big).
\end{split}
\end{equation}
Using (\ref{eq:calc for ker elt}), we calculate:
\begin{align*}
\Big\|\sum_{(\mu,\nu) \in H *_s H}a_{\mu,\nu}&\Big(\prod_{\alpha\in s(\mu) (\vee E_G)}(\tau_{\mu,\mu}-\tau_{\mu\alpha,\mu\alpha})\tau_{\mu,\nu}\Big)\Big\|\nonumber\\
    &\le \sum_{p\in G}\Big\|\sum_{\mu\in H}\prod_{\alpha\in s(\mu) (\vee E_G)}(\omega_{\mu,\mu}-\omega_{\mu\alpha,\mu\alpha})T_p\Big\|\nonumber\\
    &= \sum_{p\in G}\Big\|\sum_{\mu\in H}\prod_{\alpha'\in s(\mu) (\vee E_G)}(\omega_{\mu,\mu}-\omega_{\mu\alpha',\mu\alpha'})
            \prod_{\alpha\in s(\mu)E_p}(\omega_{\mu,\mu}-\omega_{\mu\alpha,\mu\alpha})T_p\Big\|
\end{align*}
since the $\omega_{\mu,\mu}-\omega_{\mu\alpha,\mu\alpha}$ are
projections and $s(\mu) E_p \subset s(\mu)(\vee E_G)$.
Since $H \subset \Lambda^m$, condition~(B2) yields
\begin{align}
\Big\|\sum_{(\mu,\nu) \in H *_s H}&a_{\mu,\nu}\Big(\prod_{\alpha\in s(\mu) (\vee E_G)}(\tau_{\mu,\mu}-\tau_{\mu\alpha,\mu\alpha})\tau_{\mu,\nu}\Big)\Big\|\nonumber\\
    &\le \sum_{p\in G}\Big\|\Big(\sum_{\mu\in H}\prod_{\alpha'\in s(\mu) (\vee E_G)}(\omega_{\mu,\mu}-\omega_{\mu\alpha',\mu\alpha'})\Big)
        \Big(\sum_{\mu\in H}\prod_{\alpha\in s(\mu)E_p}(\omega_{\mu,\mu}-\omega_{\mu\alpha,\mu\alpha})T_p\Big)\Big\| \nonumber\\
    &\le \sum_{p\in G}\Big\|\sum_{\mu\in H}\prod_{\alpha\in s(\mu)E_p}(\omega_{\mu,\mu}-\omega_{\mu\alpha,\mu\alpha})T_p\Big\|\label{eq:gap sum bound}
\end{align}
since the $\prod_{\alpha\in s(\mu)(\vee E_G)}
(\omega_{\mu,\mu}-\omega_{\mu\alpha,\mu\alpha})$ are mutually
orthogonal so that their sum is a projection. The
$\omega_{\mu\alpha,\mu\alpha}$ where $\alpha \in s(\mu)E_p$ are
mutually orthogonal, so each $\prod_{\alpha\in
s(\mu)E_p}(\omega_{\mu,\mu}-\omega_{\mu\alpha,\mu\alpha}) =
\omega_{\mu,\mu}-\sum_{\alpha\in
s(\mu)E_p}\omega_{\mu\alpha,\mu\alpha}$. Hence combining
\eqref{eq:gap sum bound}~with~\eqref{eq:cutting down T p}, we
obtain
\begin{equation}\label{calc for showing 1st term in
ker}\begin{split}
\Big\|\sum_{(\mu,\nu) \in H *_s H}a_{\mu,\nu}\Big(\prod_{\alpha\in s(\mu) (\vee E_G)}&(\tau_{\mu,\mu}-\tau_{\mu\alpha,\mu\alpha})\tau_{\mu,\nu}\Big)\Big\| \\
    &\le \sum_{p\in G}\Big\|\sum_{\mu\in H}\Big(\omega_{\mu,\mu}-\sum_{\alpha\in s(\mu)E_p}\omega_{\mu\alpha,\mu\alpha}\Big)T_p\Big\|
    < \frac{M}{2}.
\end{split}\end{equation}

Now, we claim that $\prod_{\alpha\in v(\vee E_G)}
(\tau_{\mu,\mu}-\tau_{\mu\alpha,\mu\alpha}) \tau_{\mu,\nu}=0$
for all $v\in s(H)$ and $\mu,\nu\in Hv$. Suppose for
contradiction that $v\in s(H)$ and $\mu,\nu\in Hv$ with
$\prod_{\alpha\in v(\vee E_G)}
(\tau_{\mu,\mu}-\tau_{\mu\alpha,\mu\alpha})
\tau_{\mu,\nu}\not=0$. Then Lemma~\ref{Lem: matrix units}
and~\eqref{eq: pull tau through} imply that the set
$\{\prod_{\alpha\in v(\vee E_G)}
(\tau_{\mu,\mu}-\tau_{\mu\alpha,\mu\alpha}) \tau_{\mu,\nu} :
\mu,\nu\in Hv\}$ is a family of matrix units. Since
$\prod_{\alpha\in v(\vee E_G)}
(\tau_{\mu,\mu}-\tau_{\mu\alpha,\mu\alpha})
\tau_{\mu,\nu}\not=0$, all the matrix units are nonzero. It
follows that $\omega_{\mu,\nu}\mapsto\prod_{\alpha\in v(\vee
E_G)} (\tau_{\mu,\mu}-\tau_{\mu\alpha,\mu\alpha})
\tau_{\mu,\nu}$ determines an isomorphism
$C^*(\{\omega_{\mu,\nu}:\mu,\nu\in Hv\})\cong \Kk_{Hv}$. We
then have
\begin{align*}
\Big\|\sum_{(\mu,\nu) \in H *_s H}a_{\mu,\nu}\Big(\prod_{\alpha\in s(\mu)(\vee E_G)}&(\tau_{\mu,\mu}-\tau_{\mu\alpha,\mu\alpha})\tau_{\mu,\nu}\Big)\Big\|\\
&=\max_{w\in s(H)}\Big\|\sum_{\mu,\nu\in Hw}a_{\mu,\nu}\Big(\prod_{\alpha\in w(\vee E_G)}(\tau_{\mu,\mu}-\tau_{\mu\alpha,\mu\alpha})\tau_{\mu,\nu}\Big)\Big\|\\
&\ge\Big\|\sum_{\mu,\nu\in Hv}a_{\mu,\nu}\Big(\prod_{\alpha\in v(\vee E_G)}(\tau_{\mu,\mu}-\tau_{\mu\alpha,\mu\alpha})\tau_{\mu,\nu}\Big)\Big\|\\
&=\Big\|\sum_{\mu,\nu\in Hv}a_{\mu,\nu}\omega_{\mu,\nu}\Big\|\\
&\ge M,
\end{align*}
which contradicts (\ref{calc for showing 1st term in ker}). This establishes the claim, and hence
\begin{align*}
\rho_\tau\Big(\sum_{(\mu,\nu) \in H *_s H} a_{\mu,\nu}{}&{}\Big(\prod_{\alpha\in s(\mu)(\vee E_G)}
        (\omega_{\mu,\mu}-\omega_{\mu\alpha,\mu\alpha})\omega_{\mu,\nu}\Big)\Big) \\
    &= \sum_{(\mu,\nu) \in H *_s H}a_{\mu,\nu}\Big(\prod_{\alpha\in s(\mu)(\vee E_G)}
        (\tau_{\mu,\mu}-\tau_{\mu\alpha,\mu\alpha})\tau_{\mu,\nu}\Big)=0.
\end{align*}
This completes Step~2.

\smallskip\noindent\textsc{Step~3.} (\emph{Deduce that $a_G \in
\ker(\rho_\tau)$ and then apply the inductive hypothesis}.)

To complete the proof, observe that $a \in \ker(\rho_\tau)$ by
hypothesis. Step~1 gives $a_G = a - a_{\{m\}}$, and Step~2
gives $a_{\{m\}} \in \ker(\rho_\tau)$, and it follows that $a_G
\in \ker(\rho_\tau)$ as well. The inductive hypothesis now
forces $a_G \in I^G_\tau \subset I^F_\tau$. It therefore
suffices to show that $a_{\{m\}} \in I^F_\tau$. By definition
of $a_{\{m\}}$, it suffices to show that
\begin{equation}\label{eq:gaps zero}
\prod_{\alpha \in s(\lambda) \vee E_G} \tau_{\lambda,\lambda} - \tau_{\lambda\alpha,\lambda\alpha} = 0
    \qquad\text{  for every $\lambda \in H$.}
\end{equation}
So fix $\lambda \in
H$. Recall that by choice of $H$ there exist $\mu,\nu \in
Hs(\lambda)$ such that $a_{\mu,\nu} \not= 0$. Hence,
\begin{align*}
\prod_{\alpha \in s(\lambda) \vee E_G}{}&{} \tau_{\lambda,\lambda} - \tau_{\lambda\alpha,\lambda\alpha}\\
    &= \frac{1}{a_{\mu,\nu} }
        \tau_{\lambda, \mu} \Big(\prod_{\alpha \in s(\lambda) \vee E_G} \tau_{\mu,\mu} - \tau_{\mu\alpha,\mu\alpha}\Big)
        \rho_\tau(a_{\{m\}})
        \Big(\prod_{\alpha \in s(\lambda) \vee E_G} \tau_{\nu,\nu} - \tau_{\nu\alpha,\nu\alpha}\Big) \tau_{\nu, \lambda}.
\end{align*}
Since Step~2 forces $\rho_\tau(a_{\{m\}}) = 0$, this
establishes~\eqref{eq:gaps zero} as required.
\end{proof}

\subsection{The co-universal balanced
algebra}\label{subsec:couniv bal}

In this subsection we use the analysis of
Subsection~\ref{subsec:bal ideals} to establish that there is a
representation of $\bal{\Lambda}$ by nonzero partial isometries
which is co-universal in the sense that it factors through
every other representation of $\bal{\Lambda}$ by nonzero
partial isometries.

\begin{theorem}\label{thm:Bmin couniversal}
Let $(G,P)$ be a quasi-lattice ordered group, and let
$(\Lambda,d)$ be a finitely aligned $P$-graph. There is a
$C^*$-algebra $\Bmin{\Lambda}$ generated by a representation
$\Omega$ of $\bal{\Lambda}$ such that:
\begin{enumerate}
\item\label{it:Omegas nonzero} each $\Omega_{\mu,\nu}$ is
    nonzero, and
\item\label{it:Omegas couniversal} given any other
    representation $\tau$ of $\bal{\Lambda}$ with each
    $\tau_{\mu,\nu}$ nonzero, there is a homomorphism
    $\phi_\tau : C^*(\tau) \to \Bmin{\Lambda}$ satisfying
    $\phi_\tau(\tau_{\mu,\nu}) = \Omega_{\mu,\nu}$ for each
    $(\mu,\nu) \in \bal{\Lambda}$.
\end{enumerate}
Moreover, $\Bmin{\Lambda}$ is unique up to canonical
isomorphism, and given a representation $\tau$ of
$\bal{\Lambda}$ as in~\textup{(\ref{it:Omegas couniversal})}, the
homomorphism $\phi_\tau$ is injective if and only if
\[
\prod_{\alpha \in E} (\tau_{\mu,\mu} - \tau_{\mu\alpha,\mu\alpha}) = 0
    \quad\text{ for every $\mu \in \Lambda$ and finite exhaustive subset $E \subset s(\mu)\Lambda$.}
\]
\end{theorem}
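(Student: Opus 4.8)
The plan is to realise $\Bmin{\Lambda}$ as the quotient of $\BalAlg{\Lambda}$ by the largest ideal containing none of the generators $\omega_{\mu,\nu}$, using Theorem~\ref{thm:ideals in core} both to locate this ideal and to extract the injectivity criterion.

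First I would record that, since $\omega$ generates $\BalAlg{\Lambda}$, every representation $\tau$ of $\bal{\Lambda}$ factors through $\BalAlg{\Lambda}$: with $I := \ker(\rho_\tau)$ the map $\rho_\tau$ identifies $C^*(\tau)$ with $\BalAlg{\Lambda}/I$ carrying $\tau_{\mu,\nu}$ to the image of $\omega_{\mu,\nu}$, and conversely the quotient map by any ideal $I$ is a representation $\omega^I$ of $\bal{\Lambda}$ with $\ker(\rho_{\omega^I}) = I$. Since $\tau_{\mu,\nu}^*\tau_{\mu,\nu} = \tau_{\nu,\nu}$, requiring every $\tau_{\mu,\nu}$ to be nonzero is equivalent to requiring every $\tau_{\mu,\mu}$ to be nonzero, which holds exactly when $I$ contains no $\omega_{\mu,\nu}$. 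Hence a pair $(\Bmin{\Lambda},\Omega)$ as in~(\ref{it:Omegas nonzero})~and~(\ref{it:Omegas couniversal}) exists if and only if there is a largest ideal $M\triangleleft\BalAlg{\Lambda}$ containing no $\omega_{\mu,\nu}$, in which case we take $\Bmin{\Lambda} := \BalAlg{\Lambda}/M$ with $\Omega$ the image of $\omega$; property~(\ref{it:Omegas couniversal}) then says precisely that $\ker(\rho_\tau)\subseteq M$ for every admissible $\tau$, and uniqueness up to canonical isomorphism is the usual argument of applying the co-universal property of each candidate to the other to obtain mutually inverse homomorphisms fixing generators.

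To produce $M$, for $\mu\in\Lambda$ and finite exhaustive $E\subseteq s(\mu)\Lambda$ write $P_{\mu,E} := \prod_{\alpha\in E}(\omega_{\mu,\mu}-\omega_{\mu\alpha,\mu\alpha})$, and let $M$ be the ideal generated by all such $P_{\mu,E}$. If $I$ is any ideal containing no $\omega_{\mu,\nu}$, then applying Theorem~\ref{thm:ideals in core} to $\omega^I$ (whose diagonal entries are nonzero) realises $I$ as generated by a subset of $\{P_{\mu,E}\}$, so $I\subseteq M$; thus $M$ dominates every generator-free ideal, and it remains to show that $M$ itself contains no $\omega_{\mu,\nu}$. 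This is where the ultrafilter analysis of Section~\ref{sec:filters} is essential. By Lemma~\ref{action}(\ref{it:action-preserves boundary}) the subspace $\ell^2(\widehat{\Lambda}_\infty)\subseteq\ell^2(\widehat{\Lambda})$ is invariant under every $\Sfilters_\lambda$ and $\Sfilters^*_\lambda$, so compressing the representation $(\mu,\nu)\mapsto\Sfilters_\mu\Sfilters^*_\nu$ of Lemma~\ref{lem:filter repn} to $\ell^2(\widehat{\Lambda}_\infty)$ yields a representation $\sigma$ of $\bal{\Lambda}$. Given $(\mu,\nu)\in\bal{\Lambda}$, Lemma~\ref{lem:plenty of ultrafilters} supplies an ultrafilter $U$ with $\nu\in U$; since $\nu^*\cdot U$ and $\mu\cdot(\nu^*\cdot U)$ are ultrafilters with $r(\nu^*\cdot U) = s(\nu) = s(\mu)$, we get $\sigma_{\mu,\nu}e_U = \Sfilters_\mu\Sfilters^*_\nu e_U = e_{\mu\cdot(\nu^*\cdot U)}\ne 0$, so $\sigma_{\mu,\nu}\ne 0$. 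On the other hand $\prod_{\alpha\in E}(\sigma_{\mu,\mu}-\sigma_{\mu\alpha,\mu\alpha})$ is the projection onto $\clsp\{e_U : U\in\widehat{\Lambda}_\infty,\ \mu\in U,\ \mu\alpha\notin U\text{ for all }\alpha\in E\}$, and this index set is empty when $E$ is finite exhaustive in $s(\mu)\Lambda$ by Lemma~\ref{lem:ultrafilters-FEsets}. Hence $\rho_\sigma(P_{\mu,E}) = 0$ for every admissible $\mu,E$, so $M\subseteq\ker(\rho_\sigma)$; and since $\sigma$ has nonzero diagonal entries, $\ker(\rho_\sigma)$, and therefore $M$, contains no $\omega_{\mu,\nu}$.

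Finally, $\phi_\tau\colon C^*(\tau) = \BalAlg{\Lambda}/\ker(\rho_\tau)\to\BalAlg{\Lambda}/M = \Bmin{\Lambda}$ has kernel $M/\ker(\rho_\tau)$, so it is injective exactly when $\ker(\rho_\tau) = M$. If $\prod_{\alpha\in E}(\tau_{\mu,\mu}-\tau_{\mu\alpha,\mu\alpha}) = 0$ for every $\mu$ and every finite exhaustive $E\subseteq s(\mu)\Lambda$, then the generating set supplied by Theorem~\ref{thm:ideals in core} for $\ker(\rho_\tau)$ is all of $\{P_{\mu,E}\}$, whence $\ker(\rho_\tau) = M$; conversely, if $\ker(\rho_\tau) = M$ then each $P_{\mu,E}\in M = \ker(\rho_\tau)$, so $\prod_{\alpha\in E}(\tau_{\mu,\mu}-\tau_{\mu\alpha,\mu\alpha}) = \rho_\tau(P_{\mu,E}) = 0$. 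I expect the main obstacle to be the crux step above, namely exhibiting a single representation with nonzero generators that simultaneously annihilates all the gap projections $P_{\mu,E}$, together with the structural observation that this fact, combined with the explicit generators furnished by Theorem~\ref{thm:ideals in core}, is exactly what guarantees that the supremum of the generator-free ideals is again generator-free.
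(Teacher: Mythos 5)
Your proposal is correct and follows essentially the same route as the paper: both hinge on the ultrafilter representation on $\ell^2(\widehat{\Lambda}_\infty)$ (nonzero generators via Lemma~\ref{lem:plenty of ultrafilters}, vanishing of all gap projections via Lemma~\ref{lem:ultrafilters-FEsets}) combined with Theorem~\ref{thm:ideals in core} to compare kernels. The only difference is packaging --- you present $\Bmin{\Lambda}$ abstractly as the quotient of $\BalAlg{\Lambda}$ by the ideal generated by all gap projections, while the paper takes the concrete algebra generated by the restricted representation; these coincide, and your injectivity argument matches the paper's.
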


\begin{proof}[Proof of Theorem~\ref{thm:Bmin couniversal}]
Recall from Section~\ref{sec:filters} that $\widehat{\Lambda}$
denotes the space of filters of $\Lambda$, and
$\widehat{\Lambda}_\infty$ denotes the space of ultrafilters.
Lemma~\ref{action}(\ref{it:action-preserves boundary}) implies
that the subspace $\ell^2(\widehat{\Lambda}_\infty) \subset
\ell^2(\widehat{\Lambda})$ is invariant for the partial
isometries $\{\Sfilters_\lambda : \lambda \in \Lambda\}$.
Define $\Omega : \bal{\Lambda} \to
\Bb(\ell^2(\widehat{\Lambda}_\infty))$ by $\Omega_{\mu,\nu} :=
(T_\mu T^*_\nu)|_{\ell^2(\widehat{\Lambda}_\infty)}$ for all
$(\mu,\nu) \in \bal{\Lambda}$. Since the restriction map from
$\Bb(\ell^2(\widehat{\Lambda}))$ to
$\Bb(\ell^2(\widehat{\Lambda}_\infty))$ is a homomorphism on
$C^*(\{T_{\lambda}:\lambda\in\Lambda\})$, Lemma~\ref{lem:filter
repn} implies that $\Omega$ is a representation of
$\bal{\Lambda}$. Fix $(\mu,\nu) \in \bal{\Lambda}$. By
Lemma~\ref{lem:plenty of ultrafilters}, there is an ultrafilter
$U$ of $\Lambda$ with $\nu \in U$. Hence $\Omega_{\mu,\nu} e_U
= e_{\mu\cdot(\nu^*\cdot U)} \not = 0$. Thus $\Omega$
satisfies~(\ref{it:Omegas nonzero}).

Fix a representation $\tau$ of $\bal{\Lambda}$ as
in~(\ref{it:Omegas couniversal}). The universal property of
$\BalAlg{\Lambda}$ yields homomorphisms $\rho_\tau :
\BalAlg{\Lambda} \to C^*(\tau)$ and $\rho_\Omega :
\BalAlg{\Lambda} \to \Bmin{\Lambda}$ such that
$\rho_\tau(\omega_{\mu,\nu}) = \tau_{\mu,\nu}$ and
$\rho_\Omega(\omega_{\mu,\nu}) = \Omega_{\mu,\nu}$. We will show
that $\ker(\rho_\tau) \subset \ker(\rho_\Omega)$;
condition~(\ref{it:Omegas couniversal}) will follow because
there is then a well defined homomorphism $\phi_\tau :
C^*(\tau) \to \Bmin{\Lambda}$ satisfying $\phi_\tau \circ
\rho_\tau = \phi_\Omega$.

By Theorem~\ref{thm:ideals in core}, it suffices to show that
whenever $\mu \in \Lambda$ and $E$ is a finite exhaustive
subset of $s(\mu)\Lambda$ such that $\prod_{\alpha \in E}
(\tau_{\mu,\mu} - \tau_{\mu\alpha,\mu\alpha}) = 0$, we have
$\prod_{\alpha \in E} (\Omega_{\mu,\mu} -
\Omega_{\mu\alpha,\mu\alpha}) = 0$. In particular, it is enough
to establish that
\begin{equation}\label{eq:all gaps zero}
\prod_{\alpha \in E} (\Omega_{\mu,\mu} - \Omega_{\mu\alpha,\mu\alpha}) = 0\quad\text{ for every $\mu \in \Lambda$
and finite exhaustive $E \subset s(\mu)\Lambda$};
\end{equation}
and this follows from Lemma~\ref{lem:ultrafilters-FEsets}.

We have now proved that $\Omega$ satisfies (\ref{it:Omegas
nonzero})~and~(\ref{it:Omegas couniversal}). To see that
$\Bmin{\Lambda}$ is unique up to canonical isomorphism, suppose
that $\tau$ is another representation of $\bal{\Lambda}$ such
that $C^*(\tau)$ satisfies (\ref{it:Omegas
nonzero})~and~(\ref{it:Omegas couniversal}). Then
property~(\ref{it:Omegas couniversal}) of $C^*(\tau)$ gives a
homomorphism from $\Bmin{\Lambda}$ to $C^*(\tau)$ which is an
inverse for $\phi_\tau$.

Finally, to see that $\phi_\tau$ is injective if and only if
$\prod_{\alpha \in E} (\tau_{\mu,\mu} -
\tau_{\mu\alpha,\mu\alpha}) = 0$ for every $\mu \in \Lambda$ and every
finite exhaustive subset $E \subset s(\mu)\Lambda$, observe
first that the only if implication follows from~\eqref{eq:all
gaps zero}. For the if implication, suppose that $\tau$ is a
representation of $\bal{\Lambda}$ with each $\tau_{\mu,\nu}$
nonzero, and with $\prod_{\alpha \in E} (\tau_{\mu,\mu} -
\tau_{\mu\alpha,\mu\alpha}) = 0$ for every $\mu \in \Lambda$
and finite exhaustive subset $E \subset s(\mu)\Lambda$. Then in
particular, $\ker(\rho_\tau)$ contains $\prod_{\alpha \in E}
(\omega_{\mu,\mu} - \omega_{\mu\alpha,\mu\alpha})$ for every
$\mu \in \Lambda$ and finite exhaustive subset $E \subset
s(\mu)\Lambda$, and it follows from Theorem~\ref{thm:ideals in
core} that $\ker(\rho_\Omega) \subset \ker(\rho_\tau)$. Since
$\rho_\Omega =  \phi_\tau \circ \rho_\tau$, it follows that
$\phi_\tau$ is injective.
\end{proof}

\section{The $C^*$-algebra of a $P$-graph}\label{sec:Pgraph
alg}

We are now ready to state and prove our main theorem. We define
what we mean by a representation of a $P$-graph, and we show
that the $C^*$-algebra $\Tt C^*(\Lambda)$ which is universal
for such representations admits a co-universal quotient. That
is, there is a smallest quotient of $\Tt C^*(\Lambda)$ in which
the canonical coaction of $G$ is preserved and the images of
all the generators are nonzero. The point is that the hard work
is largely already done in the results of
Section~\ref{sec:C0(G0)}: we use the uniqueness theorem for
$\BalAlg{\Lambda}$ established in Subsection~\ref{subsec:univ
bal} to identify $\BalAlg{\Lambda}$ with the fixed-point
algebra of $\Tt C^*(\Lambda)$, and we then present a fairly
generic argument, based on coaction theory, to bootstrap the
co-universal property of $\Bmin{\Lambda}$ up to the desired
co-universal quotient of $\Tt C^*(\Lambda)$.

\begin{dfn}\label{repn of Lambda}
Let $(G,P)$ be a quasi-lattice ordered group, and let $\Lambda$
be a finitely aligned $P$-graph. A \emph{representation} of
$\Lambda$ in a $C^*$-algebra $B$ is a map $t : \Lambda \to B$,
$\lambda \mapsto t_\lambda$ such that:
\begin{itemize}
\item[(T1)] $\{t_v : v \in \Lambda^0\}$ is a collection of
    mutually orthogonal projections;
\item[(T2)] $t_\mu t_\nu = t_{\mu\nu}$ whenever $s(\mu) =
    r(\nu)$;
\item[(T3)] $t^*_\mu t_\mu = t_{s(\mu)}$ for all $\mu \in
    \Lambda$; and
\item[(T4)] $t_\mu t^*_\mu t_\nu t^*_\nu = \sum_{\lambda
    \in \MCE(\mu,\nu)} t_\lambda t^*_\lambda$ for all
    $\mu,\nu \in \Lambda$.
\end{itemize}
\end{dfn}
As in \cite[Theorem~6.3]{Fowler2002}, there exists a
$C^*$-algebra $\Tt C^*(\Lambda)$ generated by a representation
$\Suniv$ of $\Lambda$ which is universal in the sense that
given any other representation $t$ of $\Lambda$, there is a
homomorphism $\pi_t : \Tt C^*(\Lambda) \to C^*(t) :=
C^*(\{t_\lambda : \lambda \in \Lambda\})$ satisfying $\pi_t
\circ \Suniv = t$.

We need to dip a little into the theory of coactions; but not
too far because the coactions we deal with are all coactions of
discrete groups. For more detail on coactions, see
\cite[Section~A.3]{EKQR2006}. The following summary is adapted
from \cite[Section~3]{pp_CLSV2009}. All tensor products of
$C^*$-algebras (here and later in the section) are minimal
tensor products.

Let $G$ be a discrete group, and let $g\mapsto U_g$ be the
universal unitary representation of $G$ in $C^*(G)$. There is a
homomorphism $\delta_G \colon C^*(G)\to C^*(G) \otimes C^*(G)$
determined by $\delta_G(U_g)=U_g\otimes U_g$. A full coaction
of $G$ on a $C^*$-algebra $A$ is an injective nondegenerate
homomorphism $\delta \colon A\to A\otimes C^*(G)$ such that
$(\delta\otimes \id_{C^*(G)})\circ \delta= (\id_A \otimes
\delta_G)\circ \delta$. The fixed-point algebra for $G$ is the
subalgebra $A_e^\delta := \{a \in A : \delta(a) = a \otimes
1_{C^*(G)}\}$. By \cite[Lemma~1.3(a)]{Quigg1996} there is a
conditional expectation $\Phi^\delta$ from $A$ to $A^\delta_e$
determined by $\Phi^\delta(a) = a$ if $a \in A^\delta_e$, and
$\Phi^\delta(a) = 0$ if $\delta(a) = a \otimes U_g$ for some
other $g \in G$. A \emph{normal} coaction is one for which
$\Phi^\delta$ is faithful on positive elements (there are a
number of equivalent characterisations of normality for
coactions, but this is the one most useful from our point of
view). Given a coaction $\delta : A \to A \otimes C^*(G)$,
there is a quotient $A^r$ of $A$, and a normal coaction
$\delta^n : A^r \to A^r \otimes C^*(G)$ such that the coaction
crossed-products $A \otimes_\delta G$ and $A^r
\otimes_{\delta^n} G$ are identical. The cosystem $(A^r,
\delta^n, G)$ is called the \emph{normalisation} of
$(A,G,\delta)$ (see \cite[Section~A.7]{EKQR2006}). We write
$q_\delta$ for the quotient map from $A$ to $A^r$. We have
$\delta^n \circ q_\delta = (q_\delta \otimes 1) \circ \delta$,
and $q_\delta$ restricts to an isomorphism $A_e^\delta\cong
(A^r)_e^{\delta^n}$ of fixed-point algebras. Indeed $q_\delta$
is isometric on $\clsp\{a \in A : \delta(a) = a \otimes U_g\}$
for each fixed $g \in G$.

A standard argument using the universal property of $\Tt
C^*(\Lambda)$ shows that there is a coaction $\delta$ of $G$ on
$\Tt C^*(\Lambda)$ which satisfies $\delta(\Suniv_\lambda) =
\Suniv_\lambda \otimes U_{d(\lambda)}$ for all $\lambda \in
\Lambda$.

\begin{rmk}\label{rmk:spanning}
Let $(G,P)$ be a quasi-lattice ordered group, let $\Lambda$ be
a finitely aligned $P$-graph, and let $t : \Lambda \to B$ be a
representation of $\Lambda$. Relation~(T4) and the
factorisation property imply that $t^*_\mu t_\nu =
\sum_{\mu\alpha = \nu\beta \in \MCE(\mu,\nu)} t_\alpha
t^*_\beta$, and it follows that $C^*(t) = \clsp\{t_\mu t^*_\nu
: \mu,\nu \in \Lambda\}$. Thus if $C^*(t)$ admits a coaction
$\alpha$ of $G$ satisfying $\alpha(t_\lambda) = t_\lambda
\otimes U_{d(\lambda)}$ for all $\lambda \in \Lambda$, then
that $C^*(t)^\alpha = \Phi^\alpha(C^*(t))$ forces
$C^*(t)^\alpha = \clsp\{t_\mu t^*_\nu : d(\mu) = d(\nu)\}$.
\end{rmk}

\begin{theorem}\label{thm:couniversal alg of Lambda}
Let $(G,P)$ be a quasi-lattice ordered group, and let $\Lambda$
be a finitely aligned $P$-graph. There exists a $C^*$-algebra
$\Csmin{\Lambda}$ generated by a representation $\Smin$ of
$\Lambda$ such that
\begin{enumerate}
\item\label{it:Smin nonzero} each $\Smin_\lambda$ is
    nonzero, and there is a coaction $\beta$ of $G$ on
    $\Csmin{\Lambda}$ satisfying $\beta(\Smin_\lambda) =
    \Smin_\lambda \otimes U_{d(\lambda)}$ for all
    $\lambda$; and
\item\label{it:Smin couniversal} given any other representation $t$ of $\Lambda$ with each $t_{\lambda}$ nonzero such that $C^*(t)$ carries a
    coaction $\alpha$ of $G$ satisfying $\alpha(t_\lambda)
    = t_\lambda \otimes U_{d(\lambda)}$ for all $\lambda$,
    there is a homomorphism $\psi_t : C^*(t) \to \Csmin{\Lambda}$
    satisfying $\psi_t(t_\lambda) = \Smin_\lambda$ for all
    $\lambda$.
\end{enumerate}
Moreover, $\Csmin{\Lambda}$ is unique up to canonical
isomorphism, and the homomorphism $\psi_t$
of~\textnormal{(\ref{it:Smin couniversal})} is injective if and
only if $\alpha$ is normal and
\[\textstyle
\prod_{\alpha \in E} (t_\mu t^*_\mu - t_{\mu\alpha} t^*_{\mu\alpha}) = 0
    \quad\text{ for every $\mu \in \Lambda$ and every finite exhaustive $E \subset s(\mu)\Lambda$}.
\]
\end{theorem}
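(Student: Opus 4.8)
The plan is to offload all the hard work onto the co-universal balanced algebra $\Bmin{\Lambda}$ of Theorem~\ref{thm:Bmin couniversal}, via an identification of $\BalAlg{\Lambda}$ with the fixed-point algebra of $\Tt C^*(\Lambda)$, and then to run a coaction-theoretic bootstrap. First I would identify $\BalAlg{\Lambda}$ with $\mathcal A := \Tt C^*(\Lambda)_e^\delta$. Relations~(T1)--(T4) together with the identity $\Suniv_\mu^*\Suniv_\nu = \sum_{\mu\alpha=\nu\beta\in\MCE(\mu,\nu)}\Suniv_\alpha\Suniv_\beta^*$ from Remark~\ref{rmk:spanning} show that $(\mu,\nu)\mapsto\Suniv_\mu\Suniv_\nu^*$ is a representation of $\bal\Lambda$ with image $\mathcal A = \clsp\{\Suniv_\mu\Suniv_\nu^* : d(\mu)=d(\nu)\}$. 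The family $\Sfilters$ of Definition~\ref{dfn:Sfilters} satisfies~(T1)--(T4) --- (T4) is exactly what is proved en route in Lemma~\ref{lem:filter repn}, and the rest is routine --- so it factors through $\Tt C^*(\Lambda)$, and the calculations with the filters $U_\mu$ in the proof of Proposition~\ref{prp:Bb generators nonzero} show that $\Suniv_\mu\Suniv_\mu^*\ne0$ and $\prod_{\alpha\in E}(\Suniv_\mu\Suniv_\mu^* - \Suniv_{\mu\alpha}\Suniv_{\mu\alpha}^*)\ne0$ for every finite exhaustive $E\subset s(\mu)\Lambda\setminus\{s(\mu)\}$. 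So Theorem~\ref{thm:Bb uniqueness} applies to $\tau_{\mu,\nu}=\Suniv_\mu\Suniv_\nu^*$ and yields an isomorphism $\BalAlg{\Lambda}\to\mathcal A$ carrying $\omega_{\mu,\nu}$ to $\Suniv_\mu\Suniv_\nu^*$; I henceforth identify these.

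Next I would construct $\Csmin{\Lambda}$. Pass to the normalisation $(\Tt C^*(\Lambda)^r,\delta^n)$; since $q_\delta$ restricts to an isomorphism on fixed-point algebras, $(\Tt C^*(\Lambda)^r)_e^{\delta^n}\cong\mathcal A$, and $\Suniv^r_\lambda:=q_\delta(\Suniv_\lambda)$ is again a representation of $\Lambda$ by nonzero partial isometries. With $\rho_\Omega:\BalAlg{\Lambda}\to\Bmin{\Lambda}$ the canonical surjection and $J:=\ker\rho_\Omega$, I would form the induced ideal $I:=\Ind J$ of $\Tt C^*(\Lambda)^r$: by the coaction theory of \cite{EKQR2006} (cf.\ \cite{pp_CLSV2009}) it is $\delta^n$-invariant, it is the largest $\delta^n$-invariant ideal with $I\cap\mathcal A\subseteq J$, and $I\cap\mathcal A=J$ because $J$ is an induced ideal. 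Setting $\Csmin{\Lambda}:=\Tt C^*(\Lambda)^r/I$ with the quotient coaction $\beta$ (normal, being a quotient of a normal coaction) and letting $\Smin_\lambda$ be the image of $\Suniv^r_\lambda$, we get $\beta(\Smin_\lambda)=\Smin_\lambda\otimes U_{d(\lambda)}$; the fixed-point algebra $\Csmin{\Lambda}_e^\beta$ equals $\mathcal A/J=\Bmin{\Lambda}$ (conditional expectations pass to quotients by invariant ideals, and $\Smin_\mu\Smin_\nu^*$ is identified with $\Omega_{\mu,\nu}$); and since $\omega_{s(\lambda),s(\lambda)}\notin J$ we get $\Smin_{s(\lambda)}\ne0$, hence $\Smin_\lambda\ne0$. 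This proves~(\ref{it:Smin nonzero}).

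For~(\ref{it:Smin couniversal}), given $t$ and $\alpha$ I would first replace $(C^*(t),\alpha)$ by its normalisation --- each $t_\lambda$ lies in $\clsp\{a:\alpha(a)=a\otimes U_{d(\lambda)}\}$, on which $q_\alpha$ is isometric, so the generators stay nonzero --- and so assume $\alpha$ normal. Then $\tau:=\big((\mu,\nu)\mapsto t_\mu t_\nu^*\big)$ is a representation of $\bal\Lambda$ with $C^*(\tau)=C^*(t)_e^\alpha$ (Remark~\ref{rmk:spanning}) and each $\tau_{\mu,\nu}$ nonzero (as $\tau_{\mu,\nu}^*\tau_{\mu,\nu}=t_\nu t_\nu^*$ has the norm of $t_{s(\nu)}\ne0$), so Theorem~\ref{thm:Bmin couniversal} gives $\phi_\tau:C^*(t)_e^\alpha\to\Bmin{\Lambda}$ with $\phi_\tau(t_\mu t_\nu^*)=\Omega_{\mu,\nu}$. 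Since $\alpha$ is normal, $\pi_t:\Tt C^*(\Lambda)\to C^*(t)$ factors through $q_\delta$ as an equivariant $\widetilde\pi_t:\Tt C^*(\Lambda)^r\to C^*(t)$ with $\delta^n$-invariant kernel; and $\phi_\tau\circ(\widetilde\pi_t|_{\mathcal A})=\rho_\Omega$ on generators, so $\ker\widetilde\pi_t\cap\mathcal A\subseteq J$. Maximality of $I$ forces $\ker\widetilde\pi_t\subseteq I$, so the quotient map $\Tt C^*(\Lambda)^r\to\Csmin{\Lambda}$ factors through $\widetilde\pi_t$, giving $\psi_t:C^*(t)\to\Csmin{\Lambda}$ with $\psi_t(t_\lambda)=\Smin_\lambda$; uniqueness of $\Csmin{\Lambda}$ is then the standard two-sided-inverse argument. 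Finally, $\psi_t$ intertwines $\alpha$ with the normal coaction $\beta$, so $\Phi^\beta\circ\psi_t=\psi_t\circ\Phi^\alpha$: if $\psi_t$ is injective this forces $\alpha$ normal (an $\alpha$-invariant positive element with $\Phi^\alpha(a)=0$ maps to a $\beta$-invariant positive element killed by $\Phi^\beta$, hence to $0$) and, applying $\psi_t$ and using~\eqref{eq:all gaps zero}, forces each $\prod_{\alpha\in E}(t_\mu t_\mu^*-t_{\mu\alpha}t_{\mu\alpha}^*)$ to vanish; conversely, if $\alpha$ is normal and these products vanish then $\psi_t|_{C^*(t)_e^\alpha}=\phi_\tau$ is injective by the last clause of Theorem~\ref{thm:Bmin couniversal}, and normality of $\alpha$ and $\beta$ with $\Phi^\beta\circ\psi_t=\psi_t\circ\Phi^\alpha$ then upgrade this to injectivity of $\psi_t$ on all of $C^*(t)$ (if $\psi_t(a^*a)=0$ then $\psi_t(\Phi^\alpha(a^*a))=0$, so $\Phi^\alpha(a^*a)=0$, so $a^*a=0$).

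I expect the main obstacle to be the construction step: verifying that $J=\ker\rho_\Omega$ is an induced ideal inside $\Tt C^*(\Lambda)^r$ (so that $I\cap\mathcal A=J$ exactly) and that $\Ind$ enjoys the stated maximality for this coaction. Here the concrete model of Theorem~\ref{thm:Bmin couniversal}, in which $\Omega$ is the restriction to $\ell^2(\widehat\Lambda_\infty)$ of a representation of $\Lambda$ --- so that $\rho_\Omega$ is a restriction of $\pi_\Sfilters$ and the ideal of $\Tt C^*(\Lambda)$ generated by $J$ lands inside $\ker\pi_\Sfilters$ --- should be exactly what is needed to certify that $J$ is induced; everything else is bookkeeping with conditional expectations and the universal properties already in hand.
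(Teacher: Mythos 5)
Your architecture agrees with the paper's in its two load-bearing components: the identification $\BalAlg{\Lambda}\cong\Tt C^*(\Lambda)^\delta_e$ via $\omega_{\mu,\nu}\mapsto\Suniv_\mu\Suniv_\nu^*$ (proved exactly as you indicate, from Theorem~\ref{thm:Bb uniqueness} and the filter representation), and the bootstrap from the co-universal property of $\Bmin{\Lambda}$ on fixed-point algebras to the co-universal property of $\Csmin{\Lambda}$ via the intertwining relations $\Phi^\alpha\circ\pi_t=\pi_t\circ\Phi^\delta$ and normality of $\beta$. Where you genuinely diverge is the construction of $\Csmin{\Lambda}$ itself. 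The paper never forms an induced ideal: it takes the restriction $\Sfilters_\lambda|_{\ell^2(\widehat{\Lambda}_\infty)}$, tensors with $U_{d(\lambda)}$ to obtain a representation $\tT$ of $\Lambda$ in $\Bb(\ell^2(\widehat{\Lambda}_\infty))\otimes C^*(G)$ carrying the obvious coaction $a\otimes U_g\mapsto(a\otimes U_g)\otimes U_g$, and defines $\Csmin{\Lambda}$ as the normalisation of $C^*(\tT)$. The co-universal property is then proved without any maximality statement about ideals: for $a\in\ker\pi_t$ one gets $\pi_t(\Phi^\delta(a^*a))=0$, transports this through the isomorphism $\Csmin{\Lambda}^\beta\cong\Bmin{\Lambda}$ and the co-universal map $C^*(t)^\alpha\to\Bmin{\Lambda}$ to conclude $\Phi^\beta(\pi_{\Smin}(a^*a))=0$, and normality of $\beta$ finishes. (Note the paper does not normalise $(C^*(t),\alpha)$ first; the expectation argument works for arbitrary $\alpha$.) Your route, quotienting $\Tt C^*(\Lambda)^r$ by $\Ind J$ with $J=\ker\rho_\Omega$, can be made to work, but it buys you nothing over the paper's argument and costs you the induced-ideal technology you flag as the main obstacle: existence and maximality of the largest invariant ideal $I$ with $I\cap\mathcal{A}\subseteq J$ requires knowing that invariant ideals are graded by spectral subspaces and that $J$ is stable under the maps $j\mapsto\Suniv_\nu\Suniv_\mu^*\,j\,\Suniv_\mu\Suniv_\nu^*$ for $d(\mu)\neq d(\nu)$ (this last point does follow from the concrete model on $\ell^2(\widehat{\Lambda}_\infty)$, as you suggest, since $J=\ker(\pi|_{\mathcal A})$ for a representation $\pi$ of all of $\Tt C^*(\Lambda)$).

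One justification in your construction step is wrong as stated: the quotient of a normal coaction by an invariant ideal is \emph{not} automatically normal, so ``normal, being a quotient of a normal coaction'' is not a proof. Normality of $\beta$ is essential later (it is the only thing that lets you pass from $\Phi^\beta(\psi_t(a^*a))=0$ to $\psi_t(a)=0$), so this needs a genuine argument. For your specific ideal it is true: if $I=\{a:\Phi^{\delta^n}(a^*a)\in J\}$, then a positive $a$ with $\Phi^{\delta^n}(a)\in I\cap\mathcal{A}=J$ satisfies $a^{1/2}\in I$, hence $a\in I$, which is exactly faithfulness of the quotient expectation on positive elements. The paper sidesteps the issue entirely because its $\Csmin{\Lambda}$ is \emph{defined} as a normalisation, so $\beta=\beta_0^n$ is normal by construction. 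Everything else in your proposal --- the nonvanishing of the $\Smin_\lambda$, the identification $\Csmin{\Lambda}^\beta_e\cong\Bmin{\Lambda}$, and both directions of the injectivity criterion --- matches the paper's argument.
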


To prove the theorem we require a preliminary result

\begin{lemma}
Let $(G,P)$ be a quasi-lattice ordered group, and let $\Lambda$
be a finitely aligned $P$-graph. Then there is an isomorphism
$\BalAlg{\Lambda} \cong \Tt C^*(\Lambda)^\delta$ which takes
$\omega_{\mu,\nu}$ to $\Suniv_\mu \Suniv^*_\nu$ for all
$(\mu,\nu) \in \bal{\Lambda}$.
\end{lemma}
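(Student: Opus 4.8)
The plan is to realise the isomorphism as the homomorphism induced by the universal property of $\BalAlg{\Lambda}$ applied to the representation $\tau : \bal{\Lambda} \to \Tt C^*(\Lambda)$, $\tau_{\mu,\nu} := \Suniv_\mu \Suniv^*_\nu$, and then to identify its range with $\Tt C^*(\Lambda)^\delta$ and establish injectivity. First I would verify that $(\mu,\nu) \mapsto \Suniv_\mu \Suniv^*_\nu$ is a representation of $\bal{\Lambda}$: condition (B1) is immediate, and (B2) follows exactly as in Lemma~\ref{lem:filter repn}, using the formula $\Suniv^*_\nu \Suniv_\xi = \sum_{\nu\alpha = \xi\beta \in \MCE(\nu,\xi)} \Suniv_\alpha \Suniv^*_\beta$ from Remark~\ref{rmk:spanning} together with (T2) to get $\Suniv_\mu \Suniv^*_\nu \Suniv_\xi \Suniv^*_\eta = \sum_{\nu\alpha = \xi\beta \in \MCE(\nu,\xi)} \Suniv_{\mu\alpha} \Suniv^*_{\eta\beta}$, plus a short bookkeeping check with $d$ and $s$ confirming that each $(\mu\alpha,\eta\beta)$ again lies in $\bal{\Lambda}$. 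The universal property of $\BalAlg{\Lambda}$ then supplies a homomorphism $\rho_\tau : \BalAlg{\Lambda} \to \Tt C^*(\Lambda)$ with $\rho_\tau(\omega_{\mu,\nu}) = \Suniv_\mu \Suniv^*_\nu$.

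For the range: since $\delta(\Suniv_\mu \Suniv^*_\nu) = \Suniv_\mu \Suniv^*_\nu \otimes U_{d(\mu)d(\nu)^{-1}} = \Suniv_\mu \Suniv^*_\nu \otimes 1$ whenever $d(\mu) = d(\nu)$, the image of $\rho_\tau$ lies in $\Tt C^*(\Lambda)^\delta$; being the range of a $C^*$-homomorphism it is closed, hence equals $\clsp\{\Suniv_\mu \Suniv^*_\nu : (\mu,\nu) \in \bal{\Lambda}\}$. Relations (T1)--(T3) give $\Suniv_\mu \Suniv^*_\nu = \Suniv_\mu \Suniv_{s(\mu)} \Suniv_{s(\nu)} \Suniv^*_\nu = 0$ when $s(\mu) \ne s(\nu)$, so this closed span coincides with $\clsp\{\Suniv_\mu \Suniv^*_\nu : d(\mu) = d(\nu)\}$, which is exactly $\Tt C^*(\Lambda)^\delta$ by Remark~\ref{rmk:spanning}. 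Thus $\rho_\tau$ maps $\BalAlg{\Lambda}$ onto $\Tt C^*(\Lambda)^\delta$.

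It remains to show $\rho_\tau$ is injective, and by Theorem~\ref{thm:Bb uniqueness} this reduces to checking that $\Suniv_\mu \Suniv^*_\mu \ne 0$ for every $\mu \in \Lambda$ and that $\prod_{\alpha \in E}(\Suniv_\mu \Suniv^*_\mu - \Suniv_{\mu\alpha} \Suniv^*_{\mu\alpha}) \ne 0$ for every $\mu$ and every finite exhaustive $E \subset s(\mu)\Lambda \setminus \{s(\mu)\}$. To see this I would first observe that $\Sfilters : \Lambda \to \Bb(\ell^2(\widehat{\Lambda}))$ of Definition~\ref{dfn:Sfilters} is a representation of $\Lambda$ in the sense of Definition~\ref{repn of Lambda}: (T1)--(T3) are routine from the definition and Lemma~\ref{action}, while (T4) is verified just as the corresponding identity in the proof of Lemma~\ref{lem:filter repn}, using that distinct minimal common extensions of $\mu$ and $\nu$ cannot lie in a common filter. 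Hence the universal property of $\Tt C^*(\Lambda)$ yields a homomorphism $\pi_\Sfilters$ with $\pi_\Sfilters(\Suniv_\lambda) = \Sfilters_\lambda$, which carries $\Suniv_\mu \Suniv^*_\mu$ and the gap products to $\Sfilters_\mu \Sfilters^*_\mu$ and $\prod_{\alpha \in E}(\Sfilters_\mu \Sfilters^*_\mu - \Sfilters_{\mu\alpha} \Sfilters^*_{\mu\alpha})$; these are nonzero by the proof of Proposition~\ref{prp:Bb generators nonzero} (witnessed by the filter $U_\mu = \{\lambda \in \Lambda : \mu \in \lambda\Lambda\}$), so the originals in $\Tt C^*(\Lambda)$ are nonzero as well. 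Theorem~\ref{thm:Bb uniqueness} then gives injectivity, and combined with the surjectivity above, $\rho_\tau$ is the desired isomorphism taking $\omega_{\mu,\nu}$ to $\Suniv_\mu \Suniv^*_\nu$. The only genuinely computational ingredient is the (T1)--(T4) verification for $\Sfilters$, and even that is routine; everything else is bookkeeping assembled from Remark~\ref{rmk:spanning}, Proposition~\ref{prp:Bb generators nonzero}, and Theorem~\ref{thm:Bb uniqueness}.
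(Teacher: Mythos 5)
Your proposal is correct and follows essentially the same route as the paper: induce $\rho_\tau$ from the universal property of $\BalAlg{\Lambda}$, identify the range with $\Tt C^*(\Lambda)^\delta$ via Remark~\ref{rmk:spanning} and the vanishing of $\Suniv_\mu\Suniv^*_\nu$ when $s(\mu)\ne s(\nu)$, and obtain injectivity from Theorem~\ref{thm:Bb uniqueness} by composing with the homomorphism $\pi_{\Sfilters}$ onto the filter representation and invoking \eqref{eq:balalg gens nonzero} and \eqref{eq:balalg gaps nonzero}. The only difference is that you spell out a few steps the paper labels ``routine''.
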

\begin{proof}
It is routine that $\tau_{\mu,\nu} := \Suniv_\mu \Suniv^*_\nu$
determines a representation $\tau$ of $\bal{\Lambda}$. So there
is a homomorphism $\rho_\tau$ from $\BalAlg{\Lambda}$ to $\Tt
C^*(\Lambda)$ which takes each $\omega_{\mu,\nu}$ to
$\Suniv_\mu \Suniv^*_\nu$. The partial isometries
$\Sfilters_\lambda$ of Definition~\ref{dfn:Sfilters} clearly
satisfy $\Sfilters_{\mu}\Sfilters_{\nu} = \Sfilters_{\mu\nu}$
when $s(\mu) = r(\nu)$, and combined with Lemma~\ref{lem:filter
repn}, this shows that $\Sfilters$ is a representation of
$\Lambda$. Hence there is a homomorphism $\pi_{\Sfilters} : \Tt
C^*(\Lambda) \to C^*(\Sfilters)$ which takes each $\Suniv_\mu$
to $\Sfilters_\mu$. Thus \eqref{eq:balalg gens
nonzero}~and~\eqref{eq:balalg gaps nonzero} imply that each
$\Suniv_\mu \Suniv^*_\nu \not= 0$ and that for every $\mu \in
\Lambda$ and finite exhaustive set $E \subset s(\mu)\Lambda$,
\[\textstyle
\prod_{\alpha\in E}(\Suniv_\mu \Suniv^*_\mu - \Suniv_{\mu\alpha}\Suniv^*_{\mu\alpha}) \not= 0.
\]
Hence Theorem~\ref{thm:Bb uniqueness} implies that $\rho_\tau$
is injective.

It remains to show that the range of $\rho_\tau$ is $\Tt
C^*(\Lambda)^\delta$. We have $\Suniv_\mu \Suniv^*_\nu = 0$
unless $s(\mu) = s(\nu)$ by~(T3). Hence
\[
\range(\rho_\tau) = \clsp\{\Suniv_\mu \Suniv^*_\nu : d(\mu) = d(\nu)\},
\]
and this is equal to $\Tt C^*(\Lambda)^\delta$ by
Remark~\ref{rmk:spanning}.
\end{proof}

\begin{proof}[Proof of Theorem~\ref{thm:couniversal alg of
Lambda}] Lemma~\ref{action}(\ref{it:action-preserves boundary})
implies that $\ell^2(\widehat{\Lambda}_\infty) \subset
\ell^2(\widehat{\Lambda})$ is invariant for the partial
isometries $\Sfilters_\lambda$ of
Definition~\ref{dfn:Sfilters}. Hence the partial isometries
$\Sfilters_\lambda|_{\ell^2(\widehat{\Lambda}_\infty)}$ form a
representation of $\Lambda$ on
$\Bb(\ell^2(\widehat{\Lambda}_\infty))$.

Define a map $\tT : \Lambda \to
\Bb(\ell^2(\widehat{\Lambda}_\infty)) \otimes C^*(G)$ by
\[
\tT_\lambda := \Sfilters_\lambda|_{\ell^2(\widehat{\Lambda}_\infty)} \otimes U_{d(\lambda)}.
\]
It is straightforward to see that $\tT$ is a representation of
$\Lambda$.

Let $\beta_0$ be the canonical coaction on
$\Bb(\ell^2(\widehat{E}_\infty)) \otimes C^*(G)$ given by
$\beta_0(a \otimes U_g) := (a \otimes U_g) \otimes U_g$. Since
sums of the form $\sum_{v \in F} \tT_v$ where $F$ increases
over finite subsets of $\Lambda^0$ form an approximate identity
for $\Bb(\ell^2(\widehat{E}_\infty))$, and since $G$ is
discrete, $\beta_0$ restricts to a coaction, also denoted
$\beta_0$, on $C^*(\tT) = \clsp\{\tT_\mu \tT^*_\nu : \mu,\nu
\in \Lambda\}$ (see \cite[Remark~A.22(3)]{EKQR2006}). Let
$(\Csmin{\Lambda}, \beta)$ be the normalisation of the cosystem
$(C^*(\tT), \beta_0)$ as in \cite[Definition~A.56]{EKQR2006}:
so $\Csmin{\Lambda} = C^*(\tT)^r$ and $\beta = \beta_0^n$.
Recall that $q_{\beta_0}$ denotes the canonical quotient map
from $C^*(\tT)$ to $\Csmin{\Lambda}$. For each $\lambda \in
\Lambda$, let $\Smin_\lambda:=q_{\beta_0}(\tT_\lambda)$, so
$\Smin$ is a representation of $\Lambda$. Since $q_{\beta_0}$
is isometric on $\clsp\{a \in C^*(\tT) : \beta_0(a) = a \otimes
U_{d(\lambda)}\}$, it follows from Lemma~\ref{lem:plenty of
ultrafilters} that each $S_\lambda$ is nonzero. Hence the
triple $(\Csmin{\Lambda}, \Smin, \beta)$
satisfies~(\ref{it:Smin nonzero}).

To prove~(\ref{it:Smin couniversal}), fix a representation $t$ of $\Lambda$ and a coaction $\alpha$ of $G$ on
$C^*(t)$ as in~(\ref{it:Smin couniversal}). Let $\pi_t : \Tt
C^*(\Lambda) \to C^*(t)$ be the homomorphism obtained from the
universal property of $\Tt C^*(\Lambda)$. It suffices to show
that $\ker(\pi_t) \subset \ker(\pi_{\Smin})$ for then
$\pi_{\Smin}$ descends to the desired homomorphism from
$C^*(t)$ to $C^*_{\min}(\Lambda)$.

Let
\[
\Phi^\delta : \Tt C^*(\Lambda) \to \Tt C^*(\Lambda)^\delta,
\quad
\Phi^\alpha : C^*(t) \to C^*(t)^\alpha
\quad\text{ and } \quad
\Phi^\beta : \Csmin{\Lambda} \to \Csmin{\Lambda}^\beta
\]
be the conditional expectations obtained from the coactions
$\delta, \alpha, \beta$.

Fix $a \in \Tt C^*(\Lambda)$ with $\pi_t(a) = 0$. Then
$\Phi^\alpha(\pi_t(a^*a)) = 0$. Since $\pi_t$ intertwines
$\delta$ and $\alpha$ on spanning elements, we have
$\Phi^\alpha \circ \pi_t = \pi_t \circ \Phi^{\delta}$, and
hence
\begin{equation}\label{eq:composite image zero}
\pi_t(\Phi^{\delta}(a^*a)) = 0.
\end{equation}

\textbf{Claim.} There is an isomorphism $\Csmin{\Lambda}^\beta
\cong \Bmin{\Lambda}$ satisfying $\Smin_\mu \Smin^*_\nu \mapsto
\Omega_{\mu,\nu}$ for all $(\mu,\nu) \in \bal{\Lambda}$.

To prove the claim, first note that Remark~\ref{rmk:spanning}
implies that $\Csmin{\Lambda}^\beta$ is generated by the
representation of $\bal{\Lambda}$ defined by $(\mu,\nu) \mapsto
\Smin_\mu \Smin^*_\nu$. Since $q_{\beta_0}$ is injective on
$C^*(\tT)^{\beta_0}$, we have $\Smin_\mu \Smin^*_\nu \not= 0$
for all $(\mu,\nu) \in \bal{\Lambda}$. The co-universal
property of $\Bmin{\Lambda}$ therefore induces a homomorphism
$\phi_{\Smin} : \Csmin{\Lambda}^\beta \to \Bmin{\Lambda}$
satisfying $\phi_{\Smin}(\Smin_\mu \Smin^*_\nu) =
\Omega_{\mu,\nu}$. Lemma~\ref{lem:ultrafilters-FEsets} implies
that for each $\mu \in \Lambda$ and each finite exhaustive
subset $E$ of $s(\mu)\Lambda$,
\[\textstyle
\prod_{\alpha \in E} (\Smin_\mu \Smin^*_\mu - \Smin_{\mu\alpha}\Smin^*_{\mu\alpha}) = 0.
\]
Hence the final assertion of Theorem~\ref{thm:Bmin couniversal}
implies that $\phi_S$ is an isomorphism. This proves the claim.

The claim combined with the co-universal property of
$\Bmin{\Lambda}$ implies that there is a homomorphism $\phi_t :
C^*(t)^\alpha \to \Csmin{\Lambda}^\beta$ which takes $t_\mu
t^*_\nu$ to $\Smin_\mu \Smin^*_\nu$ for all $(\mu,\nu) \in
\bal{\Lambda}$. In particular, $\phi_t \circ \pi_t|_{\Tt
C^*(\Lambda)^\delta} = \pi_{\Smin}|_{\Tt C^*(\Lambda)^\delta}$.
Hence~\eqref{eq:composite image zero} implies that
$\pi_\Smin(\Phi^{\delta}(a^*a)) = 0$.

Since $\pi_{\Smin}$ intertwines $\delta$ and $\beta$, it
follows that $\Phi^\beta(\pi_{\Smin}(a^*a)) = 0$. As $\beta$ is
a normal coaction, we deduce that $\pi_{\Smin}(a^*a) = 0$, and
hence that $\pi_{\Smin}(a) = 0$ as required.

The uniqueness assertion follows from an argument identical to
the one establishing uniqueness of $\Bmin{\Lambda}$. It remains
to show that $\psi_t$  is injective if and only if $\alpha$ is
normal and
\[\textstyle
\prod_{\alpha \in E} (t_\mu t^*_\mu - t_{\mu\alpha} t^*_{\mu\alpha}) = 0
    \quad\text{ for every $\mu \in \Lambda$ and every finite exhaustive $E \subset s(\mu)\Lambda$}.
\]
The ``only if'' direction is clear because $\beta$ is normal
and each $\prod_{\alpha \in E} (\Smin_\mu\Smin^*_\mu -
\Smin_{\mu\alpha} \Smin^*_{\mu\alpha}) = 0$. For the ``if''
direction, suppose that the above two conditions are satisfied.
Then the final assertion of Theorem~\ref{thm:Bmin couniversal}
and the claim above imply that $\psi_t$ restricts to an
isomorphism of $C^*(t)^\alpha$. That $\alpha$ is normal implies
that $\Phi^\alpha : C^*(t) \to C^*(t)^\alpha$ is faithful on
positive elements; hence
\[
\psi_t(a) = 0
    \implies \Phi^\beta(\psi_t(a^*a)) = 0
    \implies \psi_t(\Phi^\alpha(a^*a)) = 0
    \implies \Phi^\alpha(a^*a) = 0
    \implies a = 0.
\]
Thus $\psi_t$ is injective.
\end{proof}

The following corollary shows that if $P = \NN^k$, then our
$P$-graph $C^*$-algebra coincides with the $k$-graph
$C^*$-algebra of \cite{RSY2004}.

\begin{cor}\label{lem:k-graphs}
Let $\Lambda$ be a finitely aligned $k$-graph. Then the
co-universal $C^*$-algebra $\Csmin{\Lambda}$ obtained from
Theorem~\textup{\ref{thm:couniversal alg of Lambda}} by regarding
$\Lambda$ as an $\NN^k$-graph is canonically isomorphic to the
$k$-graph $C^*$-algebra of \cite{RSY2004}.
\end{cor}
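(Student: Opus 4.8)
The plan is to recognise the generating Cuntz--Krieger $\Lambda$-family of the $k$-graph algebra $C^*(\Lambda)$ of \cite{RSY2004} as a representation of $\Lambda$ in the sense of Definition~\ref{repn of Lambda} that satisfies all the hypotheses of the injectivity statement in Theorem~\ref{thm:couniversal alg of Lambda}, and then to read off the isomorphism directly from that theorem. Throughout we take $(G,P) = (\ZZ^k, \NN^k)$, so that a $P$-graph is a $k$-graph and a representation of $\Lambda$ in the sense of Definition~\ref{repn of Lambda} is exactly a Toeplitz--Cuntz--Krieger $\Lambda$-family; the algebra $C^*(\Lambda)$ of \cite{RSY2004} is then the universal $C^*$-algebra for those Toeplitz--Cuntz--Krieger $\Lambda$-families $r$ that additionally satisfy $\prod_{\lambda \in E}(r_v - r_\lambda r^*_\lambda) = 0$ for every $v \in \Lambda^0$ and every finite exhaustive $E \subset v\Lambda$.

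First I would fix the generating such family $r$ in $C^*(\Lambda) = C^*(r)$; by \cite{RSY2004} each $r_\lambda$ is nonzero. I would then produce the required coaction: the family $\lambda \mapsto r_\lambda \otimes U_{d(\lambda)}$ in $C^*(\Lambda) \otimes C^*(\ZZ^k)$ is again a Cuntz--Krieger $\Lambda$-family (this is routine from the fact that $d$ is a functor and $g \mapsto U_g$ a unitary representation), so the universal property of $C^*(\Lambda)$ yields a homomorphism $\alpha : C^*(\Lambda) \to C^*(\Lambda) \otimes C^*(\ZZ^k)$ with $\alpha(r_\lambda) = r_\lambda \otimes U_{d(\lambda)}$; applying the counit of $C^*(\ZZ^k)$ to the second leg recovers the identity, so $\alpha$ is injective, and the coaction identity holds because it holds on the generators. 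Thus $\alpha$ is a coaction of $\ZZ^k$ on $C^*(\Lambda)$, and it is normal because $\ZZ^k$ is amenable.

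Next I would verify that $r$ satisfies the exhaustive-product condition of Theorem~\ref{thm:couniversal alg of Lambda}, namely $\prod_{\alpha \in E}(r_\mu r^*_\mu - r_{\mu\alpha} r^*_{\mu\alpha}) = 0$ for each $\mu \in \Lambda$ and each finite exhaustive $E \subset s(\mu)\Lambda$. For $\alpha \in E$ we have $r(\alpha) = s(\mu)$, so relations~(T1)--(T3) give that $r_\alpha r^*_\alpha$ is a projection dominated by $r_{s(\mu)}$; moreover $x \mapsto r_\mu x r^*_\mu$ restricts to a $*$-homomorphism on the corner $r_{s(\mu)} C^*(r) r_{s(\mu)}$ (using $r^*_\mu r_\mu = r_{s(\mu)}$), and it sends each $r_{s(\mu)} - r_\alpha r^*_\alpha$ to $r_\mu r^*_\mu - r_{\mu\alpha} r^*_{\mu\alpha}$ by~(T2). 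Conjugating the Cuntz--Krieger relation $\prod_{\alpha \in E}(r_{s(\mu)} - r_\alpha r^*_\alpha) = 0$ by $r_\mu$ therefore gives exactly the identity we want; conversely these Cuntz--Krieger relations are the special case $\mu = v \in \Lambda^0$.

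Finally, Theorem~\ref{thm:couniversal alg of Lambda} applied to the representation $t = r$ of $\Lambda$ --- which has each $t_\lambda$ nonzero and on whose generated algebra the coaction $\alpha$ acts as required --- furnishes a homomorphism $\psi_r : C^*(\Lambda) \to \Csmin{\Lambda}$ with $\psi_r(r_\lambda) = \Smin_\lambda$ for all $\lambda$. This map is surjective because the $\Smin_\lambda$ generate $\Csmin{\Lambda}$, and it is injective by the final assertion of the theorem, since $\alpha$ is normal and the products displayed above vanish; hence $\psi_r$ is the asserted canonical isomorphism $C^*(\Lambda) \cong \Csmin{\Lambda}$. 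I do not expect any serious obstacle here: the only genuine content beyond bookkeeping is the translation between the two forms of the Cuntz--Krieger relation carried out in the third paragraph, together with the appeal to amenability of $\ZZ^k$ to guarantee that $\alpha$ is normal, and both are routine.
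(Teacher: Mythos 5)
Your proposal is correct and follows essentially the same route as the paper: both verify the hypotheses of Theorem~\ref{thm:couniversal alg of Lambda} for the generating Cuntz--Krieger family of $C^*(\Lambda)$ and read off the isomorphism from the injectivity criterion there, the only cosmetic difference being that the paper obtains the coaction from the gauge action of $\TT^k$ via action--coaction duality while you build it directly from the universal property. Your third paragraph, promoting the vertex Cuntz--Krieger relations to the condition $\prod_{\alpha\in E}(t_\mu t_\mu^* - t_{\mu\alpha}t_{\mu\alpha}^*)=0$ for general $\mu$ by conjugating the corner $t_{s(\mu)}C^*(t)t_{s(\mu)}$ by $t_\mu$, is a step the paper leaves implicit, and you supply it correctly.
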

\begin{proof}
By definition, $C^*(\Lambda)$ is generated by a representation
$t$ of $\Lambda$. Corollary~4.3 of \cite{RSY2004} shows that
$t_v \not= 0$ for all $v \in \Lambda^0$, and that there is an
action $\gamma$ of $\TT^k$ on $C^*(\Lambda)$ satisfying
$\gamma_z(t_\lambda) = z^{d(\lambda)}t_\lambda$ for all
$\lambda$. Under action-coaction duality, $\gamma$ determines a
coaction $\alpha$ of $\ZZ^k$ satisfying $\alpha(t_\lambda) =
t_\lambda \otimes U_{d(\lambda)}$ for all $\lambda \in
\Lambda$. The coaction $\alpha$ is normal because $\ZZ^k$ is
abelian and hence amenable. Moreover, the $t_\lambda$ satisfy
\[
\prod_{\lambda \in E} (t_v - t_\lambda t^*_\lambda) = 0\quad\text{ for all $v \in \Lambda^0$ and finite exhaustive $E \subset v\Lambda$}
\]
by definition of $C^*(\Lambda)$ (see
\cite[Definition~2.5]{RSY2004}). The result therefore follows
from Theorem~\ref{thm:couniversal alg of Lambda}.
\end{proof}

The following corollary shows that our construction is
compatible with inclusions of quasi-lattice ordered groups. We use it in Example~\ref{eg:CLSV} below.

\begin{cor}\label{cor:subsemigroup}
Let $(G,P)$ be a quasi-lattice ordered group, and let $(H,Q)$
be a subgroup; that is, $H \le G$, $Q \le P$, the order on $H$
agrees with that on $G$, and $Q$ is closed under taking least
upper bounds in $P$. Suppose that $Q$ is hereditary in the sense that if $p,q \in P$ with $pq \in
Q$, then $p,q \in Q$. Let $\iota : H \to G$ be the inclusion
map. Let $\Lambda$ be a finitely aligned $Q$-graph. Define
$\Lambda_P$ to be a copy of $\Lambda$ endowed with the degree
map $d_P : \Lambda^P \to P$ given by $d_P = \iota \circ d$.
Then $\Lambda_P$ is a finitely aligned $P$-graph, and
$\Csmin{\Lambda_P} \cong \Csmin{\Lambda}$.
\end{cor}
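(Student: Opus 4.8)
The plan is to verify first that $(\Lambda_P, d_P)$ really is a finitely aligned $P$-graph, and then to obtain the isomorphism $\Csmin{\Lambda_P}\cong\Csmin{\Lambda}$ by playing the co-universal properties of the two algebras off against each other. For the $P$-graph claim: $\Lambda_P$ and $\Lambda$ have the same underlying countable category, and $d_P=\iota\circ d$. If $\lambda\in\Lambda_P$ and $p,q\in P$ satisfy $d_P(\lambda)=pq$, then $pq=d(\lambda)\in Q$, so $p,q\in Q$ because $Q$ is hereditary; the factorisation property for the $Q$-graph $\Lambda$ then supplies the required unique factorisation $\lambda=\mu\nu$ with $d_P(\mu)=p$ and $d_P(\nu)=q$. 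Next I claim $\MCE_P(\mu,\nu)=\MCE_Q(\mu,\nu)$ for all $\mu,\nu$: since the order on $H$ is inherited from $G$ and $Q$ is closed under taking least upper bounds in $P$, one checks that $d(\mu)$ and $d(\nu)$ have a common upper bound in $P$ if and only if they have one in $Q$, and that then $d(\mu)\vee_P d(\nu)=d(\mu)\vee_Q d(\nu)\in Q$; unwinding the definition of minimal common extension (and using $\Lambda_P^{\,p}=\Lambda^p$ for $p\in Q$) gives the claimed equality. Hence $\Lambda_P$ is finitely aligned, the finite exhaustive subsets of $v\Lambda_P$ and of $v\Lambda$ coincide, and — comparing Definition~\ref{repn of Lambda}, whose conditions (T1)--(T4) refer only to the category, $r$, $s$, and $\MCE$ — a map $t\colon\Lambda\to B$ is a representation of the $Q$-graph $\Lambda$ if and only if it is a representation of the $P$-graph $\Lambda_P$.

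For the isomorphism I will use the induced homomorphism $\iota_*\colon C^*(H)\to C^*(G)$ determined by $U^H_h\mapsto U^G_h$. This is an injective, unital, nondegenerate homomorphism (injectivity of $C^*(H)\hookrightarrow C^*(G)$ for a subgroup of a discrete group is standard, e.g.\ via induced representations), and it intertwines the comultiplications, $\delta_G\circ\iota_*=(\iota_*\otimes\iota_*)\circ\delta_H$. Consequently, if $\delta$ is a full coaction of $H$ on a $C^*$-algebra $A$, then $(\id_A\otimes\iota_*)\circ\delta$ is a full coaction of $G$ on $A$ (injectivity since minimal tensor products of injective maps are injective, the coaction identity from the intertwining formula, and nondegeneracy since $\iota_*$ is unital); and conversely, if $\gamma$ is a full coaction of $G$ on $A$ with $\gamma(A)\subseteq A\otimes\iota_*(C^*(H))$, then $(\id_A\otimes\iota_*^{-1})\circ\gamma$, with $\iota_*^{-1}$ the inverse of the isomorphism $C^*(H)\cong\iota_*(C^*(H))$, is a full coaction of $H$ on $A$.

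Now let $\Smin$ be the co-universal representation of $\Lambda$ and $\beta$ its coaction of $H$ (Theorem~\ref{thm:couniversal alg of Lambda}), so $\Csmin{\Lambda}=C^*(\Smin)$, and let $R$ be the analogous co-universal representation of $\Lambda_P$ with its coaction $\gamma$ of $G$, so $\Csmin{\Lambda_P}=C^*(R)$. Inflating $\beta$ gives a coaction $\alpha:=(\id\otimes\iota_*)\circ\beta$ of $G$ on $\Csmin{\Lambda}$ with $\alpha(\Smin_\lambda)=\Smin_\lambda\otimes U^G_{d(\lambda)}=\Smin_\lambda\otimes U^G_{d_P(\lambda)}$; since $\Smin$ is also a representation of $\Lambda_P$ by nonzero partial isometries, Theorem~\ref{thm:couniversal alg of Lambda}(\ref{it:Smin couniversal}), applied to $\Lambda_P$, yields a homomorphism $\psi\colon\Csmin{\Lambda}\to\Csmin{\Lambda_P}$ with $\psi(\Smin_\lambda)=R_\lambda$. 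In the other direction, $\gamma(R_\mu R^*_\nu)=R_\mu R^*_\nu\otimes U^G_{d(\mu)d(\nu)^{-1}}$ with $d(\mu)d(\nu)^{-1}\in H$ because $d(\mu),d(\nu)\in Q\le H$, so Remark~\ref{rmk:spanning} gives $\gamma(\Csmin{\Lambda_P})\subseteq\Csmin{\Lambda_P}\otimes\iota_*(C^*(H))$; corestricting gives a coaction $\beta':=(\id\otimes\iota_*^{-1})\circ\gamma$ of $H$ on $\Csmin{\Lambda_P}$ with $\beta'(R_\lambda)=R_\lambda\otimes U^H_{d(\lambda)}$, and Theorem~\ref{thm:couniversal alg of Lambda}(\ref{it:Smin couniversal}), applied to $\Lambda$, yields a homomorphism $\psi'\colon\Csmin{\Lambda_P}\to\Csmin{\Lambda}$ with $\psi'(R_\lambda)=\Smin_\lambda$. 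Then $\psi'\circ\psi$ and $\psi\circ\psi'$ fix the generators of $\Csmin{\Lambda}$ and of $\Csmin{\Lambda_P}$ respectively, hence are the identity maps, so $\psi$ is the desired isomorphism $\Csmin{\Lambda}\cong\Csmin{\Lambda_P}$.

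The category-theoretic identities ($\MCE_P=\MCE_Q$, the factorisation property) and the spanning-set manipulations are routine. The part demanding care — and the only genuine obstacle — is the coaction bookkeeping: confirming that inflation along $\iota_*$ and corestriction against a subgroup yield bona fide full coactions (the coaction identity and nondegeneracy), and, crucially, that $\gamma(\Csmin{\Lambda_P})$ really lands in $\Csmin{\Lambda_P}\otimes\iota_*(C^*(H))$. It is precisely here that the hypotheses are used: the order on $H$ agreeing with that on $G$ and $Q$ being closed under least upper bounds make $\MCE_P=\MCE_Q$ (so the Toeplitz-level data and the co-universal representations transfer verbatim), while $Q\le H$ ensures every relevant degree $d(\mu)$, $d(\nu)$, $d(\mu)d(\nu)^{-1}$ lies in $H$, which together with Remark~\ref{rmk:spanning} is what makes the corestriction possible.
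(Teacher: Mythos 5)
Your proof is correct and follows essentially the same route as the paper: verify the factorisation property via heredity of $Q$, identify representations of the $Q$-graph $\Lambda$ with representations of the $P$-graph $\Lambda_P$, and play the two co-universal properties off against each other using inflation of coactions along $C^*(H)\to C^*(G)$. You supply considerably more detail than the paper does --- in particular the corestriction of the $G$-coaction on $\Csmin{\Lambda_P}$ to an $H$-coaction (using that all degrees lie in $Q\le H$ together with Remark~\ref{rmk:spanning}), a step which the paper's one-line appeal to ``the co-universal properties of the two algebras'' silently requires.
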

\begin{proof}
To see that $\Lambda_P$ is a finitely aligned $P$-graph, the
only difficulty is checking the factorisation property, and
this follows from the assumption that $pq \in Q$ forces $p,q
\in Q$.

It is routine to verify that every representation of
$\Lambda_P$ is a representation of $\Lambda$ and vice versa.
Both $\Csmin{\Lambda}$ and $\Csmin{\Lambda_P}$ carry coactions
of $G$ satisfying $\Smin_\lambda \mapsto \Smin_\lambda \otimes
U_{d(\lambda)}$ for all $\lambda$ (the coaction of $G$ on
$\Csmin{\Lambda}$ is the inflation of the canonical coaction of
$H$). Theorem~\ref{thm:couniversal alg of Lambda} implies that
the co-universal representations of both algebras consist of
nonzero partial isometries, so the co-universal properties of
the two algebras yield mutually inverse homomorphisms between
them.
\end{proof}

\section{Examples}

\subsection{Spielberg's $C^*$-algebras of hybrid graphs}

For the following discussion, we need to recall the
cartesian-product graph introduced by Kumjian and Pask
\cite{KP2000}. Given a $k$-graph $\Lambda$ and an $l$-graph
$\Gamma$, the cartesian-product graph is the $(k+l)$-graph
which is equal to the cartesian product $\Lambda \times
\Gamma$, with pointwise operations and structure maps. By
\cite[Corollary~3.5(iv)]{KP2000}, $C^*(\Lambda \times \Gamma)
\cong C^*(\Lambda) \otimes C^*(\Gamma)$.

We recall the construction of a hybrid graph
\cite[Definition~2.1]{Spielberg2007}. A warning: as frequently
happens when treating constructions involving directed graphs
using ideas based on $k$-graphs, it is easiest to reverse the
directions of the edges from \cite{Spielberg2007}. Let $D$ be
the following directed graph:

\[
\begin{tikzpicture}[scale=2]
    \node[circle, inner sep=1.5pt] (u0) at (-1,0) {\small$u_0$};
    \node[circle, inner sep=1.5pt] (u1) at (1,0) {\small$u_1$};
    \node[circle, fill=black, inner sep=1.5pt] (a0) at (0,0.5) {};
    \node[circle, fill=black, inner sep=1.5pt] (a1) at (0,-0.5) {};
    \draw[-latex] (u0)--(a0);
    \draw[-latex] (a0)--(u1);
    \draw[-latex] (u1)--(a1);
    \draw[-latex] (a1)--(u0);
    \draw[-latex] (u0) .. controls (0,-0.3) .. (u1);
    \draw[-latex] (u1) .. controls (0,0.3) .. (u0);
    \draw[-latex] (a0) .. controls +(80:0.5cm) and +(40:0.5cm) .. (a0);%
    \draw[-latex] (a0) .. controls +(100:0.5cm) and +(140:0.5cm) .. (a0);%
    \draw[-latex] (a1) .. controls +(280:0.5cm) and +(320:0.5cm) .. (a1);%
    \draw[-latex] (a1) .. controls +(260:0.5cm) and +(220:0.5cm) .. (a1);%
\end{tikzpicture}
\]

Fix, for the rest of the section, irreducible directed graphs
$E_0, E_1, F_0$ and $F_1$ each containing at least one infinite
receiver. We fix infinite receivers $v_i\in E_i^0$ and $w_i\in
F_i^0$, and we attach the $2$-graphs $E_i\times F_i$ to $D$ by
identifying $u_i\in D^0$ with $(v_i,w_i)\in E_i^0\times F_i^0$.
We call the resulting object the \emph{hybrid graph}. The range
and source maps coming from $D$ and each $E_i \times F_i$
extend to range and source maps $r$ and $s$ on the hybrid
graph.

A {\em finite path} in the hybrid graph is a finite string
$\mu_1\dots\mu_k$, where
\begin{enumerate}
\item $\mu_j\in D^*\sqcup\big(\sqcup_{i=0,1}E_i^*\times F_i^*\big)$ for each $1\le j\le k$;
\item $s(\mu_j)=r(\mu_{j+1})$ for each $1\le j< k$; and
\item\label{it:alternating type} $\mu_j\in
    D^*\Longleftrightarrow \mu_{j+1}\in
    \sqcup_{i=0,1}E_i^*\times F_i^*$ for each $1\le j< k$.
\end{enumerate}
We say that paths $\mu,\nu \in
D^*\sqcup\big(\sqcup_{i=0,1}E_i^*\times F_i^*\big)$ are of
\emph{different type} if one of them belongs to $D^*$ and the other to $E_i^*\times F_i^*$.

The range and source maps extend naturally to finite paths:
$r(\mu_1 \dots \mu_k) := r(\mu_1)$ and $s(\mu_1 \dots \mu_k) :=
s(\mu_k)$. Let $l : D^* \to \NN$ be the length function, and
let $l : E^*_i \times F^*_i$ be the standard degree function on
the cartesian-product graph; that is, $l(\alpha,\beta) :=
(l(\alpha), l(\beta))$. Denote by $\Lambda$ the set of all
finite paths in the hybrid graph. Then $\Lambda$ is a category
under concatenation. Define $d : \Lambda \to \NN^2 * \NN$ by
defining $d(\mu_i\dots\mu_k)$ to be the word $l(\mu_1)\dots
l(\mu_k)$.

\begin{lemma}\label{lem:Spielberg is P-graph}
The pair $(\Lambda,d)$ described above is an $(\NN^2 *
\NN)$-graph. Moreover, given finite paths $\mu = \mu_1 \dots
\mu_m$ and $\nu = \nu_1 \dots \nu_n$ with $m \le n$, we have
\begin{equation}\label{eq:MCE for paths in hyb gp}
\MCE(\mu,\nu) =
    \begin{cases}
        \{\nu\} &\text{ if $n > m$, $\mu_i = \nu_i$ for $i < m$,}\\
        &\text{\quad and $\nu_m = \mu_m\nu_m'$}\\
        (\mu_1\dots\mu_{m-1})\MCE(\mu_m,\nu_m) &\text{ if $n = m$ and $\mu_i = \nu_i$ for $i < m$} \\
        \emptyset &\text{ otherwise.}
    \end{cases}
\end{equation}
In particular, $\Lambda$ is finitely aligned.
\end{lemma}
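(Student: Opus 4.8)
The plan is to analyse everything through \emph{reduced words} in the free product. Since $\NN^2\ast\NN$ sits inside the free product group $\ZZ^2\ast\ZZ$, every element of $\NN^2\ast\NN$ has a unique expression as an alternating product of nontrivial elements of $\NN^2$ and of $\NN$; and since no cancellation occurs between two \emph{positive} syllables lying in the same free factor, a product $g_0g_1$ in $\NN^2\ast\NN$ shows up in the reduced word of $g_0g_1$ either as a break between consecutive syllables or as a break of a single syllable $n=n'+n''$ within one factor. I would record this at the outset, together with the standard fact that $(\ZZ^2\ast\ZZ,\NN^2\ast\NN)$ is quasi-lattice ordered, so that the statement of the lemma is meaningful.

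Next I would dispatch the straightforward parts of the claim that $(\Lambda,d)$ is an $(\NN^2\ast\NN)$-graph. First, every finite path has a \emph{reduced form} $\mu_1\cdots\mu_k$ in which each $\mu_j$ is a non-vertex and the types strictly alternate (interior vertices would violate the alternating-type condition, and terminal ones can be deleted), a path of ``length $0$'' being a single vertex. Then $\Lambda$ is a countable category under concatenation with the vertices as identities, and $d(\mu_1\cdots\mu_k)=l(\mu_1)\cdots l(\mu_k)$ is genuinely a reduced word because consecutive syllables lie in different free factors. Functoriality is a two-case check: when composing $\mu=\mu_1\cdots\mu_m$ with $\nu=\nu_1\cdots\nu_n$, either $\mu_m$ and $\nu_1$ have different type, and $d(\mu\nu)=d(\mu)d(\nu)$ with no interaction, or they share a factor, $\mu\nu=\mu_1\cdots\mu_{m-1}(\mu_m\nu_1)\nu_2\cdots\nu_n$, and $l(\mu_m\nu_1)=l(\mu_m)+l(\nu_1)$ again recovers $d(\mu)d(\nu)$; the cases where $\mu$ or $\nu$ is a vertex are trivial.

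The substantial point is the \emph{factorisation property}, and I expect this to be the main obstacle. Given $\lambda=\lambda_1\cdots\lambda_k$ in reduced form and $p,q$ with $pq=d(\lambda)$, the observation above locates the break: there are a unique index $j\in\{1,\dots,k\}$ and a unique ``initial segment'' $p'$ of the syllable $l(\lambda_j)$ (so $p',q'$ lie in the factor containing $l(\lambda_j)$, with $p'+q'=l(\lambda_j)$, allowing $p'=e$ or $q'=e$ so as to cover a clean break between two syllables) such that $p=l(\lambda_1)\cdots l(\lambda_{j-1})p'$ and $q=q'l(\lambda_{j+1})\cdots l(\lambda_k)$. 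Applying the factorisation property of the relevant factor graph---$D^*$ viewed as a $1$-graph when $\lambda_j\in D^*$, or $E_i^*\times F_i^*$ viewed as a $2$-graph when $\lambda_j\in E_i^*\times F_i^*$---to $\lambda_j$ and the decomposition $l(\lambda_j)=p'+q'$ produces $\lambda_j=\lambda_j'\lambda_j''$ with $d(\lambda_j')=p'$ and $d(\lambda_j'')=q'$, and I take $\mu:=\lambda_1\cdots\lambda_{j-1}\lambda_j'$ and $\nu:=\lambda_j''\lambda_{j+1}\cdots\lambda_k$. Uniqueness follows because $d(\mu)=p$ forces $\mu$ to have exactly this shape (by uniqueness of reduced words), after which the uniqueness clause of the factorisation property of the factor graph pins down $\lambda_j'$. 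The fiddly points here are the vertex degeneracies and phrasing ``initial segment of a syllable'' so that it uniformly absorbs the between-syllables break.

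Finally I would deduce the $\MCE$ formula and finite alignment. The key dictionary, immediate from the factorisation property and uniqueness of reduced words, is: for $\mu=\mu_1\cdots\mu_m$ with $\mu_m$ a non-vertex, one has $\mu\preceq\lambda$ (with $\lambda=\lambda_1\cdots\lambda_k$ in reduced form) if and only if $k\ge m$, $\lambda_i=\mu_i$ for $i<m$, and $\mu_m\preceq\lambda_m$ in the factor graph. Feeding $\mu\preceq\lambda$ and $\nu\preceq\lambda$ into this for $\lambda\in\MCE(\mu,\nu)$ and $m\le n$ forces $\mu_i=\nu_i$ for all $i<m$, which settles the ``otherwise'' outcome when some $\mu_i\ne\nu_i$; when $n>m$ it forces $\lambda_m=\nu_m$ with $\mu_m\preceq\nu_m$, so either $\mu_m\not\preceq\nu_m$ and $\MCE(\mu,\nu)=\emptyset$, or $\mu\preceq\nu$, whence $d(\mu)\le d(\nu)$, $d(\mu)\vee d(\nu)=d(\nu)$, and $\MCE(\mu,\nu)=\{\nu\}$; when $n=m$ and $\mu_m,\nu_m$ have different type one checks $d(\mu)\vee d(\nu)=\infty$ (a common upper bound would need an $m$-th syllable dominating $l(\mu_m)$ in one factor and $l(\nu_m)$ in the other), while if $\mu_m,\nu_m$ share a factor then $d(\mu)\vee d(\nu)=l(\mu_1)\cdots l(\mu_{m-1})\bigl(l(\mu_m)\vee l(\nu_m)\bigr)$---a coordinatewise or ordinary maximum, hence finite---and the common extensions are exactly the paths $(\mu_1\cdots\mu_{m-1})\xi$ with $\xi\in\MCE(\mu_m,\nu_m)$. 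Finite alignment then follows at once: in every case $\MCE(\mu,\nu)$ is empty, a singleton, or a translate of $\MCE(\mu_m,\nu_m)$, and the latter is finite because $D$ is a $1$-graph and $E_i\times F_i$ is finitely aligned as a product of two $1$-graphs (a minimal common extension in the product has each coordinate equal to a minimal common extension in a $1$-graph, of which there is at most one).
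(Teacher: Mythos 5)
Your proposal is correct and follows essentially the same route as the paper: both establish the factorisation property by locating the unique break within a syllable of the reduced word for $d(\lambda)$ and invoking the factorisation property of the relevant factor graph ($D^*$ or $E_i^*\times F_i^*$), and both reduce $\MCE(\mu,\nu)$ to agreement of the first $m-1$ syllables plus an analysis of the $m$-th, yielding $\emptyset$, $\{\nu\}$, or a translate of $\MCE(\mu_m,\nu_m)$, whence finite alignment. Your explicit prefix-order dictionary and attention to vertex degeneracies are a slightly tidier packaging of the same argument.
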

\begin{proof}
We must first show that $(\Lambda,d)$ satisfies the
factorisation property. Indeed, suppose that $d(\lambda) = wx$
where $w,x \in \NN^2 * \NN$. Write $w = w_1 \dots w_m$ and $x =
x_1 \dots x_n$ where each $w_i,x_j \in \NN^2 \sqcup \NN$ and
$w_i \in \NN^2 \implies w_{i+1} \in \NN$ and similarly for $x$.
By definition of $d$, we have $\lambda = \mu_1 \dots \mu_{m-1}
\gamma \nu_2 \dots \nu_n$ where $d(\mu_i) = w_i$ and $d(\nu_i)
= x_i$ for all $i$, and $d(\gamma) = w_mx_1$. If $w_m \in
\NN^2$ and $x_1 \in \NN$ or vice-versa, then the definition of
$d$ again forces $\gamma = \mu_m \nu_1$ where $d(\mu_m) = w_m$
and $d(\nu_1) = x_1$. If, instead, we have both $w_m$ and $x_1$
in $\NN^2$ or both $w_m$ and $x_1$ in $\NN$, then the
factorisation property in either $\bigcup_{i=0,1} E^*_i \times
F^*_i$ or $D^*$ implies that we can factorise $\gamma$ uniquely
as $\gamma = \mu_m \nu_1$ where $d(\mu_m) = w_m$ and $d(\nu_1)
= x_1$. In particular, $\mu := \mu_1 \dots \mu_m$ and $\nu :=
\nu_1 \dots \nu_n$ are the unique paths such that $d(\mu) = w$,
$d(\nu) = x$, and $\lambda = \mu\nu$. Hence $(\Lambda,d)$
satisfies the factorisation property as claimed. In particular,
$(\Lambda,d)$ is a $(\NN^2*\NN)$-graph.

We must establish \eqref{eq:MCE for paths in hyb gp}. To see this, fix $\mu, \nu \in
\Lambda$. First
observe that if $d(\mu) \vee d(\nu) = \infty$, then
$\MCE(\mu,\nu)$ is empty, so we may assume that $d(\mu) \vee
d(\nu) < \infty$. Let $w := d(\mu)$ and $x = d(\nu)$, and write
$w = w_1 \dots w_m$ and $x = x_1 \dots x_n$ so that each $w_i,
x_j \in \NN^2 \sqcup \NN$, and $w_i \in \NN^2 \implies w_{i+1}
\in \NN$ and likewise for $x$. Write $\mu = \mu_1 \dots \mu_m$
and $\nu = \nu_1 \dots \nu_n$ with $d(\mu_i) = w_i$ and
$d(\nu_i) = x_i$. Without loss of generality, assume that $m
\le n$. By definition of the free product, since $w \vee x
\not= \infty$, we must have $w_i = x_i$ for $i < n$. By the
factorisation property, we have
\begin{equation}\label{eq:MCEs in spielberg}
\MCE(\mu,\nu) =
\begin{cases}
(\mu_1 \dots \mu_{m-1})\MCE(\mu_m, \nu_m\dots\nu_n) & \text{if $\mu_i=\nu_i$ for  $i<m$}\\
\emptyset & \text{otherwise}.
\end{cases}
\end{equation}
So suppose $\mu_i=\nu_i$ for $i<m$. We must consider two cases: either $m = n$ or $m < n$. First
suppose that $m = n$. Then~\eqref{eq:MCEs in spielberg} gives
$|\MCE(\mu,\nu)| = |\MCE(\mu_m, \nu_m)| < \infty$ because each
of $D^*$ and $\sqcup_{i=0,1}E_i^*\times F_i^*$ is finitely
aligned. Now suppose that $m < n$. For any $\alpha \in
\Lambda$, the element $d(\nu_m\dots\nu_n\alpha)$ has the form
$x_1 \dots x_n y_1 \dots y_l$ for some $y_i$, and it follows in
particular, that $d(\mu_m\beta) = d(\nu_m\dots\nu_n\alpha)$
forces $w_m = d(\mu_m) \le d(\nu_m) = x_m$. The
factorisation property in either $D^*$ or $\sqcup_{i=0,1}E_i^*\times F_i^*$ then implies that $\nu_m = \mu_m\nu_m'$. Hence $\nu$
is the unique element of $\MCE(\mu,\nu)$. This shows that
$\Lambda$ is finitely aligned as claimed.
\end{proof}

Recall from \cite[Definition~3.3]{Spielberg2007} that the
$C^*$-algebra $\Theta$ associated to the hybrid graph is the
universal $C^*$-algebra generated by mutually-orthogonal
projections $\{S_v : v \in \Lambda^0\}$ and partial isometries
$\big\{S_\nu : \nu \in D^1 \sqcup \big(\bigsqcup_{i=0,1} (E_i^1
\times F_i^0) \sqcup (E_i^0 \times F_i^1)\big)\big\}$
satisfying conditions (i)--(v) of
\cite[Definition~3.3]{Spielberg2007}:
\begin{itemize}
\item[(i)] Each $S_v$ is a projection and each $S_\nu$ is a
    partial isometry.
\item[(ii)] For each $v \in E^0_i$ the projections
    $\{S_{(v,w)} : w \in F^0_i\}$ and partial isometries
    $\{S_{(v,f)} : f \in F^1_i\}$ satisfy the Cuntz-Krieger
    relations for the graph $F_i$.
\item[(ii')] For each $w \in F^0_i$ the projections
    $\{S_{(v,w)} : v \in E^0_i\}$ and partial isometries
    $\{S_{(e,w)} : e \in E^1_i\}$ satisfy the Cuntz-Krieger
    relations for the graph $E_i$.
\item[(iii)] The projections $\{S_v : v \in D^0\}$ are
    mutually orthogonal, and the partial isometries $\{S_e
    : e \in D^1\}$ satisfy $S^*_e S_e = S_{s(e)}$ for all
    $e \in D^1$, and
    \[
    \sum_{r(e) = v} S_e S^*_e \le S_v\text{ for all $v \in D^0$, with equality if $v \not\in \{u_0, u_1\}$}.
    \]
\item[(iv)] if $e \in D^1$ and $f \in
    \big(\bigsqcup_{i=0,1} ((E_i^1 \times F_i^0) \sqcup (E_i^0
    \times F_i^1))\big)$, then $S^*_e S_f = 0$.
\item[(v)] For $e \in E^1_i$ and $f \in F^1_i$, we have
    \begin{align*}
    S_{(e,r(f))}S_{(f,s(e))} &= S_{(f,r(e))}S_{(e,s(f))},\text{ and}\\
    S^*_{(e,r(f))} S_{(r(e),f)} &= S_{(s(e),f)} S^*_{(e,s(f))}.
    \end{align*}
\end{itemize}

In what follows, we extend the generating family in Spielberg's
$C^*$-algebra $\Theta$ to a representation $S$ of the
associated $(\NN^2 * \NN)$-graph $\Lambda$. This should not be
confused with the co-universal representation of $\Lambda$ in
$\Csmin(\Lambda)$.

\begin{prop}\label{isom of algebras}
The $C^*$-algebra $\Theta$ defined above is isomorphic to
$\Csmin{\Lambda}$.
\end{prop}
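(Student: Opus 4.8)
The plan is to realise the isomorphism as the canonical surjection $\psi_S\colon\Theta\to\Csmin{\Lambda}$ furnished by the co-universal property of $\Csmin{\Lambda}$, and then to invoke Spielberg's theorem that $\Theta$ is simple to conclude that $\psi_S$ is injective.

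The first step is to promote Spielberg's generating family to a representation $S$ of the $(\NN^2\ast\NN)$-graph $\Lambda$. Relations~(ii), (ii') and~(v) say precisely that the families $\{S_{(v,w)}\}$, $\{S_{(v,f)}\}$ and $\{S_{(e,w)}\}$ assemble into a representation of each cartesian-product $2$-graph $E_i\times F_i$ (this is the standard construction of a $k$-graph representation from its generators, cf.\ \cite{RSY2004}), while~(iii) says that $\{S_v:v\in D^0\}$ and $\{S_e:e\in D^1\}$ assemble into a representation of the $1$-graph $D$. Concatenating these blocks along a hybrid path $\mu=\mu_1\cdots\mu_k$ then gives a well-defined element $S_\mu:=S_{\mu_1}\cdots S_{\mu_k}\in\Theta$, and one must check that $\mu\mapsto S_\mu$ satisfies (T1)--(T4) of Definition~\ref{repn of Lambda}: (T1) is relation~(i) together with the identification $u_i=(v_i,w_i)$; (T2) and~(T3) follow block by block from the corresponding properties of the constituent $1$- and $2$-graph representations; and~(T4) is the main computation, in which relation~(iv) forces $S_\mu^*S_\nu=0$ as soon as $\mu$ and $\nu$ first differ in blocks of different type, reducing~(T4) to the cases enumerated in the formula for $\MCE(\mu,\nu)$ in Lemma~\ref{lem:Spielberg is P-graph}, where (T4) for $D$ or for $E_i\times F_i$ applies. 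Since Spielberg's generators are among the $S_\lambda$ and each $S_\lambda$ is a product of generators, $C^*(S)=\Theta$; and since $S_\lambda^*S_\lambda=S_{s(\lambda)}$ and each $S_v$ is nonzero (e.g.\ by \cite{Spielberg2007}), every $S_\lambda$ is nonzero.

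The second step is to produce a coaction of $G=\ZZ^2\ast\ZZ$ on $\Theta$. Each relation~(i)--(v) is homogeneous for the grading assigning $S_\nu$ the degree $d(\nu)\in\NN^2\sqcup\NN\subset G$ and $S_v$ the identity --- relation~(iv) is homogeneous because it sets to zero an element whose putative degree lies in neither copy $\ZZ^2$ nor $\ZZ$ --- so the universal property of $\Theta$ gives a homomorphism $\alpha\colon\Theta\to\Theta\otimes C^*(G)$ with $\alpha(S_v)=S_v\otimes 1$ and $\alpha(S_\nu)=S_\nu\otimes U_{d(\nu)}$. Applying $\id_\Theta\otimes\varepsilon$, where $\varepsilon\colon C^*(G)\to\CC$ sends each $U_g$ to $1$, recovers the identity, so $\alpha$ is injective; nondegeneracy follows from $\alpha(S_v)=S_v\otimes 1$ and the fact that the $S_v$ form an approximate identity for $\Theta$; and the coaction identity is verified on generators. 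Since $d$ is multiplicative, $\alpha(S_\lambda)=S_\lambda\otimes U_{d(\lambda)}$ for all $\lambda\in\Lambda$.

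Finally, Theorem~\ref{thm:couniversal alg of Lambda}(\ref{it:Smin couniversal}), applied to the representation $S$ and the coaction $\alpha$, yields a homomorphism $\psi_S\colon\Theta=C^*(S)\to\Csmin{\Lambda}$ with $\psi_S(S_\lambda)=\Smin_\lambda$ for every $\lambda$. Since the $\Smin_\lambda$ generate $\Csmin{\Lambda}$ the map $\psi_S$ is surjective, and since $\Csmin{\Lambda}$ is nonzero while $\Theta$ is simple by \cite{Spielberg2007}, the kernel of $\psi_S$ is a proper ideal of $\Theta$ and hence is trivial; so $\psi_S$ is the required isomorphism. (Alternatively, one may deduce injectivity from the criterion in Theorem~\ref{thm:couniversal alg of Lambda}, using that simplicity of $\Theta$ makes $\alpha$ normal, since the normalisation of $(\Theta,\alpha)$ is a nonzero quotient of $\Theta$, and that Spielberg's relations force $\prod_{\alpha\in E}(S_\mu S_\mu^*-S_{\mu\alpha}S_{\mu\alpha}^*)=0$ for every $\mu$ and every finite exhaustive $E\subset s(\mu)\Lambda$; but the direct simplicity argument is shorter.) I expect the verification of~(T4) for $S$ to be the main obstacle: one must match relations~(iv) and~(v) carefully against the explicit description of $\MCE(\mu,\nu)$, treating separately the subcases where $\mu$ and $\nu$ have different lengths or equal lengths, and within the latter whether their terminal blocks are of the same type.
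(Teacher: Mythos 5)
Your proposal is correct and follows essentially the same route as the paper: extend Spielberg's generators to a representation $S$ of $\Lambda$ in $\Theta$ (with the case analysis for~(T4) driven by relation~(iv) and the description of $\MCE(\mu,\nu)$ in Lemma~\ref{lem:Spielberg is P-graph}), use the universal property of $\Theta$ to build the coaction of $\ZZ^2\ast\ZZ$, apply Theorem~\ref{thm:couniversal alg of Lambda} to obtain the surjection $\psi_S$, and conclude injectivity from Spielberg's simplicity theorem. The paper's proof is the same argument carried out in full detail.
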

\begin{proof}
A vertex in $\Lambda$ is an element $v\in
\Lambda^{(0,0)}\cup\Lambda^{0}$, and an edge in $\Lambda$ is an
element
$\nu\in\Lambda^{(1,0)}\cup\Lambda^{(0,1)}\cup\Lambda^{1}$.
Given $\mu \in D^*$, say $\mu = d_1d_2 \dots d_n$ with each
$d_i \in D^1$, we define $S_\mu = S_{d_1} \cdots S_{d_n}$. For
$(\mu,\nu) \in E_i^* \times F_i^*$, with $\mu = e_1 \dots e_m$
and $\nu = f_1 \dots f_n$ with each $e_j \in E^1_i$ and each
$f_j \in F^1_i$, define
\[
S_{(\mu,\nu)} := S_{(e_1, r(\nu))} \dots S_{(e_m,r(\nu))} S_{(s(\mu),f_1)} \dots S_{(s(\mu), f_n)}.
\]
Now for a finite path $\mu = \mu_1 \dots \mu_m$ in $\Lambda$,
define $S_\mu := S_{\mu_1} \dots S_{\mu_m}$.

We claim that $\mu \mapsto S_\mu$ determines a representation
of $\Lambda$ in $\Theta$. Condition~(T1) of
Definition~\ref{repn of Lambda} follows immediately from
(i)--(iii) of \cite[Definition~3.3]{Spielberg2007}. For the
other conditions, let $\lambda,\mu\in\Lambda$ with
$\mu=\mu_1\dots\mu_m$ and $\nu=\nu_1\dots\nu_n$. Suppose
$s(\mu)=r(\nu)$. So $s(\mu_m)=r(\nu_1)$. If $\mu_m$ and $\nu_1$
are of different type, then $S_{\nu}S_{\mu}=S_{\nu\mu}$ follows
immediately. If $\mu_m$ and $\nu_1$ are the same type, then the Cuntz-Krieger relations of $E_i\times F_i$ and
Toeplitz-Cuntz-Krieger relations of $D$ imply that
$S_{\lambda_j}S_{\mu_1}=S_{\nu_j\mu_1}$. We then have
\[
S_{\nu\mu}
    = S_{\nu_1}\dots S_{\nu_j}S_{\mu_1}\dots S_{\mu_k}
    = S_{\nu_1}\dots S_{\nu_j\mu_1}\dots S_{\mu_k}
    = S_{\nu_1\dots(\nu_j\mu_1)\dots\mu_k}
    = S_{\nu\mu},
\]
and so condition~(T2) is satisfied.

Fix a finite path $\mu = \mu_1 \dots \mu_m$ in $\Lambda$. The
Cuntz-Krieger relations of $E_i\times F_i$, and
Toeplitz-Cuntz-Krieger relations of $D$ imply that
$S_{\mu_j}^*S_{\mu_j}=S_{s(\mu_j)}$ for all $1\le j\le m$. An
inductive argument then gives $S_{\mu}^*S_{\mu}=S_{s(\mu)}$,
so~(T3) is satisfied.

The proof of~(T4) is tedious, so we set it aside as a
claim.

\smallskip\noindent\textbf{Claim.} The map $\mu \mapsto S_\mu$ satisfies~(T4).

To prove this claim, fix $\mu,\nu \in \Lambda$. We must show
that
\begin{equation}\label{eq:Speilberg T4}
S_\mu S^*_\mu S_\nu S^*_\nu = \sum_{\lambda \in \MCE(\mu,\nu)} S_\lambda S^*_\lambda.
\end{equation}
Write $\mu = \mu_1 \dots \mu_m$ and $\nu = \nu_1 \dots \nu_n$.
We may assume without loss of generality that $m \le n$. First
suppose that $\mu_1$ and $\nu_1$ are of different type. Then~(iv) ensures that
\[
    S_\mu S^*_\mu S_\nu S^*_\nu = S_\mu S^*_{\mu_2\dots\mu_m} S^*_{\mu_1} S_{\nu_1} S_{\nu_2 \dots \nu_n} S^*_\nu = 0.
\]
Lemma~\ref{lem:Spielberg is P-graph} implies that
$\MCE(\mu,\nu) = \emptyset$ also, so~(T4) is satisfied.

Now suppose that $\mu_1$ and $\nu_1$ have the same type.
Condition~(\ref{it:alternating type}) then implies that $\mu_i$
and $\nu_i$ have the same type for $i \le m$. If $l \le m$
satisfies $\mu_i = \nu_i$ for all $i < l$, then repeated
applications of~(T3) show that
\[
    S_\mu S^*_\mu S_\nu S^*_\nu = S_\mu S^*_{\mu_l\dots\mu_m} S_{\nu_l \dots \nu_n} S^*_\nu,
\]
and then conditions~(ii), (ii')~and~(iii) imply that
\begin{equation}\label{eq:reduced T4}
    S_\mu S^*_\mu S_\nu S^*_\nu
        = \sum_{\mu_l\alpha = \nu_l\beta \in \MCE(\mu_l,\nu_l)} S_\mu S^*_{\mu_m} \dots S^*_{\mu_{l+1}}
            S_\alpha S^*_\beta  S_{\nu_l}\dots S_{\nu_n} S^*_\nu.
\end{equation}
If $\mu_l \not= \nu_l$ for some $l < m$, then $\mu_l\alpha
= \nu_l\beta \in \MCE(\mu,\nu)$ forces at least one of
$d(\alpha),d(\beta) > 0$ and then~(iv) forces one of
$S^*_{\mu_{l+1}} S_\alpha$ and $S^*_\beta S_{\nu_{l+1}}$ to be
equal to zero. Hence $\mu_l \not= \nu_l$ for some $l < m$
forces $S_\mu S^*_\mu S_\nu S^*_\nu = 0$. Since
Lemma~\ref{lem:Spielberg is P-graph} implies that
$\MCE(\mu,\nu) = \emptyset$ unless $\mu_i = \nu_i$ for all $i <
m$ we have now established~\eqref{eq:Speilberg T4} whenever
$\mu_l \not= \nu_l$ for some $l < m$.

So we suppose that $\mu_l = \nu_l$ for all $l < m$, and
consider two cases: $m = n$ or $m < n$. Suppose first that $m =
n$. Then~\eqref{eq:reduced T4} reduces to
\begin{align*}
    S_\mu S^*_\mu S_\nu S^*_\nu
        &= \sum_{\mu_m\alpha = \nu_m\beta \in \MCE(\mu_m,\nu_m)} S_\mu S_\alpha S^*_\beta S^*_\nu \\
        &= \sum_{\mu_m\alpha = \nu_m\beta \in \MCE(\mu_m,\nu_m)} S_{\mu_1\dots\mu_{m-1}} S_{\mu_m\alpha} S^*_{\nu_m\beta} S^*_{\nu_1 \dots \nu_{m-1}}
\end{align*}
and since Lemma~\ref{lem:Spielberg is P-graph} gives
$\MCE(\mu,\nu) = \mu_1\dots\mu_{m-1}\MCE(\mu_m,\nu_m)$, this
establishes~\eqref{eq:Speilberg T4} in the case $m = n$. Now
suppose that $m < n$. Suppose that $\nu_m \not= \mu_m\nu'_m$.
Then $\MCE(\mu,\nu) = \emptyset$ by Lemma~\ref{lem:Spielberg is
P-graph}. Also, $d(\beta) > 0$, and since $\beta$ and
$\nu_{m+1}$ are of different type, condition~(iv) again gives
$S^*_\beta S_{\nu_{m+1}} = 0$. Hence $S_\mu S^*_\mu S_\nu
S^*_\nu = 0 = \sum_{\lambda \in \MCE(\mu,\nu)} S_\lambda
S^*_\lambda$, establishing~\eqref{eq:Speilberg T4} in the case
$m < n$ and $\nu_m \not= \mu_m\nu'_m$. Finally, suppose that
$\nu_m = \mu_m\nu'_m$. Then $\MCE(\mu,\nu) = \{\nu\}$ by
Lemma~\ref{lem:Spielberg is P-graph}, and $\MCE(\mu_m,\nu_m) =
\{\nu_m\}$. Hence~\eqref{eq:reduced T4} reduces to
\[
S_\mu S^*_\mu S_\nu S^*_\nu = S_\mu S_{\nu_m'} S^*_{s(\nu_m)} S_{\nu_{m+1} \dots \nu_n} S^*_\nu = S_\nu S^*_\nu.
\]
We have now established~\eqref{eq:Speilberg T4} in all possible
cases. This proves the claim.%

The claim completes the proof that $\mu \mapsto S_\mu$ is a
representation of $\Lambda$ in $\Theta$.

We denote by $e$ the identity of $\ZZ^2*\ZZ$. Straightforward
calculations show that $\{S_v\otimes 1_{C^*(G)}:
v\in\Lambda^{(0,0)}\cup\Lambda^0\}\cup\{S_\nu\otimes
U_{d(\nu)}:
\nu\in\Lambda^{(0,1)}\cup\Lambda^{(1,0)}\cup\Lambda^{1}\}$ is a
set of projections and partial isometries in $\Theta\otimes
C^*(\ZZ^2*\ZZ)$ satisfying conditions (i)--(v) of
\cite[Definition~3.3]{Spielberg2007}. The universal property of
$\Theta$ then gives a $*$-homomorphism $\alpha:\Theta\to
\Theta\otimes C^*(\ZZ^2*\ZZ)$ such that
$\alpha(S_{\mu})=S_{\mu}\otimes U_{d(\mu)}$.

We show that $\alpha$ is a coaction. It is straightforward to
check that $\alpha$ satisfies the coaction identity on
generators of $\Theta$. Since increasing finite sums $P_F :=
\sum_{v \in F} S_v$ where $F \subset \Lambda^{(0,0)} \cup
\Lambda^0$ form an approximate identity for $\Theta$ such that
$\alpha(P_F) = P_F \otimes 1$ for all $F$, the homomorphism
$\alpha$ is nondegenerate, and it follows (see
\cite[Remark~A.22(3)]{EKQR2006}) that it is a coaction.

Theorem~\ref{thm:couniversal alg of Lambda} now yields a
surjective homomorphism $\psi_S:\Theta\to \Csmin{\Lambda}$.
Corollary~3.19 of \cite{Spielberg2007} implies that $\Theta$ is
simple, and it follows that $\psi_S$ is an isomorphism.
\end{proof}

\begin{cor}
Let $A$ be a simple purely infinite nuclear $C^*$-algebra
belonging to the UCT class. Then there exists a finitely
aligned $(\NN^2 * \NN)$-graph $\Lambda$ such that $A$ is stably
isomorphic to $\Csmin{\Lambda}$.
\end{cor}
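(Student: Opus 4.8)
The plan is to assemble three ingredients already available to us: Spielberg's realisation of prescribed $K$-theory by hybrid graphs, the identification $\Theta \cong \Csmin{\Lambda}$ from Proposition~\ref{isom of algebras}, and the Kirchberg--Phillips classification theorem. Since $A$ lies in the UCT class, it is separable and nuclear; being also simple and purely infinite, $A$ is a Kirchberg algebra satisfying the UCT. By the Kirchberg--Phillips theorem \cite{Phillips2000}, two Kirchberg algebras in the UCT class are stably isomorphic precisely when their $K$-theories agree as $\ZZ/2$-graded abelian groups. So it suffices to exhibit a finitely aligned $(\NN^2 * \NN)$-graph $\Lambda$ for which $\Csmin{\Lambda}$ is a Kirchberg algebra in the UCT class with $K_*(\Csmin{\Lambda}) \cong (K_0(A), K_1(A))$.

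To produce such a $\Lambda$, set $G := K_0(A)$ and $H := K_1(A)$, which are countable abelian groups since $A$ is separable. By Spielberg's construction \cite{Spielberg2007, Spielberg2007b}, the irreducible directed graphs $E_0, E_1, F_0, F_1$ (each with an infinite receiver) may be chosen so that the associated hybrid graph has $C^*$-algebra $\Theta$ which is simple, purely infinite, nuclear, satisfies the UCT, and has $K_*(\Theta) \cong (G, H)$; separability of $\Theta$ is automatic, as $\Theta$ is generated by a countable family of projections and partial isometries. Let $\Lambda$ be the $(\NN^2 * \NN)$-graph associated to this hybrid graph. Then $\Lambda$ is finitely aligned by Lemma~\ref{lem:Spielberg is P-graph}, and Proposition~\ref{isom of algebras} gives an isomorphism $\Theta \cong \Csmin{\Lambda}$. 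Consequently $\Csmin{\Lambda}$ is a Kirchberg algebra in the UCT class with $K_*(\Csmin{\Lambda}) \cong (K_0(A), K_1(A))$.

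Now apply the Kirchberg--Phillips theorem to $A$ and $\Csmin{\Lambda}$: both are Kirchberg algebras in the UCT class with isomorphic $\ZZ/2$-graded $K$-theory, so $A \otimes \Kk \cong \Csmin{\Lambda} \otimes \Kk$ (with $\Kk$ the algebra of compact operators on a separable infinite-dimensional Hilbert space); that is, $A$ is stably isomorphic to $\Csmin{\Lambda}$, as required.

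The only points demanding attention are book-keeping ones: that Spielberg's $\Theta$ is nuclear and satisfies the UCT (both follow from his analysis, the hybrid graph algebra being assembled from ordinary graph and $2$-graph algebras, which are nuclear and lie in the bootstrap class), and that Spielberg's $K$-theory computation genuinely realises an arbitrary prescribed pair $(G, H)$ of countable abelian groups. I do not anticipate a real obstacle: the substantive work lies in Spielberg's $K$-theory realisation, in Proposition~\ref{isom of algebras}, and in Kirchberg--Phillips, and each of these is already in hand.
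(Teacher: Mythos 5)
Your proposal is correct and follows essentially the same route as the paper: realise the prescribed $K$-theory $(K_0(A),K_1(A))$ by a Spielberg hybrid graph algebra $\Theta$ via \cite[Theorem~2.2]{Spielberg2007b}, identify $\Theta\cong\Csmin{\Lambda}$ using Lemma~\ref{lem:Spielberg is P-graph} and Proposition~\ref{isom of algebras}, and conclude by Kirchberg--Phillips. The paper leaves the final classification step implicit, whereas you spell it out; there is no substantive difference.
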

\begin{proof}
Fix $K$-groups $K_0$ and $K_1$. The argument of
\cite[Theorem~2.2]{Spielberg2007b} (see also
\cite{Szymanski2002a}) shows that there are graphs $E_i, F_i$,
$i = 0,1$, each with a unique infinite receiver, such that the
$C^*$-algebra $\Theta$ of the associated hybrid graph is a
simple, purely infinite nuclear $C^*$-algebra in the UCT class
with $K_*(\Theta) = (K_0, K_1)$. The result therefore follows
from Lemma~\ref{lem:Spielberg is P-graph} and
Proposition~\ref{isom of algebras}.
\end{proof}

\subsection{Other examples}

We now consider two examples discussed in \cite{pp_SY2007} and
\cite{pp_CLSV2009}.

\begin{example}\label{eg:SY}
Consider the product system of \cite[Example~3.16]{pp_SY2007}.
That is, let $G = (\ZZ^2, +)$, let $S = \{0\} \times \NN
\subset \ZZ^2$, and let
\[
P = S \sqcup ((\NN \setminus \{0\}) \times \ZZ).
\]
Then $(G,P)$ is a quasi-lattice ordered group, and the order on
$G$ induced by $P$ is lexicographic order.

We define a $P$-graph $\Lambda$ as follows. As a set, $\Lambda
= \{f_s, g_s : s \in P\}$, and the degree map is given by
$d(f_s) = d(g_s) = s$. Define range and source maps by $r(f_s)
= s(f_s) = f_0$ for all $s \in P$; $r(g_s) = s(g_s) = g_0$ for
$s \in S$; and $r(g_s) = f_0$, $s(g_s) = g_0$ for $s \in P
\setminus S$. So for $s \in P$, the directed graph $(\Lambda^0,
\Lambda^s, r, s)$ has one of two forms:

\[
\begin{tikzpicture}
\node[circle,fill=black,inner sep=1.5pt] (f0) at (0,0) {};%
\node[inner sep = 1pt, anchor=north] at (f0.south) {\small$f_0$};%
\node[circle,fill=black,inner sep=1.5pt] (g0) at (2,0) {};%
\node[inner sep = 1pt, anchor=north] at (g0.south) {\small$g_0$};%
\draw[-latex] (f0) .. controls +(135:1.5) and +(45:1.5) .. (f0) node[pos=0.5, anchor=south,inner sep=1pt] {\small$f_s$};%
\draw[-latex] (g0) .. controls +(135:1.5) and +(45:1.5) .. (g0) node[pos=0.5, anchor=south,inner sep=1pt] {\small$g_s$};%
\node at (1,-1) {$s \in S$};
\end{tikzpicture}
\hskip4cm
\begin{tikzpicture}
\node[circle,fill=black,inner sep=1.5pt] (f0) at (0,0) {};%
\node[inner sep = 1pt, anchor=north] at (f0.south) {\small$f_0$};%
\node[circle,fill=black,inner sep=1.5pt] (g0) at (2,0) {};%
\node[inner sep = 1pt, anchor=north] at (g0.south) {\small$g_0$};%
\draw[-latex] (f0) .. controls +(135:1.5) and +(45:1.5) .. (f0) node[pos=0.5, anchor=south,inner sep=1pt] {\small$f_s$};%
\draw[-latex] (g0)--(f0) node[pos=0.5, anchor=south,inner sep=1pt] {\small$g_s$};%
\node at (1,-1) {$s \in P\setminus S$};
\end{tikzpicture}
\]

Define composition by
\begin{align*}
f_s f_t &= f_{s+t}, \\
g_s g_t &= g_{s+t} \quad\text{ for $t \in S$, and}\\
f_s g_t &= g_{s+t} \quad\text{ for $t \in P \setminus S$.}
\end{align*}
It is routine to check that this determines a composition map
which is defined on all composable pairs, that composition is
associative, and that it satisfies the factorisation property.
Hence $\Lambda$ is a $P$-graph. Since $v\Lambda^s$ is a
singleton for all $v \in \Lambda^0$ and $s \in P$, $\Lambda$ is
finitely aligned.

If $X$ is the product system over $P$ whose fibre over $s \in
P$ is the usual graph $C^*$-correspondence (see
\cite[Proposition~3.2]{RS2005}) then $X$ is isomorphic to the
product system described in \cite[Example~3.16]{pp_SY2007}.

Identifying representations of the product system $X$ with
representations of $\Lambda$ as in \cite[Theorem~4.2]{RS2005},
the discussion of \cite[Example~3.16]{pp_SY2007} shows that
every representation $t$ of $\Lambda$ corresponding to a
CNP-covariant representation of $X$ satisfies $t_{g_0} = 0$. In
particular, the universal generating representation of
$\Lambda$ in the algebra $\mathcal{NO}_X$ defined in
\cite{pp_SY2007} does not satisfy $t_v \not= 0$ for all $v \in
\Lambda^0$.

However, since the set $\{g_0, g_1, g_2, \dots\}$ is an
ultrafilter of $\Lambda$, the representation $\Sfilters$ of
Definition~\ref{dfn:Sfilters} satisfies $T_{g_0} \not= 0$ and
hence the generating representation of $\Lambda$ in
$\Csmin{\Lambda}$ consists of nonzero partial isometries. That
is, while the algebra $\mathcal{NO}_X$ does not satisfy
Criterion~(A) of \cite[Section~1.2]{pp_SY2007}, our
$\Csmin{\Lambda}$ does.
\end{example}

\begin{rmk}\label{rmk:SY}
A brief explanation is in order here. The bimodules
$\widetilde{X}_s$ of \cite{pp_SY2007} were intended to model
the sets $\Lambda^{\le s}$ of finite paths in a $k$-graph whose degree
is smaller than $s$ but which cannot be extended nontrivially
in direction $s$ (see \cite{RSY2003}). The Cuntz-Pimsner
covariance condition of \cite{pp_SY2007} was then intended to
model the Cuntz-Krieger relation of \cite{RSY2003}.

In a $P$-graph $\Lambda$ (as opposed to a $k$-graph), the
analogue of $\Lambda^{\le s}$ would be
\[
\Lambda^{\le s} = \{\mu \in \Lambda : d(\mu) \le s, d(\mu) < p \le s \implies s(\mu)\Lambda^{d(\mu)^{-1} p} = \emptyset\}.
\]
In the example above, we have $g_0 \Lambda^s = \{g_s\}$ if $s
\in S$, and $g_0 \Lambda^s = \emptyset$ if $s \in P\setminus
S$. Since $s \in S$ and $t \in P\setminus S$ implies $s \le t$,
it follows that $g_0\Lambda^{\le s} = \emptyset$ for $s \in P
\setminus S$. In particular, the Cuntz-Krieger relations of
\cite{RSY2003}, adapted to $P$-graphs, would force $t_{g_0} =
0$.

The point is that since for $t \in P \setminus S$, the set $\{s
\in P : s \le t\}$ is infinite, the above definition of
$\Lambda^{\le s}$ is inappropriate. Instead, Exel's insight,
when applied to this example, is that the set of ultrafilters
whose elements all have degree smaller than $s$ is the
appropriate analogue of $\Lambda^{\le s}$.
\end{rmk}

\begin{example}\label{eg:CLSV}
Let $\FF_2$ be the free group on two generators, $\FF_2 = \langle
a,b\rangle$, and let $\FF_2^+$ be subsemigroup generated by $a$ and
$b$. Let $\Lambda$ be the $\NN$-graph with $\Lambda^n =
\{e_n\}$ for all $n \in \NN$, and let $\Lambda_{\FF^+_2}$ be
the $\FF^+_2$-graph obtained from the first assertion of
Corollary~\ref{cor:subsemigroup} applied to the embedding of
$\NN$ in $\FF^+_2 = \langle a,b \rangle$ given by $1 \mapsto
a$.

The second assertion of Corollary~\ref{cor:subsemigroup} gives
$\Csmin{\Lambda_{\FF^+_2}} \cong \Csmin{\Lambda}$.
Lemma~\ref{lem:k-graphs} implies that $\Csmin{\Lambda} \cong
C^*(\Lambda)$. It is well-known (see, for example,
\cite[Example~2.14]{Raeburn2005}) that $C^*(\Lambda) \cong
C(\TT)$. By contrast, \cite[Example~3.9]{pp_CLSV2009} shows
that if $X$ is the product system over $\FF^+_2$ corresponding
to $\Lambda_{\FF^+_2}$, then $\mathcal{NO}_X \cong \mathcal{T}$
where $\mathcal{T}$ denotes the Toeplitz algebra. In particular, $\mathcal{NO}_X$ is not co-universal for gauge-compatible representations of $X$.
Moreover, passing to the normalisation $\mathcal{NO}_X^r$ as in
\cite[Section~4]{pp_CLSV2009} doesn't help because the coaction
of $\FF^+_2$ on $\mathcal{NO}_X$ is already normal. In particular, our
construction avoids the pathology arising in this example for
product systems (see \cite[Remark~4.2]{pp_CLSV2009}).
\end{example}

\end{document}